\newtheorem{theorem}{Theorem}[section]
\newtheorem{lemma}{Lemma}[section]
\newtheorem{corollary}{Corollary}[section]
\def\cal{\mathcal}
\let\Re=\undefined
\DeclareMathOperator{\Re}{Re}
\let\Im=\undefined
\DeclareMathOperator{\Im}{Im}
\def\ge{\geqslant}\def\le{\leqslant}
\def\~{\widetilde}
\numberwithin{equation}{section}
\begin{document}

\Large
\title[The growth of polynomials  orthogonal on the unit  circle with respect to a weight $w$ that satisfies $w,w^{-1}\in L^\infty(\mathbb{T}$
 ]{The growth of polynomials orthogonal on the unit circle with respect to a weight $w$ that satisfies $w,w^{-1}\in L^\infty(\mathbb{T})$ }
\author{ S. Denisov }
\address{
\begin{flushleft}
Sergey Denisov: denissov@wisc.edu\\\vspace{0.1cm}
University of Wisconsin--Madison\\  Mathematics Department\\
480 Lincoln Dr., Madison, WI, 53706,
USA\vspace{0.1cm}\\and\\\vspace{0.1cm}
Keldysh Institute for Applied Mathematics, Russian Academy of Sciences\\
Miusskaya pl. 4, 125047 Moscow, RUSSIA\\
\end{flushleft}
}\maketitle

\begin{abstract}
We consider the polynomials $\{\phi_n(z,w)\}$ orthogonal on the circle with respect to a weight $w$ that satisfies $w,w^{-1}\in L^\infty(\mathbb{T})$ and show that $\|\phi_n(e^{i\theta},w)\|_{L^\infty(\mathbb{T})}$ can grow in $n$ at a certain rate.

\end{abstract}\vspace{1cm}
\section{Introduction}

Consider a finite positive measure $\sigma$ defined on the unit
circle and assume that it has infinitely many growth points. Let the polynomials $%
\{\phi_k(z,\sigma)\}$ be orthonormal with respect to measure
$d\sigma$, i.e.,
\begin{equation}\label{1}
\int_{-\pi}^{\pi}{\phi_n(e^{i\theta})}\,\overline{\phi_m(e^{i\theta})}\,d\sigma\,=\delta_{n,m}\;,\quad
n,m=0,1,2\,\ldots, \quad {\rm coeff}(\phi_k,k)>0\,,
\end{equation}
where ${\rm coeff}(Q,k)$ denotes the coefficient in front of $z^k$
in the polynomial $Q$.   Besides the orthonormal polynomials, we can
define the monic orthogonal ones $\{\Phi_n(z,\sigma)\}$ by requiring
\[
\deg \Phi_n=n, \,{\rm
coeff}(\Phi_n,n)=1,\,\int_{-\pi}^{\pi}\Phi_n(e^{i\theta},\sigma)\overline{\Phi_m(e^{i\theta},\sigma)}\,d\sigma\,=0\;,\quad
m<n\,.
\]
Later, we will need to use the following notation. Let $\mathbb{P}_n$ denote the space of polynomials of degree at most $n$, i.e., 
\[
Q\in \mathbb{P}_n \,\Leftrightarrow \,\, Q(z)=q_nz^n+\ldots+q_1z+q_0, \,\, q_j\in \mathbb{C}, j=0,\ldots,n.
\]
For each $n\in \mathbb{Z}^+$, we introduce the operation $(\ast)$ defined on $\mathbb{P}_n$ as follows (see, e.g., \cite{sim1}, formula (1.1.6)):
\[
Q(z)\stackrel{(\ast)}{\longrightarrow} Q^*(z):=\bar{q}_0
z^n+\ldots+ \bar{q}_n\,.
\]
This $(\ast)$ depends on $n$. Since $Q^*(z)=z^n\overline{Q(\overline{z}^{-1})}$, we can make a trivial observation that 
\begin{equation}\label{form12}
Q^*(z)=z^n\overline{Q(z)}, \,|Q(z)|=|Q^*(z)|\,,
\end{equation}
if $z\in \mathbb{T}$. Moreover, $(Q^*)^*=Q$.

One of the central questions in the theory of orthogonal polynomials is
to understand the behavior of $\{\phi_n(z,\sigma)\}$ as
$n\to\infty,z\in \mathbb{T}$ given some regularity assumption on
$\sigma$.  Consider the following class of measure.

 {\it
Given $\delta\in (0,1)$, we say that $\sigma \in S_\delta$ if
$\sigma$ is a probability measure and
\begin{equation}
\sigma'\ge \delta/(2\pi), \quad {\rm a.e. \,\,\theta\in [-\pi,\pi]}\,.
\end{equation}
} We will call $S_\delta$ the Steklov class. One version of the
celebrated Steklov conjecture \cite{1} is to decide whether the sequence
$\{\phi_n(z,\sigma)\}$ is bounded in $n$ for every $z\in \mathbb{T}$
provided that $\sigma\in S_\delta$. This conjecture was solved
negatively by Rakhmanov in 1979 \cite{rakh1} (see
\cite{5,Ger1,Ger2,Ger3,Gol,2}). This development lead to more
general problem (dubbed ``problem by Steklov") which asks to obtain
the bounds on $\|\phi_n(z,\sigma)\|_{L^\infty(\mathbb{T})}$ that
would be sharp when $n\to\infty$ and $\sigma\in S_\delta$. Some
upper estimates are usually  not very difficult to come by. For
example, one has

\begin{lemma}
If $\sigma\in S_\delta$, then
\begin{equation}\label{steklo2}
\|\phi_n(z,\sigma)\|_{L^\infty(\mathbb{T})}=o(\sqrt n), \quad
n\to\infty\,.
\end{equation}
\end{lemma}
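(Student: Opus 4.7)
The plan is to couple a standard Christoffel (variational) upper bound on the kernel $K_n(z,z)=\sum_{k=0}^{n}|\phi_k(z,\sigma)|^2$ with a slow-variation estimate for $|\phi_n|$ on $\mathbb{T}$ along blocks of consecutive indices, and then to run an amplification argument that contradicts the Christoffel bound if $\|\phi_n\|_\infty^2/n$ fails to tend to zero. First, because $d\sigma\ge(\delta/2\pi)\,d\theta$, any $Q\in\mathbb{P}_n$ with $Q(z_0)=1$ satisfies $\int|Q|^2\,d\sigma\ge(\delta/2\pi)\int|Q|^2\,d\theta\ge\delta/(n+1)$, the last inequality being the Dirichlet-kernel extremal problem for the normalized Lebesgue measure. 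Dualizing, $K_n(z_0,z_0)\le(n+1)/\delta$ for every $z_0\in\mathbb{T}$, which already yields the trivial bound $\|\phi_n\|_\infty=O(\sqrt n)$.

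To upgrade this to $o(\sqrt n)$ I would use slow variation. Since $\sigma'\ge\delta/(2\pi)$ forces $\log\sigma'\in L^1(\mathbb{T})$, Szeg\H{o}'s theorem gives $\sum_n|\alpha_n|^2<\infty$ for the Verblunsky coefficients. The Szeg\H{o} recurrence $\sqrt{1-|\alpha_n|^2}\,\phi_{n+1}(z)=z\phi_n(z)-\bar\alpha_n\phi_n^*(z)$ together with $|\phi_n^*|=|\phi_n|$ on $\mathbb{T}$ (see \eqref{form12}) and the triangle inequality give
\[
\sqrt{\frac{1-|\alpha_n|}{1+|\alpha_n|}}\,|\phi_n(z,\sigma)|\,\le\,|\phi_{n+1}(z,\sigma)|\,\le\,\sqrt{\frac{1+|\alpha_n|}{1-|\alpha_n|}}\,|\phi_n(z,\sigma)|,\qquad z\in\mathbb{T},
\]
which iterates to $|\phi_m(z,\sigma)|^2\ge|\phi_N(z,\sigma)|^2\exp\!\bigl(-3\sum_{n=N}^{m-1}|\alpha_n|\bigr)$ once the $|\alpha_n|$ in the range are small.

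For the contradiction, I suppose $\|\phi_{n_k}\|_\infty^2\ge\varepsilon n_k$ along some $n_k\to\infty$ and pick $z_{n_k}\in\mathbb{T}$ realizing the maximum. Fixing an integer $K>4/(\varepsilon\delta)-1$, Cauchy-Schwarz gives $\sum_{n=N}^{N+K-1}|\alpha_n|\le\sqrt{K\sum_{n\ge N}|\alpha_n|^2}<(\log 2)/3$ for $N$ large, so the previous estimate yields $|\phi_m(z_{n_k},\sigma)|^2\ge(\varepsilon/2)n_k$ for every $m\in[n_k,n_k+K]$. Summing over this block,
\[
K_{n_k+K}(z_{n_k},z_{n_k})\,\ge\,(K+1)\,\frac{\varepsilon}{2}\,n_k\,>\,\frac{2n_k}{\delta},
\]
which, for $n_k$ large, contradicts the first-step bound $K_{n_k+K}(z_{n_k},z_{n_k})\le(n_k+K+1)/\delta\le 2n_k/\delta$.

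The main obstacle is that the Christoffel bound by itself only gives $O(\sqrt n)$, and producing the extra $o(1)$ decay requires an additional structural input. The cleanest one I see is the $\ell^2$-summability of $\{\alpha_n\}$ from Szeg\H{o}'s theorem combined with the two-sided quotient estimate for $|\phi_{n+1}|/|\phi_n|$ on $\mathbb{T}$; once these are packaged, the block amplification against the Christoffel bound is mechanical.
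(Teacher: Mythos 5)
Your argument is correct and complete. Note that the paper does not actually prove this lemma --- it only cites M\'at\'e--Nevai--Totik (\cite{mnt}, Theorem 4) --- so your write-up is a genuinely self-contained alternative, and the two ingredients you combine are exactly the right ones. The Christoffel-function bound $K_n(z_0,z_0)=\sum_{k=0}^{n}|\phi_k(z_0,\sigma)|^2\le (n+1)/\delta$ follows, as you say, from $d\sigma\ge(\delta/2\pi)\,d\theta$ together with the Dirichlet-kernel extremal problem, and by itself it only yields $O(\sqrt n)$. The upgrade to $o(\sqrt n)$ comes from the slow variation of $|\phi_n|$ on $\mathbb{T}$ along blocks of indices, which you correctly extract from the recurrence \eqref{srecurs} combined with $|\phi_n^*|=|\phi_n|$ on $\mathbb{T}$ (see \eqref{form12}) and the $\ell^2$-summability of the recurrence coefficients; the latter is legitimate here because $\sigma'\ge\delta/(2\pi)$ together with $\sigma(\mathbb{T})=1$ forces $\log\sigma'\in L^1(\mathbb{T})$, and Szeg\H{o}--Verblunsky applies even when $\sigma$ has a singular part. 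Your bookkeeping checks out: with $K+1>4/(\varepsilon\delta)$ and $n_k$ large enough that the tail of $\sum|\gamma_n|^2$ makes the block product at least $1/2$, the $K+1$ consecutive kernel terms at the maximizing point each exceed $\varepsilon n_k/2$, which overshoots $(n_k+K+1)/\delta$. This block-amplification mechanism is essentially the one underlying the cited results of \cite{mnt} and \cite{nt}; what your version buys over the paper's citation is a short elementary proof that also makes the rate of the $o(\sqrt n)$ explicit in terms of the decay of the tails $\sum_{n\ge N}|\gamma_n|^2$.
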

This result follows from, e.g., \cite{mnt}, Theorem 4 (see also
\cite{nt},\cite{rw} for the real-line case).

If $n$ is fixed, then the following Lemma is true.

\begin{lemma}{\rm (\cite{adt})}\,
Define $M_{n,\delta}$ by
\[
M_{n,\delta}:=\sup_{\sigma\in S_\delta}
\|\phi_n(z,\sigma)\|_{L^\infty(\mathbb{T})}\,.
\]
Then, we have
\[
M_{n,\delta}\le \min \left\{ \sqrt{\frac{n+1}{\delta}},
\frac{1}{\sqrt\delta}\left(1+\sqrt{\frac{n(1-\delta)}{\delta}}\right)
\right\}\,.
\]
\end{lemma}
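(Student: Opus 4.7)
\medskip
\noindent\textit{Proof plan.}
My approach is to work directly with the monomial expansion
$\phi_n(z,\sigma)=\sum_{k=0}^n c_k z^k$ and to control
$|\phi_n(z_0,\sigma)|$ for $z_0\in\mathbb{T}$ by the crude triangle inequality
$|\phi_n(z_0,\sigma)|\le\sum_{k=0}^n|c_k|$, which is available because $|z_0|=1$.
The two bounds in the statement will arise from summing $\sum|c_k|$ in two different ways, both relying on only two ingredients: a Parseval-type $\ell^2$-bound on the coefficients, and a two-sided control on the leading coefficient $\kappa_n:=c_n={\rm coeff}(\phi_n,n)$.

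For the first ingredient, since $d\sigma\ge(\delta/2\pi)\,d\theta$ and $\int|\phi_n|^2\,d\sigma=1$, Parseval gives
\[
\sum_{k=0}^n|c_k|^2=\int_{-\pi}^{\pi}|\phi_n|^2\,\frac{d\theta}{2\pi}\le\frac{1}{\delta}\int|\phi_n|^2\,d\sigma=\frac{1}{\delta}.
\]
For the second ingredient I exploit the extremal characterization $\kappa_n^{-2}=\int|\Phi_n|^2\,d\sigma$: testing the extremal problem against the monic polynomial $z^n$ yields $\kappa_n^{-2}\le\int|z^n|^2\,d\sigma=1$, hence $\kappa_n\ge 1$; on the other hand, since $\Phi_n$ is monic, $\int|\Phi_n|^2\,d\theta/(2\pi)\ge 1$, hence $\kappa_n^{-2}\ge\delta$ and $\kappa_n\le 1/\sqrt\delta$.

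The first bound now drops out by Cauchy--Schwarz applied to all $n+1$ coefficients simultaneously,
\[
|\phi_n(z_0,\sigma)|^2\le(n+1)\sum_{k=0}^n|c_k|^2\le\frac{n+1}{\delta}.
\]
For the second bound I peel off the leading term, $|\phi_n(z_0,\sigma)|\le\kappa_n+\sum_{k=0}^{n-1}|c_k|$, and apply Cauchy--Schwarz to the $n$ remaining coefficients. Using $\sum_{k<n}|c_k|^2\le 1/\delta-\kappa_n^2\le(1-\delta)/\delta$ (via $\kappa_n\ge 1$) together with $\kappa_n\le 1/\sqrt\delta$, I obtain
\[
|\phi_n(z_0,\sigma)|\le\frac{1}{\sqrt\delta}+\sqrt{\frac{n(1-\delta)}{\delta}},
\]
which is at most the stated bound because $1\le 1/\sqrt\delta$.

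I do not anticipate a real obstacle: the argument is entirely elementary, resting on the extremal characterizations of $\phi_n$ and $\Phi_n$ combined with Cauchy--Schwarz. The only observation of substance is that the Steklov hypothesis $\sigma'\ge\delta/(2\pi)$ simultaneously supplies the Fourier $\ell^2$-bound on $\phi_n$ and the upper bound on its leading coefficient, and it is exactly the lower bound $\kappa_n\ge 1$ that lets the sharper second estimate save one term (apply Cauchy--Schwarz to $n$ coefficients rather than $n+1$), which is the source of its gain over the first.
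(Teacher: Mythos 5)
Your proof is correct, and since the paper states this lemma without proof (citing \cite{adt}), there is nothing internal to compare it against; your argument is in fact the standard one behind this estimate. Both ingredients check out: Parseval together with $d\theta/(2\pi)\le\delta^{-1}d\sigma$ gives $\sum_k|c_k|^2\le 1/\delta$, the extremal characterization gives $1\le\kappa_n\le\delta^{-1/2}$, and the two applications of Cauchy--Schwarz (to $n+1$ coefficients, respectively to the $n$ lower-order ones after peeling off $\kappa_n$) yield the two stated bounds, the second in the slightly stronger form $\delta^{-1/2}+\sqrt{n(1-\delta)/\delta}$.
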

The question then naturally arises whether these results are optimal
when $\delta$ is fixed and $n\to\infty$. In his second
paper~\cite{rakh2}, Rakhmanov obtained the estimates that were
nearly sharp. However, the full solution to the problem has been
given quite recently and by a different method.
\begin{theorem}{\rm (\cite{adt})}\label{T3-i}         If $\delta\in (0,1)$ is fixed, then
\begin{equation}\label{osnova-i}M_{n,\delta} > C_{(\delta)}\sqrt n\,\, .
\end{equation}
\end{theorem}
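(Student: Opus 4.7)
The plan is to establish this matching lower bound by an explicit construction: for each sufficiently large $n$, I will exhibit a measure $\sigma_n\in S_\delta$ together with a point $\zeta_n\in\mathbb{T}$ at which $|\phi_n(\zeta_n,\sigma_n)|\ge C_\delta\sqrt n$. Since both the class $S_\delta$ and the quantity $\|\phi_n\|_{L^\infty(\mathbb{T})}$ are rotation-invariant, it will suffice to take $\zeta_n=1$.

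A convenient way to build $\sigma_n$ is via its Verblunsky coefficients $\{\alpha_k\}_{k\ge 0}\subset \mathbb{D}$, which parametrize all nontrivial probability measures on $\mathbb{T}$. The polynomials are determined by the Szeg\H{o} recursion
\[
\binom{\Phi_{k+1}(z)}{\Phi_{k+1}^*(z)} = A(z,\alpha_k)\binom{\Phi_k(z)}{\Phi_k^*(z)},\qquad A(z,\alpha)=\begin{pmatrix} z & -\bar\alpha\\ -\alpha z & 1\end{pmatrix},
\]
started from $\Phi_0=\Phi_0^*=1$, with $\phi_n=\Phi_n\big/\prod_{k<n}(1-|\alpha_k|^2)^{1/2}$. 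The task is to choose $\{\alpha_k\}$ so that the iterated transfer-matrix product applied to $(1,1)^T$ has norm of order $\sqrt n$ at $z=1$ after orthonormalization, while the resulting measure still belongs to $S_\delta$.

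The main obstacle is that the condition $\sigma_n'\ge \delta/(2\pi)$ is quite rigid. If one takes $\alpha_k=0$ for $k\ge n$, the measure is Bernstein--Szeg\H{o} with density proportional to $|\phi_n^*(e^{i\theta})|^{-2}$, and by \eqref{form12} a uniform lower bound on the density is equivalent to $\|\phi_n\|_{L^\infty(\mathbb{T})}\le C_\delta$, so such a truncated ansatz cannot produce any growth. The way forward is to continue the Verblunsky coefficients past index $n$ up to some larger $N$ and design the tail $\{\alpha_k\}_{n\le k<N}$ so that the peak of $|\phi_n|$ at $\theta=0$ is smoothed out into a uniformly bounded $|\phi_N^*|$ on the circle. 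The hard part will be the corresponding sharp asymptotic analysis: one must simultaneously exhibit a $\sqrt n$ peak of $|\phi_n(1)|$ at the intermediate index and a uniform upper bound of order $1/\sqrt\delta$ on $|\phi_N^*(e^{i\theta})|$ over the full circle, so that $\sigma_n'\ge \delta/(2\pi)$ a.e.\ and $\sigma_n\in S_\delta$.
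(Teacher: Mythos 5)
Your plan correctly identifies the right framework --- it is in fact the same one the paper uses: one extends the Verblunsky (Schur) sequence beyond index $n$ and uses the tail to flatten the density, and your observation that the Bernstein--Szeg\H{o} truncation $\alpha_k=0$ for $k\ge n$ cannot work is exactly the right diagnosis of the obstruction. However, what you have written is a statement of strategy, not a proof: you explicitly defer ``the hard part,'' and that hard part is the entire content of the theorem. Two essential ingredients are missing. First, you never specify the first block of Verblunsky coefficients, i.e.\ the polynomial $\phi_n$ itself; the construction in \cite{adt} (sketched in Appendix~A and executed in Section~3 of this paper in a parallel setting) takes a very particular ansatz of the form $\phi_n^*=\alpha_n(q_n+q_n^*+q_nh_n)$ built from auxiliary Fej\'er-type convolutions, and it is this algebraic structure --- not a generic transfer-matrix product of norm $\sqrt n$ --- that makes the later density estimates tractable. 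Second, and more importantly, you never explain how to control the density of the extended measure. The mechanism that makes the method work is Lemma~\ref{decop}: if the tail of Schur coefficients is taken to be that of the measure $\Re\widetilde F\,d\theta/(2\pi)$ for a Caratheodory function $\widetilde F$, then the resulting weight is given by the closed formula \eqref{mp},
\[
\sigma'=\frac{2\Re\widetilde F}{\pi\,|\phi_n+\phi_n^*+\widetilde F(\phi_n^*-\phi_n)|^2}\,,
\]
and the ansatz for $\phi_n^*$ is designed jointly with the choice $\widetilde F\sim 2/h_n$ precisely so that the denominator is comparable to a constant while $|\phi_n(1)|\gtrsim\sqrt n$. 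Without this formula (or an equivalent device), ``design the tail so that the peak is smoothed out'' is not an argument: there is no a priori reason one can keep $|\phi_N^*|\le \delta^{-1/2}$ uniformly on $\mathbb{T}$ while $|\phi_n(1)|\gtrsim\sqrt n$, and indeed establishing that this tension can be resolved at the $\sqrt n$ scale is exactly the theorem. As it stands, the proposal reduces the claim to an unproved claim of the same difficulty.
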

\smallskip
\begin{theorem}{\rm (\cite{adt})}\label{rrra-i}            Let $\delta{\in}(0,1)$ be fixed. Then, for every positive sequence
$\{\beta_n\}$ that satisfies $\lim_{n\to\infty}\beta_n=0$, there is an absolutely continuous probability
measure $\sigma^*:d\sigma^*={\sigma^*}'d\theta
$ such that $ \sigma^*{\in}S_\delta$ and
\begin{equation}
\label{est-ra-i}
\|\phi_{k_n}(z,\sigma^*)\|_{L^\infty(\mathbb T)}\ge
\beta_{k_n}\sqrt{k_n}
\end{equation}
for some sequence $\{k_n\}\subset\mathbb N$.\end{theorem}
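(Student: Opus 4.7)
The plan is to produce $\sigma^*$ as a superposition of Lebesgue measure with a thin series of small ``Rakhmanov-type'' bumps placed at widely separated scales. The model is Theorem \ref{T3-i}: at each scale $k_n$, a suitably chosen bump contributes $\sqrt{k_n}$-scale growth to the orthonormal polynomial of degree $k_n$, and by arranging the bumps to interact weakly one can force this growth to persist in a single measure $\sigma^*$ along a whole subsequence. The decay of $\beta_n$ plays the role of the ``slack'' that pays for the unavoidable cross-interaction between scales.

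Concretely, I would first extract a quantitative, localized version of Theorem \ref{T3-i}: for every $\delta'\in(\delta,1)$ and every $N\in\mathbb{N}$, a non-negative bump $v_N$ supported on an arc $I_N\subset\mathbb{T}$ so that the probability measure with density $(\delta'+v_N)/(2\pi)$ lies in $S_{\delta'}$ and its orthonormal polynomial $\phi_N$ satisfies $\|\phi_N\|_\infty\ge c_{\delta'}\sqrt{N}$; by rotational symmetry $I_N$ can be placed on any prescribed arc. I would also extract from the construction useful pointwise control on $\phi_N$ outside (a slight enlargement of) $I_N$. Then I would pick a very lacunary $\{k_n\}$, together with rotations $\theta_n$ making the arcs $I_{k_n}+\theta_n$ pairwise disjoint, and define
\[
d\sigma^*(\theta)=\frac{1}{2\pi}\Bigl(\delta+\sum_{n=1}^\infty \eta_n\,v_{k_n}(\theta-\theta_n)\Bigr)d\theta,
\]
with $\eta_n>0$ chosen so that $\int d\sigma^*=1$. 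Positivity of each $v_{k_n}$ gives $\sigma^*\in S_\delta$ at once.

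To read off the growth at $n=k_m$, I would use the variational formula
\[
\|\phi_{k_m}(\cdot,\sigma^*)\|_{L^\infty(\mathbb T)}\ge \frac{|P_m(\zeta_0)|}{\|P_m\|_{L^2(\sigma^*)}},
\]
with $P_m$ the extremal polynomial produced by the single-bump construction at scale $k_m$ (after rotation by $\theta_m$), and $\zeta_0\in\mathbb T$ where $|P_m|$ is large. The numerator is $\ge c_{\delta'}\sqrt{k_m}\,\|P_m\|_{L^2(w_{k_m})}$. The denominator $\|P_m\|_{L^2(\sigma^*)}^2$ splits into a contribution comparable to $\|P_m\|_{L^2(w_{k_m})}^2$ (from the baseline $\delta$ and the own-scale bump $v_{k_m}$) plus cross terms $\eta_j\int|P_m|^2 v_{k_j}(\theta-\theta_j)\,d\theta$ for $j\ne m$.

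The main obstacle is controlling these cross terms: a polynomial of degree $k_m$ is not spatially localized, so it is generally nonzero on every arc $I_{k_j}+\theta_j$. To handle this one must use the $L^\infty$ tail bound on $P_m$ outside its own arc together with the disjointness of the arcs and the smallness of $\eta_j$ for $j\ne m$, so that the cross-term sum is dominated by the own-scale contribution. The freedom offered by $\beta_{k_n}\to 0$ lets us absorb a small slack factor per scale: by taking $\{k_n\}$ sufficiently lacunary and $\{\eta_n\}$ decaying sufficiently fast, one keeps the denominator $\lesssim \beta_{k_m}^{-2}\|P_m\|_{L^2(w_{k_m})}^2$, which yields $\|\phi_{k_m}(\cdot,\sigma^*)\|_\infty\gtrsim \beta_{k_m}\sqrt{k_m}$ for all large $m$. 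The delicate part is thus the simultaneous tuning of $\{k_n\}$, $\{\theta_n\}$, and $\{\eta_n\}$ using the quantitative $L^\infty$ output of Theorem \ref{T3-i}'s construction.
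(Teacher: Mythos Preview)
The paper does not prove Theorem~\ref{rrra-i} itself---it is quoted from \cite{adt}---but it does prove a closely parallel statement (the theorem at the end of Section~3) by the same mechanism used in \cite{adt}. Your global architecture matches that mechanism: take disjoint arcs, place on each a rotated copy of the single-scale extremal weight, and read off growth at the center of each arc. Where your proposal diverges, and where it breaks, is the step that transfers the single-scale growth to the combined measure.

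The inequality you call a ``variational formula'',
\[
\|\phi_{k_m}(\cdot,\sigma^*)\|_{L^\infty(\mathbb T)}\ge \frac{|P_m(\zeta_0)|}{\|P_m\|_{L^2(\sigma^*)}},
\]
is false in general. The Christoffel-function extremal property gives only
\[
\sum_{j=0}^{k_m}|\phi_j(\zeta_0,\sigma^*)|^2\;\ge\;\frac{|P_m(\zeta_0)|^2}{\|P_m\|_{L^2(\sigma^*)}^2},
\]
which bounds the whole kernel $K_{k_m}(\zeta_0,\zeta_0)$, not the single term $|\phi_{k_m}(\zeta_0)|^2$. (For normalized Lebesgue measure, $|\phi_n|\equiv 1$ while the Dirichlet kernel makes the right side $\sim\sqrt{n}$, so your inequality cannot hold.) Consequently the cross-term bookkeeping you describe, even if carried out perfectly, yields only that \emph{some} $\phi_j$ with $j\le k_m$ is large at $\zeta_0$, not that $\phi_{k_m}$ is.

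The device the paper actually uses to bridge this gap is the localization principle of Appendix~B (Theorem~\ref{lemma21} and Lemma~\ref{obrez1}): if $w_1=w_2$ on an arc around $\zeta_0$ and both satisfy $m_1\le w_j\le m_2$, then $|\phi_n(\zeta_0,w_1)|$ and $|\phi_n(\zeta_0,w_2)|$ are comparable up to a factor depending only on $m_1,m_2$ and the arc length. Applied with $w_1=\sigma^{*\prime}$ and $w_2$ the single-bump weight (which agree on $I_{k_m}+\theta_m$), this gives a lower bound on $|\phi_{k_m}(e^{i\theta_m},\sigma^*)|$ directly, with no variational test polynomial and no cross-term analysis. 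Your disjoint-arc construction is exactly what is needed to make the hypotheses of Lemma~\ref{obrez1} hold; you should replace the variational step by an appeal to that lemma.
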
 We need to
make three remarks here.\smallskip

{\bf Remark 1.}
The paper \cite{adt} proves more than just the inequality
\eqref{osnova-i}. It shows that for every $p\in
[1,\infty)$ and every $L_1>1$, the following bound holds
\begin{equation}\label{break}
\sup_{w\ge
\delta/(2\pi),\|w\|_{L^1(\mathbb{T})}=1,\|w\|_{L^p(\mathbb{T})}<L_1}\|\phi_n(z,w)\|_{L^\infty(\mathbb{T})}>C_{(\delta,p,L_1)}\sqrt
n\,.
\end{equation}
Here $\{\phi_n(z,w)\}$ are orthonormal with respect to the weight $w$.  In general, we call  function $w$  a weight if $w\in L^1(-\pi,\pi)$, $w\ge 0$, and $w\nequiv 0$.  \smallskip

 {\bf Remark 2.} We will say that the measure $\sigma$ belongs to the Szeg\H{o} class, if 
\begin{equation}\label{szego5}
\int_{-\pi}^{\pi} \log \sigma' d\theta>-\infty.
\end{equation}
It is known (\cite{sim1}, formulas (1.5.78) and (2.3.1)), that
 \begin{equation}\label{fact11}
\exp\left(\frac{1}{4\pi}\int_{\mathbb{T}} \log(2\pi \sigma'(\theta))
d\theta\right)\le 
\left|\frac{\Phi_n(z,\sigma)}{\phi_n(z,\sigma)}\right|\le 1, \quad
\forall z\in \mathbb{C}\,.
\end{equation}
and the left hand side is positive if and only if \eqref{szego5} holds.
Thus, for measures in Steklov class,  we have a bound
\begin{equation}\label{fact12}
\sqrt\delta\le 
\left|\frac{\Phi_n(z,\sigma)}{\phi_n(z,\sigma)}\right|\le 1, \quad
\forall z\in \mathbb{C}\,.
\end{equation}
Therefore, if $\{\Phi_n\}$ or $\{\phi_n\}$ grows in $n$, then the other sequence grows as well.\smallskip

{\bf Remark 3.} The requirement that the measure is a probability
one is not so restrictive due to the following scaling
\begin{equation}\label{scale11}
\phi_n(z,\sigma)=\alpha^{1/2}\phi_n(z,\alpha\sigma), \quad
\Phi_n(z,\sigma)=\Phi_n(z,\alpha\sigma), \quad \alpha>0\,.
\end{equation}\smallskip

Suppose we are given a function $h(t)$ defined on $[0,2\pi]$ which satisfies
\[h(0)=\lim_{t\to 0}h(t)=0.
\]
We will say that
a function $w$, defined on $\mathbb{R}$ and $2\pi$-periodic there, has $h$ as its modulus of continuity
with a constant $L\in [0,\infty)$,  if the estimate
\begin{equation}\label{moduc}
|w(x_2)-w(x_1)|\le L h(|x_2-x_1|)
\end{equation}
holds for all $x_1,x_2\in \mathbb{R}$ that satisfy $|x_1-x_2|\le 2\pi$.

If $w$ is defined on $[-\pi,\pi]$ and $w\in C[-\pi,\pi]$, $w(-\pi)=w(\pi)$,  we can extend it to all of $\mathbb{R}$ as $2\pi$-periodic function. Then, if \eqref{moduc} holds, we will write $w\in C_{h,L}(\mathbb{T})$.

\begin{theorem}{\rm  ((\cite{szego}),} Theorem 12.1.3) \label{bernstein} Assume that
$d\sigma=w(\theta)d\theta$ and $w\in C_{h,L}(\mathbb{T})$,
$h(t)=|\log(t/8)|^{-1-\epsilon}$ with some $\epsilon>0$ and $L$.
Then, as long as $w\ge\delta>0$ for all $\theta\in [-\pi,\pi]$, we
have
\begin{equation}
\phi_n(e^{i\theta})=e^{in\theta} \overline{\Pi(e^{i\theta})}+\epsilon_n(e^{i\theta}), \quad n\to\infty\,,
\end{equation}
where $\epsilon_n(e^{i\theta})\to 0$ uniformly over $\theta\in [-\pi,\pi]$. The function $\Pi(z)$ is defined as the outer function in $\mathbb{D}$
that satisfies $|\Pi(e^{i\theta})|^{-2}=2\pi w(\theta),  \, \Pi(0)>0$.
\end{theorem}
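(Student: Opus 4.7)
The plan is to reduce the claim to a uniform asymptotic for the reversed polynomials $\{\phi_n^*\}$ and then upgrade the classical $L^2$ Szeg\H{o} convergence to uniform convergence on $\mathbb{T}$ by exploiting the Dini-type regularity of $w$. Using \eqref{form12} in the form $\phi_n(e^{i\theta})=e^{in\theta}\overline{\phi_n^*(e^{i\theta})}$, the theorem is equivalent to $\phi_n^*(e^{i\theta})\to\Pi(e^{i\theta})$ uniformly on $\mathbb{T}$. The substitution $u=\log(8/t)$ shows $\int_0^{1}h(t)/t\,dt<\infty$, so $h$ is Dini. Since $w\ge\delta>0$ has modulus of continuity $Lh$, so does $\log w$ with constant $L/\delta$; by Privalov's theorem its harmonic conjugate is then continuous on $\mathbb{T}$, so $\log\Pi$ (whose boundary real part equals $-\tfrac12\log(2\pi w)$) extends continuously to $\overline{\mathbb{D}}$, and hence $\Pi$ does too, with a quantitative modulus of continuity inherited from $h$ and a positive lower bound on $|\Pi|$.

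Next I would invoke the extremal characterization: $\Phi_n^*$ is the unique minimizer of $\|P\|_{L^2(w d\theta)}^2$ on $\{P\in\mathbb{P}_n:P(0)=1\}$, with minimum value $1/\kappa_n^2$. Let $S_n$ be the $n$-th Taylor polynomial of $\Pi$; Dini regularity yields $S_n\to\Pi$ uniformly on $\overline{\mathbb{D}}$, so $L_n:=S_n/\Pi(0)$ is feasible and, using $|\Pi|^2\cdot 2\pi w=1$ on $\mathbb{T}$,
\[
\|L_n\|_{L^2(wd\theta)}^2\to\int\Bigl|\tfrac{\Pi}{\Pi(0)}\Bigr|^2 w\,d\theta=\frac{1}{\Pi(0)^2}.
\]
A direct substitution $F=G\Pi/\Pi(0)$ gives $\inf_{F\in H^2,\,F(0)=1}\int|F|^2w\,d\theta=1/\Pi(0)^2$, so the same limit holds for $1/\kappa_n^2$. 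Orthogonality of the minimizer to $\{Q\in\mathbb{P}_n:Q(0)=0\}$ in $L^2(wd\theta)$ then yields the Pythagorean identity
\[
\|L_n-\Phi_n^*\|_{L^2(wd\theta)}^2=\|L_n\|_{L^2(wd\theta)}^2-\kappa_n^{-2}\to 0,
\]
so $\Phi_n^*\to\Pi/\Pi(0)$ in $L^2(w d\theta)$ and, multiplying by $\kappa_n\to\Pi(0)$, $\phi_n^*\to\Pi$ in $L^2(\mathbb{T})$ (the norms $L^2(wd\theta)$ and $L^2(d\theta)$ being equivalent since $\delta\le w\le\|w\|_\infty$).

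The main obstacle is upgrading this $L^2$ convergence to uniform convergence on $\mathbb{T}$. Since all zeros of $\Phi_n$ lie in $\mathbb{D}$, $\phi_n^*$ is zero-free on $\overline{\mathbb{D}}$; together with the non-vanishing of $\Pi$, this makes $g_n:=\phi_n^*/\Pi$ outer, continuous, and non-vanishing on $\overline{\mathbb{D}}$, with $g_n(0)\to 1$ and $g_n\to 1$ in $L^2(\mathbb{T})$. Via the Herglotz representation
\[
\log g_n(z)=\frac{1}{2\pi}\int\frac{e^{i\theta}+z}{e^{i\theta}-z}\log|g_n(e^{i\theta})|\,d\theta,
\]
uniform convergence of $g_n$ on $\overline{\mathbb{D}}$ reduces to $\log|g_n|=\tfrac12\log(2\pi w|\phi_n^*|^2)\to 0$ uniformly on $\mathbb{T}$. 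Establishing this is the crux: it requires a uniform (in $n$) modulus-of-continuity bound on $|\phi_n^*|$ controlled by $h$, which I would obtain by combining a Bernstein-type estimate with the extremal bound $\int|\phi_n^*|^2w\,d\theta=1$ to control oscillations of $\phi_n^*$ on scales $\gtrsim 1/n$, while using the Dini regularity of $w$ on smaller scales. Once such equicontinuity is in hand, it combines with the $L^2$ convergence to yield uniform convergence on $\mathbb{T}$, which the Poisson representation then transports to uniform convergence of $g_n$ on $\overline{\mathbb{D}}$, giving $\phi_n^*\to\Pi$ uniformly on $\mathbb{T}$ as required.
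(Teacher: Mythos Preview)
The paper does not prove this theorem; it is quoted from Szeg\H{o}'s book and used only as background. So there is no proof in the paper to compare against, and your attempt must stand on its own.

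Your reduction to $\phi_n^*\to\Pi$ uniformly on $\mathbb{T}$ is correct, and the $L^2$ convergence argument via the extremal characterization and the Pythagorean identity is fine and standard. The genuine gap is in the upgrade from $L^2$ to $L^\infty$, and it occurs in two places. First, the assertion that ``uniform convergence of $g_n$ on $\overline{\mathbb{D}}$ reduces to $\log|g_n|\to 0$ uniformly on $\mathbb{T}$'' via the Herglotz representation is not valid as written: the Schwarz kernel blows up at the boundary, and the conjugate-function operator is unbounded on $L^\infty(\mathbb{T})$, so uniform smallness of $\log|g_n|$ alone does not force uniform smallness of its conjugate. You would need a \emph{uniform Dini modulus} for $\log|g_n|$, not just uniform smallness. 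Second, and more fundamentally, your route to equicontinuity of $|\phi_n^*|$ is circular. A Bernstein-type inequality gives $\|(\phi_n^*)'\|_\infty\le n\|\phi_n^*\|_\infty$, but the only a~priori control you have is $\|\phi_n^*\|_{L^2(w)}=1$, which via Nikolskii yields only $\|\phi_n^*\|_\infty\lesssim\sqrt{n}$ and hence oscillation $\lesssim n^{3/2}|x-y|$---worthless for equicontinuity at scale $1/n$. Nothing in your outline supplies a uniform sup-norm bound on $\phi_n^*$ before you have essentially proved the theorem, so the argument does not close.

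Szeg\H{o}'s proof (following Bernstein, whose method the paper alludes to at the start of Section~2) sidesteps this circularity by a comparison argument: approximate $\Pi$ uniformly on $\overline{\mathbb{D}}$ by a polynomial $D_m$ of \emph{fixed} degree $m$ (possible precisely because of the Dini regularity), pass to the Bernstein--Szeg\H{o} weight $w_m=(2\pi|D_m|^2)^{-1}$, for which $\phi_n^*(\cdot,w_m)=D_m$ exactly once $n\ge m$, and then estimate $\|\phi_n^*(\cdot,w)-D_m\|_\infty$ directly from orthogonality and the uniform closeness of $w$ to $w_m$. This gives the uniform bound without ever needing equicontinuity of the unknown family $\{\phi_n^*\}$.
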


In \cite{murman}, given arbitrary $\epsilon>0$ and $L>0$, Ambroladze
constructed a positive weight $\widehat w\in C_{h,L}(\mathbb{T})$ ,
$h(t)=|\log(t/8)|^{-1+\epsilon}$ for which $\{\|\phi_n(z,\widehat
w)\|_{L^\infty(\mathbb{T})}\}$ is unbounded in $n$. This showed sharpness of regularity assumption in Theorem \ref{bernstein}.\smallskip

The bound \eqref{break}  raises a question: what regularity of a weight can
improve the $\sqrt n$ upper bound? In this paper, we give partial
answer to this question. We  consider measures given by the weights $w$ that satisfy  $w,w^{-1}\in L^\infty(\mathbb{T})$ and obtain upper and lower estimates for the possible growth of the supremum norms. In the second section, we provide an argument due to Fedor Nazarov that gives the upper bounds for $\|\phi_n(e^{i\theta},w)\|_{L^\infty(\mathbb{T})}$. The third section contains the proofs of the lower bounds. The Appendices have some auxiliary statements that are used in the main text.\smallskip

We will use the following definitions and  notation. 

\begin{itemize}
\item The Schwartz kernel $C(z,\xi)$ is
defined as
\[
C(z,\xi):=\frac{\xi+z}{\xi-z},\quad\xi\in\mathbb T,\,z\in \mathbb{C}\,.
\]
\item 
The function analytic in $\mathbb{D}=\{z:|z|<1\}$ is called
Caratheodory function if its real part is nonnegative
in~$\mathbb{D}$.

\item Given a set $\Omega$, $\chi_\Omega$ denotes the
characteristic function of $\Omega$.

\item If two positive functions
$f_{1}$ and $f_2$ are given, we write $f_1\lesssim f_2$ if there is an
absolute constant $C$ such that
\[
f_1<Cf_2
\]
for all values of the argument. We define $f_1\gtrsim f_2$
similarly. Writing $f_1\sim f_2$  means $f_1\lesssim f_2\lesssim
f_1$. 

\item The symbol $P_{[n_1,n_2]}$ denotes the $L^2[-\pi,\pi]$ orthogonal
Fourier projection to the Fourier modes $\{n_1,\ldots,n_2\}$:
\[
P_{[n_1,n_2]}f:=\sum_{j=n_1}^{n_2}(2\pi)^{-1/2}e^{ij\theta}\widehat f_j
\]
and 
\[
\widehat f_j:=(2\pi)^{-1/2}\int_{-\pi}^\pi f(\theta)e^{-ij\theta}d\theta
\]
stands for the Fourier coefficients of $f\in L^1(-\pi,\pi)$.
\item The symbol $\cal{F}_n$ will denote the Fejer kernel of degree $n$ given by
\[
\cal{F}_n:=\frac{1}{2\pi n}\left(  \frac{\sin(n\theta/2)}{\sin(\theta/2)}        \right)^2\,.
\]

\item
The Jackson kernel is given by
\[
\cal{K}_n=c_n\cal{F}_n^2\sim n^{-3}\frac{\sin^4(nx/2)}{\sin^4(x/2)}
\]
and $c_n$ is chosen such that
\[
\int_{-\pi}^\pi \cal{K}_ndx=1\,.
\]

\item Throughout the paper, the symbol $C$ denotes a positive absolute constant. Its actual value might change from formula to formula. If, within a certain proof or statement of the result, we write $C_1, C_2$, etc., this means that we want to emphasize that these constants might be different. Sometimes we will use $c$ instead of $C$ to emphasize the difference.

\item If the proof uses parameters, e.g., $\alpha_1,\ldots, \alpha_n$, we  write $C_{(\alpha_1,\ldots,\alpha_n)}$ or $n_{(\alpha_1)}$ to denote a non-negative function defined for all values of these parameters. We use this notation  in those cases when the actual dependence on these parameters is not important (or unknown) to us. Similarly to the constants, if we want to distinguish between them to avoid confusion, we  write, e.g., $C^{(1)}_{(\alpha_1)}, C^{(2)}_{(\alpha_2)}$, etc.

\item If $(\Omega_{1(2)},\mu_{1(2)})$ are two measure spaces and  $A$ is a linear operator, bounded from $L^{p_1}(\Omega_1,\mu_1)$ to $L^{p_2}(\Omega_2,\mu_2)$, then  $\|A\|_{p_1,p_2}$ denotes its operator norm.

\item The following standard notation will be used several times. If $f_1$ is non-negative function, then writing $O(f_1)$ denotes a function, say, call it $f$ for a moment, which satisfies 
\[
|f|\lesssim f_1
\] 
for the specified range of the arguments. For example, writing 
\[
O(\epsilon |\theta|),\quad  \epsilon\in (0,1), \theta\in [-\pi,\pi)
\] denotes a function, say $f$, which satisfies
\[
|f(\theta,\epsilon)|\lesssim \epsilon |\theta|
\]
for $\theta\in [-\pi,\pi),\epsilon\in (0,1)$. In that case, $f_1=\epsilon |\theta|$.
\item If the function $f$ is defined on $\mathbb{T}$, the symbol $\|f\|_p$ means $\|f\|_{L^p(\mathbb{T})}, \,1\le p\le\infty$. 

\item Given two functions $f_1,f_2\in L^1(\mathbb{T})$, we define their convolution by
\[
f_1\ast f_2:=\int_{\mathbb{T}} f_1(\theta-\phi)f_2(\phi)d\phi\,.
\]
\item If $z\in \mathbb{C},z\neq 0$, its argument $\arg z$ is always chosen such that it belongs to $(-\pi,\pi]$.

\end{itemize}
\bigskip

\section{Upper bounds a la Bernstein}

In this section, we will apply an idea that was used by S.N.~Bernstein when proving an analog of Theorem~\ref{bernstein} for the
case of orthogonality on the segment of real line \cite{snb}. In fact, Szeg\H{o}'s proof of Theorem \ref{bernstein} is an adaptation of this method.

\begin{lemma}The monic polynomial $Q_n$ of degree $n$  is $n$-th monic orthogonal polynomial with respect
to a weight $w$ if and only if
\begin{equation}\label{cut}
{P}_{[0,n-1]}(wQ_n)=0\,.
\end{equation}
\end{lemma}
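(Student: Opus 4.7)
The plan is to unpack the definition of the Fourier projection $P_{[0,n-1]}$ and show it is literally a restatement of the orthogonality conditions that define $\Phi_n$.

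First I would observe that $P_{[0,n-1]}(wQ_n) = 0$ is, by definition, equivalent to the vanishing of all Fourier coefficients
\[
\widehat{(wQ_n)}_j = (2\pi)^{-1/2}\int_{-\pi}^{\pi} w(\theta)Q_n(e^{i\theta}) e^{-ij\theta}\,d\theta, \qquad j=0,1,\ldots,n-1.
\]
Since $|e^{ij\theta}|=1$, the factor $e^{-ij\theta}$ equals $\overline{e^{ij\theta}}$, so the condition $\widehat{(wQ_n)}_j = 0$ for $j=0,\ldots,n-1$ is exactly
\[
\int_{-\pi}^{\pi} Q_n(e^{i\theta})\,\overline{e^{ij\theta}}\,w(\theta)\,d\theta = 0, \qquad j=0,\ldots,n-1.
\]
By linearity in the second slot, this is the same as $\int Q_n \overline{R}\,w\,d\theta = 0$ for every $R\in \mathbb{P}_{n-1}$.

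Next I would invoke the standard characterization of the monic orthogonal polynomial: $\Phi_n(z,w)$ is the unique monic polynomial of degree $n$ that is orthogonal (with respect to $w\,d\theta$) to all polynomials of degree at most $n-1$. Hence if $Q_n$ is monic of degree $n$ and satisfies $P_{[0,n-1]}(wQ_n)=0$, the display above forces $Q_n = \Phi_n$; conversely, if $Q_n = \Phi_n$, the orthogonality relations immediately give $P_{[0,n-1]}(wQ_n)=0$. Uniqueness of the monic orthogonal polynomial of a given degree (which uses only that $w$ is not identically zero, so the matrix of moments is positive definite on $\mathbb{P}_{n-1}$) closes the argument.

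There is essentially no obstacle here: the statement is a reformulation of orthogonality in Fourier language, and the only point worth flagging is that $\{e^{ij\theta}\}_{j=0}^{n-1}$ and $\{1,z,\ldots,z^{n-1}\}$ span the same subspace on $\mathbb{T}$, so projecting $wQ_n$ onto the low Fourier modes is the right way to encode orthogonality to $\mathbb{P}_{n-1}$ with respect to $w\,d\theta$.
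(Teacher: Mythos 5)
Your argument is correct and is essentially identical to the paper's one-line proof: the paper likewise notes that $P_{[0,n-1]}(wQ_n)=0$ is the same as $\int_{-\pi}^{\pi} Q_n e^{-ij\theta} w\,d\theta = 0$ for $j=0,\ldots,n-1$, which is the orthogonality condition. Your version just spells out the intermediate steps (conjugation of $e^{ij\theta}$, spanning of $\mathbb{P}_{n-1}$, uniqueness of the monic orthogonal polynomial) in more detail.
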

\begin{proof}It is sufficient to notice that \eqref{cut} is
equivalent to
\[
\int_{-\pi}^{\pi} Q_ne^{-ij\theta}wd\theta=0, \quad j=0,\ldots,n-1\,,
\]
which is the orthogonality condition.
\end{proof}

\begin{lemma}\label{hilb}
For every $p\in [2,\infty)$,
\begin{equation}\label{karas}
\|{P}_{[0,n]}\|_{p,p}\le 1+C(p-2)\,.
\end{equation}
\end{lemma}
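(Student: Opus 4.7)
The plan is to interpolate, anchoring at $p=2$ where $P_{[0,n]}$ is an orthogonal projection on $L^2(\mathbb{T})$ and therefore has operator norm exactly $1$. The bound $1+C(p-2)$ is precisely what one obtains by linearizing in $p-2$ the output of a Riesz--Thorin interpolation between $(2,1)$ and $(q,M)$ for any fixed $q\in(2,\infty)$.

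First I would establish a uniform (in $n$) bound at a convenient fixed $q>2$, say $q=4$. Writing $P_+$ for the Riesz projection onto nonnegative Fourier modes, a direct check gives the identity
\[
P_{[0,n]} = P_+ - M_{e^{i(n+1)\theta}}\, P_+\, M_{e^{-i(n+1)\theta}},
\]
where $M_{\varphi}$ denotes multiplication by $\varphi$. Since multiplication by a unimodular function is an isometry on every $L^p(\mathbb{T})$, this yields
\[
\|P_{[0,n]}\|_{q,q}\le 2\|P_+\|_{q,q}=:M,
\]
and the right-hand side is an absolute constant by the M.\,Riesz theorem. I deliberately do \emph{not} use this identity at $p=2$, since there it would only produce $2$, while orthogonality gives the sharp value $1$.

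Next I would apply the Riesz--Thorin interpolation theorem between $p=2$ (norm $1$) and $p=q$ (norm $\le M$). For $p\in[2,q]$ with $1/p=(1-t)/2+t/q$ one obtains
\[
\|P_{[0,n]}\|_{p,p}\le 1^{1-t}M^t=M^t.
\]
The function $x\mapsto M^x$ is convex on $[0,1]$, hence $M^t\le 1+t(M-1)$; combined with $t=\frac{q(p-2)}{p(q-2)}\le \frac{q}{2(q-2)}(p-2)$, this produces the desired bound $\|P_{[0,n]}\|_{p,p}\le 1+C(p-2)$ on $[2,q]$, with $C$ absolute (depending only on the fixed choice of $q$).

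For $p\ge q$ I would invoke the classical linear-in-$p$ bound $\|P_+\|_{p,p}\lesssim p$ (M.\,Riesz theorem, with the known asymptotic $\|P_+\|_{p,p}\sim p/\pi$ as $p\to\infty$). Applied to the decomposition above, this yields $\|P_{[0,n]}\|_{p,p}\lesssim p$, which for $p\ge q$ is plainly dominated by $1+C'(p-2)$ after enlarging $C'$. Combining the two ranges gives the lemma. The only real obstacle is constant bookkeeping: one must insist on the \emph{exact} value $\|P_{[0,n]}\|_{2,2}=1$ at the lower endpoint (otherwise the linearization would lose the crucial fact that the bound tends to $1$ as $p\to 2^+$), and then verify via convexity that $M^t$ gives linear, not exponential, growth in $p-2$ on the interpolation window.
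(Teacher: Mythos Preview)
Your proposal is correct and follows essentially the same approach as the paper: the same decomposition $P_{[0,n]}=P_+-M_{e^{i(n+1)\theta}}P_+M_{e^{-i(n+1)\theta}}$, the exact value $\|P_{[0,n]}\|_{2,2}=1$, Riesz--Thorin interpolation between $p=2$ and a fixed $q>2$, and the linear-in-$p$ M.~Riesz bound for large $p$. The only cosmetic differences are that the paper interpolates to $q=3$ rather than $q=4$ and quotes the sharp Hollenbeck--Verbitsky constant $\|P_+\|_{p,p}=1/\sin(\pi/p)$ instead of the generic M.~Riesz bound.
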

\begin{proof}If $\cal{P}^+$ is the projection of $L^2(\mathbb{T})$ onto $H^2(\mathbb{T})$ (Riesz projection), then we can write an identity
\[
{P}_{[0,n]}=\cal{P}^+-z^{n+1} \cal{P}^+ z^{-(n+1)}\,.
\]
In \cite{iv}, it was proved that
\[
\|\cal{P}^+\|_{p,p}=\frac{1}{\sin(\pi/p)}, \quad 1<p<\infty\,.
\]
Thus \[\|P_{[0,n]}\|_{p,p}\le \frac{2}{\sin(\pi/p)}\]
by triangle inequality. On the other hand, we have $\|P_{[0,n]}\|_{2,2}=1$ and the Riesz-Thorin Theorem \cite{folland} allows to interpolate between $p=2$ and, e.g., $p=3$ to get a bound
\[
\|P_{[0,n]}\|_{p,p}\le 1+C(p-2), \quad 2<p<3\,.
\]
Since
\[
\frac{2}{\sin(\pi/p)}\sim p
\]
for $p\ge 3$, the proof is finished.
\end{proof}

The proof of the following result is due to Fedor Nazarov (personal communication). We give it here for completeness.

\begin{theorem}{\it (F. Nazarov)}\label{fodd}
If $\epsilon\in (0,1]$, then
\begin{equation}\label{contract1}
\sup_{1\le w\le 1+\epsilon} \|\Phi_n(e^{i\theta},w)\|_p\lesssim 1, \quad p=C_1\epsilon^{-1}\,.
\end{equation}
If $T>2$, then
\begin{equation}\label{contract2}
\sup_{1\le w\le T} \|\Phi_n(e^{i\theta},w)\|_p \lesssim 1, \quad p=2+C_2T^{-1}\,.
\end{equation}
\end{theorem}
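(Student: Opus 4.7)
The plan is to derive both parts from the identity in Lemma 2.1 combined with the operator bound in Lemma 2.2, after first rescaling $w$ so that it is centered around $1$. By the scale invariance $\Phi_n(z,w)=\Phi_n(z,\alpha w)$ valid for any $\alpha>0$, I can replace $w$ by $\tilde w:=2w/(M+m)$, where $m=\essinf w$ and $M=\esssup w$. The rescaled weight lies in the interval $[2m/(M+m),2M/(M+m)]$, symmetric about $1$, so $\|\tilde w-1\|_\infty=(M-m)/(M+m)$: in part one this quantity equals $\epsilon/(2+\epsilon)\le \epsilon/2$, and in part two it equals $(T-1)/(T+1)=1-2/(T+1)$.

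Next, Lemma 2.1 applied to $\tilde w$ gives
\[
\Phi_n=z^n-P_{[0,n-1]}\bigl((\tilde w-1)\Phi_n\bigr),
\]
so that taking the $L^p$ norm and invoking Lemma 2.2 produces the self-referential estimate
\[
\|\Phi_n\|_p\le (2\pi)^{1/p}+\bigl(1+C(p-2)\bigr)\,\|\tilde w-1\|_\infty\,\|\Phi_n\|_p.
\]
Setting $\lambda:=(1+C(p-2))\,\|\tilde w-1\|_\infty$, whenever $\lambda<1$ one solves for $\|\Phi_n\|_p\le(2\pi)^{1/p}/(1-\lambda)$. In part one I will choose $p=C_1\epsilon^{-1}$ with $C_1$ a small absolute constant, so that $\lambda\le(1+CC_1/\epsilon)\cdot\epsilon/2$ stays at most $1/2$; in part two I will choose $p=2+C_2T^{-1}$ with $C_2$ small, so that $\lambda=(1+CC_2/T)(1-2/(T+1))$ stays below $1$.

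The delicate step is part two: because $(T-1)/(T+1)\to 1$ as $T\to\infty$, the contraction coefficient $\lambda$ can only be pushed below $1$ by a margin of order $1/T$, which is precisely why one must take $p=2+O(1/T)$. Verifying that in this narrow window the Riesz–Thorin-based bound $\|P_{[0,n-1]}\|_{p,p}\le 1+C(p-2)$ from Lemma 2.2 is sharp enough, and that the resulting denominator $1-\lambda$ produces a bound of the claimed form, is the main arithmetic to be checked. In part one, by contrast, there is ample room for contraction and the estimate follows directly.
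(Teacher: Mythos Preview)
Your proposal is correct and uses essentially the same contraction argument as the paper. The only cosmetic difference is that you center the weight at $1$ via the rescaling $\tilde w=2w/(M+m)$ (invoking $\Phi_n(z,\alpha w)=\Phi_n(z,w)$), whereas the paper writes the identity with a free parameter, $\Phi_n=z^n+P_{[0,n-1]}\bigl((1-\kappa w)\Phi_n\bigr)$, and takes $\kappa=T^{-1}$ in part two; your rescaling is just the choice $\kappa=2/(M+m)$, and the remaining arithmetic is identical.
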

\begin{proof}
Let $\mu=w-1$. Following Bernstein, we use \eqref{cut} to get the formula
\begin{eqnarray*}
\Phi_n(z,w)=z^n+(2\pi)^{-1}\sum_{j=0}^{n-1} z^j \int_{\mathbb{T}}
\Phi_n(e^{i\theta},w)e^{-ij\theta}d\theta\\=z^n-(2\pi)^{-1}\sum_{j=0}^{n-1}
z^j \int_{\mathbb{T}}
\Phi_n(e^{i\theta},w)e^{-ij\theta}\mu(\theta)d\theta
=z^n-{P}_{[0,n-1]}(\mu \Phi_n)\,.
\end{eqnarray*}
Thus, $\Phi_n$ is a fixed point of the operator defined by
\[
f\to z^n-{P}_{[0,n-1]}(\mu f)\
\]
on $f\in L^p(\mathbb{T})$. If $1\le w\le 1+\epsilon$, then $\|P_{[0,n-1]}\mu\|_{p,p}\le Cp\epsilon$ by \eqref{karas}. Choosing $p$ such that $Cp\epsilon<1$ makes $P_{[0,n-1]}\mu$ a contraction in $L^p(\mathbb{T})$. Then, \eqref{contract1} follows from the Fixed Point Theorem for contractions.

To prove \eqref{contract2}, we need to modify this argument a little. From \eqref{cut}, we get
\begin{equation}\label{goreg}
\Phi_n=z^n+P_{[0,n-1]}(1-\kappa w)\Phi_n\,,
\end{equation}
where $\kappa$ is an arbitrary real parameter. Taking $\kappa=T^{-1}$, we can write
\[
0\le 1-\kappa w\le 1-T^{-1}
\]
and \eqref{karas} implies
\[
\|P_{[0,n-1]}(1-\kappa w)\|_{p,p}\le (1+C(p-2))(1-T^{-1})=1-(1-C \lambda)T^{-1}-C\lambda T^{-2}\,,
\]
provided that $p=2+\lambda T^{-1}$. Choosing $\lambda<C^{-1}$  makes the right hand side of \eqref{goreg} a contraction in $L^p(\mathbb{T})$. This again proves \eqref{contract2} by the Fixed Point Theorem if we let \mbox{$C_2:=\lambda$.}
\end{proof}

By Nikolskii inequality for elements of $\mathbb{P}_n$, we have
\begin{corollary}
If $\epsilon\in (0,1]$, then
\begin{equation}\label{contract11}
\sup_{1\le w\le 1+\epsilon} \|\Phi_n(e^{i\theta},w)\|_\infty\lesssim  n^{C_3\epsilon}\, .
\end{equation}
If $T>2$, then
\begin{equation}\label{contract12}
\sup_{1\le w\le T} \|\Phi_n(e^{i\theta},w)\|_\infty\lesssim n^{\frac 12-C_4T^{-1}}\,.
\end{equation}
\end{corollary}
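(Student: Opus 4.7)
The plan is to invoke the Nikolskii inequality for algebraic polynomials on the unit circle, which states that for every $P\in\mathbb{P}_n$ and every $p\in[1,\infty)$,
\[
\|P\|_\infty \lesssim n^{1/p}\|P\|_p.
\]
Since $\Phi_n(\cdot,w)\in\mathbb{P}_n$, this reduces both estimates to a straightforward interpolation between Theorem \ref{fodd} and the trivial $L^\infty$ bound, with the savings in the exponent coming entirely from the improved value of $p$ furnished by the theorem.

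For \eqref{contract11}, I would apply Theorem \ref{fodd} with $p=C_1\epsilon^{-1}$ to obtain $\|\Phi_n(\cdot,w)\|_p\lesssim 1$ uniformly over $1\le w\le 1+\epsilon$, and then the Nikolskii bound gives
\[
\|\Phi_n(\cdot,w)\|_\infty \lesssim n^{1/p} \|\Phi_n(\cdot,w)\|_p \lesssim n^{\epsilon/C_1},
\]
so one sets $C_3:=1/C_1$.

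For \eqref{contract12}, applying Theorem \ref{fodd} with $p=2+C_2T^{-1}$ yields $\|\Phi_n(\cdot,w)\|_p\lesssim 1$ for $1\le w\le T$, and Nikolskii gives $\|\Phi_n(\cdot,w)\|_\infty \lesssim n^{1/p}$. A direct computation shows
\[
\frac{1}{p}=\frac{1}{2+C_2T^{-1}}=\frac{T}{2T+C_2}=\frac{1}{2}-\frac{C_2}{2(2T+C_2)},
\]
and for $T>2$ the last fraction is bounded below by $C_4 T^{-1}$ with, e.g., $C_4:=C_2/(4+C_2)$. Thus $\|\Phi_n(\cdot,w)\|_\infty \lesssim n^{1/2-C_4 T^{-1}}$, which is \eqref{contract12}.

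There is essentially no obstacle beyond recording the Nikolskii inequality in the form needed on $\mathbb{T}$; this is standard and follows, for instance, from the Bernstein inequality together with the fact that an element of $\mathbb{P}_n$ restricted to $\mathbb{T}$ is a trigonometric polynomial of degree $n$. If desired, one could state this as a separate auxiliary lemma in the Appendix, but no new ideas are required.
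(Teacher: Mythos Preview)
Your proposal is correct and follows exactly the same route as the paper: apply the Nikolskii inequality $\|\Phi_n\|_\infty\lesssim n^{1/p}\|\Phi_n\|_p$ with the values of $p$ supplied by Theorem~\ref{fodd}. The paper's proof is in fact terser than yours, simply citing the Nikolskii bound and saying ``taking $p$ as in \eqref{contract1} and \eqref{contract2} finishes the proof''; your explicit computation of $C_3$ and $C_4$ is a helpful addition but not a different idea.
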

\begin{proof} We have an estimate (see, e.g., \cite{zygmund}, p.154, Theorem 6.35)
\[
\|\Phi_n\|_\infty\lesssim n^{1/p}\|\Phi\|_p, \quad p\ge 2\,.
\]
Taking $p$ as in \eqref{contract1} and \eqref{contract2} finishes the proof. 
\end{proof}

{\bf Remark.} The estimate \eqref{contract12} shows that \eqref{break} is no longer true if $w$ is in Steklov class and belongs to $L^\infty(\mathbb{T})$.

{\bf Remark.} Due to \eqref{fact11} and \eqref{scale11}, the analogous results for orthonormal polynomials and for probability measures  hold as well.\bigskip

\section{Lower bounds}

The following Theorem is the main result of the paper.

\begin{theorem}\label{mainn}
If $\epsilon\in (0,1]$, then
\begin{equation}\label{contra1}
\sup_{1\le w\le 1+\epsilon} \|\Phi_n(e^{i\theta},w)\|_\infty>C^{(1)}_{(\epsilon)} n^{C_5\epsilon}\,.
\end{equation}

If $T>2$, then
\begin{equation}\label{contra2}
\sup_{1\le w\le T} \|\Phi_n(e^{i\theta},w)\|_\infty>C^{(2)}_{(T)}n^{\frac 12-C_6T^{-1/4}}\,.
\end{equation}

\end{theorem}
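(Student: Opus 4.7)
The plan is to construct, for each $\epsilon$ (resp.\ $T$), an explicit family of weights $w$ satisfying $1 \le w \le 1+\epsilon$ (resp.\ $1 \le w \le T$) for which $\|\Phi_n(e^{i\theta},w)\|_\infty$ attains the claimed growth. The natural starting point is the construction of \cite{adt}, which produces Steklov class weights $w^*$ with $\|\phi_n(z,w^*)\|_\infty \gtrsim \sqrt{n}$; the task is to impose the missing upper bound on $w$ and to track how the orthogonal polynomial changes.

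First I would treat \eqref{contra2}. The idea is to take the \cite{adt} weight $w^*$ and truncate it from above, defining $\tilde{w}:=\min(w^*,T)$ (rescaled back into the prescribed class if necessary). Since $w^*$ obeys $L^p$ bounds for moderate $p$, the bad set $E:=\{w^*>T\}$ has measure controlled by $|E|\lesssim T^{-p}\|w^*\|_p^p$. The extremal property of $\Phi_n(\cdot,\tilde w)$, together with the identity $\Phi_n(z,\tilde w) - \Phi_n(z,w^*) = -P_{[0,n-1]}((\tilde w - w^*)\Phi_n(z,\tilde w))/w^*$ type manipulations used in Section~2, lets one compare $\|\Phi_n(\cdot,\tilde w)-\Phi_n(\cdot,w^*)\|_{L^2(w^*)}$ to $|E|^{1/2}\|\Phi_n(\cdot,w^*)\|_\infty$; combining with Nikolskii's inequality converts an $L^2$ comparison back to $L^\infty$. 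Optimizing the parameters of the underlying \cite{adt} construction (in particular, the number of bumps, their widths, and their heights) against $T$ yields the exponent $\tfrac12-C_6T^{-1/4}$, with the fourth root emerging from this optimization.

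Next, for \eqref{contra1}, the weight differs from $1$ by at most $\epsilon$, so I would build the example multiplicatively. Choose a building block of the form $u_0(\theta) = 1 + \epsilon\, p_m(\theta)$, where $p_m$ is a nonnegative trigonometric polynomial of degree $m$ localized near a chosen point (for instance, a normalized Jackson kernel $\mathcal{K}_m$ scaled so that $0 \le p_m \le 1$). The fixed point equation $\Phi_n = z^n - P_{[0,n-1]}((u_0-1)\Phi_n)$ of Section~2, iterated in the perturbation parameter $\epsilon$, shows that for an appropriate choice of $m$ (comparable to $n$) the polynomial $\Phi_n(\cdot,u_0)$ exceeds $\|z^n\|_\infty$ by a factor $1 + c\epsilon$. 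One then composes such building blocks by taking $w = \prod_{j=1}^N u_0(\theta-\theta_j)$ with $\{\theta_j\}$ spread along the circle and appropriately placed dyadic scales $m_j$, so that the growth factors compound across scales. After $N \sim \log n$ layers, the amplification accumulates to $(1+c\epsilon)^N \sim n^{C_5\epsilon}$, and one verifies that $1 \le w \le (1+\epsilon)^N$ can be rescaled into $[1,1+O(\epsilon)]$ with only a constant loss absorbed into $C^{(1)}_{(\epsilon)}$.

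The main obstacle will be the parameter optimization in \eqref{contra2}: the number of bumps, their widths, their heights, and the target $n$ all interact, and one must preserve the $\sqrt n$ mechanism of \cite{adt} while paying only $T^{-1/4}$ in the exponent when the weight is capped at $T$. The gap between the $T^{-1}$ of \eqref{contract12} and the $T^{-1/4}$ here is precisely the price of going through truncation and $L^2$-to-$L^\infty$ conversion; closing it would require a construction that exploits the $L^\infty$ constraint directly rather than through $L^p$ moment bounds. For \eqref{contra1} the analogous difficulty is verifying that the multiplicative composition genuinely compounds the growth rather than averaging it out, which is where the localization properties of the building block $p_m$ become essential.
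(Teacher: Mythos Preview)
Both halves of your plan have genuine gaps. For \eqref{contra1}, the multiplicative stacking fails a basic constraint check: if the $N\sim\log n$ blocks overlap (as they must, at nested scales around a common point, for the amplifications to compound there), then $\max w/\min w=(1+\epsilon)^N\sim n^{c\epsilon}$, and no rescaling can reduce this \emph{ratio} to $1+O(\epsilon)$; if instead the supports are disjoint, the ratio is fine but you give no mechanism by which the separate bumps amplify the polynomial at a single point. Even the single-block claim that $\|\Phi_n(\cdot,1+\epsilon p_m)\|_\infty\ge 1+c\epsilon$ is unjustified: the first fixed-point correction $\epsilon P_{[0,n-1]}(p_m z^n)$ has $L^\infty$ size $O(\epsilon\log n)$ a priori, with no sign control working in your favor. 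For \eqref{contra2}, truncating the \cite{adt} weight is not a small perturbation in the relevant sense: $w^*$ is large precisely on the arc where the extremal polynomial concentrates its mass, so capping it there is a leading-order change for $\Phi_n$, and your $|E|$-based $L^2$ comparison gives no reason $\|\Phi_n(\cdot,\tilde w)\|_\infty$ retains any positive power of $n$. The exponent $T^{-1/4}$ is asserted rather than derived from any visible optimization.

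The paper does neither truncation nor stacking; it works forward via Lemma~\ref{decop}, \emph{prescribing} the polynomial $\phi_{2n}^*$ and a Carath\'eodory function $\widetilde F$, and then reading off the weight $\sigma'$ from the explicit formula \eqref{mp}. For \eqref{contra1} the choice $\widetilde F=2/h_n$ with $h_n=2(1-e^{i\theta})^\epsilon\ast\mathcal{F}_n$ and $\phi_{2n}^*=\alpha_n(q_n+q_n^*+q_nh_n)$ gives $|\phi_{2n}(1)|\gtrsim n^{\epsilon/2}$ directly, and a calculation shows that $\sigma'$ deviates from a constant by $O(\epsilon)$ on a fixed arc $I_\epsilon$ while staying $\sim 1$ globally. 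For \eqref{contra2} the same template with exponent $\alpha=1-\tau$ (and the Jackson kernel in place of Fej\'er) yields $|\phi_{2n}(1)|\gtrsim n^{\alpha/2}$ together with $\max_{I_\tau}\sigma'/\min_{I_\tau}\sigma'\lesssim\tau^{-4}$; this is where $T\sim\tau^{-4}$, hence the $T^{-1/4}$, actually comes from. In both cases the localization principle (Lemma~\ref{obrez1}) then replaces $\sigma'$ outside the small arc by a constant, producing a weight in the required global class at the cost of only an $\epsilon$- or $T$-dependent multiplicative constant, with no perturbation or comparison argument needed.
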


{\bf Remark.} The constants $C$ in these bounds are different from those in the corollary above. 
The bound \eqref{contra1} implies that for every $\alpha_1\in (0,C_5)$, there is $n_{(\epsilon,\alpha_1)}\in \mathbb{N}$  such that 
\[
\sup_{1\le w\le 1+\epsilon} \|\Phi_n(e^{i\theta},w)\|_\infty> n^{\alpha_1\epsilon}\,
\]
for all $n>n_{(\epsilon,\alpha_1)}$. Analogous claim can be made for 
\eqref{contra2}.

 The main tool for proving this Theorem will be
 the technique developed in \cite{adt} to obtain the sharp estimates in Steklov's problem (\cite{adt},
\cite{den-jat}, see Appendix~A), along with the localization
principle (\cite{adt}, Appendix~B).

\begin{proof}{\it (of Theorem \ref{mainn})}. The proof will consist of two parts (estimate \eqref{contra1} and estimate \eqref{contra2}).  In the first one,
we will construct a weight having deviation from a constant at
most $C\epsilon$ on a small arc around the point $\theta=0$. The
corresponding orthogonal polynomial will be large at $z=1$. Then,
using the localization principle, which is discussed in Appendix~B, we will provide a weight of small
deviation over the whole circle and show that the corresponding polynomial still has the required size.
In the second part, we will apply a similar strategy to handle the weights with large deviation.

\subsection{The case of small deviation, estimate \eqref{contra1}}
We will follow the strategy used in \cite{adt} and explained in Lemma~\ref{decop} in Appendix A.
Let us assume that $n$ is even, the case of odd $n$ can be handled similarly. 
Thus, we will be applying this Lemma taking $2n$ instead of $n$.  Below, we introduce $\widetilde F$, $\phi_{2n}^*$ and
check that they satisfy conditions of Lemma~\ref{decop}. Then, we
will control how the weight $\sigma'$ in \eqref{mp} deviates from a constant. 
Notice that, when proving \eqref{contra1}, it is sufficient to consider $\epsilon\in (0,\epsilon_0)$ where $\epsilon_0\in (0,1)$ is fixed. We can also assume that $n>n_{(\epsilon)}$.    \smallskip

{\bf (a)} Consider auxiliary function $h_n$ given by
\[
h_n:=2(1-e^{i\theta})^{\epsilon}\ast \cal{F}_n\,.
\]
Clearly $h_n$ is a
polynomial of degree $n-1$. Since $\Re (1-e^{i\theta})^\epsilon\geq 0$ and $\cal{F}_n\geq 0$,  its real part is strictly
positive over $\mathbb{T}$ so $h_n$ is zero-free in
$\overline{\mathbb{D}}$. In Lemmas \ref{frukt} and \ref{lemma1} of Appendix~C, more
detailed information is obtained, in particular,
\begin{equation}\label{arga}
|{\arg}\, h_n|\lesssim \epsilon
\end{equation}
uniformly in $\theta\in [-\pi,\pi)$ and $n>n_{(\epsilon)}$.

Because $\int_{-\pi}^\pi \cal{F}_nd\theta=1$, we get
\begin{equation}\label{form13}
\int_{-\pi}^\pi h_n d\theta=2\pi h_n(0)=4\pi\,.
\end{equation}
Since
$\cal{F}_n$ is even and \mbox{$\Im (1-e^{i\theta})^{\epsilon}$} is
odd on $[-\pi,\pi]$, we know that $\Im H_n$ is odd on $[-\pi,\pi]$ as well.\smallskip

{\bf  (b)} In Lemma \ref{lemma1}, choose 
\begin{equation}\widetilde F:=\frac{2}{h_n}\,.\label{defan}
\end{equation}
From the properties of $h_n$,
we get analyticity of $\widetilde F_n$ in $\mathbb{D}$ and infinite
smoothness on $\mathbb{T}$. Since
\begin{equation}\label{inva}
\Re \widetilde F=2\frac{\Re h_n}{|h_n|^2},
\end{equation}
$\widetilde F$ is Caratheodory function. Moreover, since
$h_n(0)=2$, we get the normalization \mbox{$\Re \widetilde F(0)=1$}.

From \eqref{arga} and \eqref{inva}, we also have

\begin{equation}\label{nata}
1\le\Re h_n \cdot \Re \widetilde F\le 2
\end{equation}
uniformly in $\theta\in [-\pi,\pi), n>n_{(\epsilon)}$.\smallskip

{ \bf  (c)} Notice that $\Bigl( \Re \widetilde F \Bigr)\ast \cal{F}_n$ is a positive trigonometric polynomial of degree at most $n-1$.
Let $q_n\in \mathbb{P}_{n-1}$ be 
zero free in $\mathbb{D}$ and satisfy
\begin{equation}\label{setit}
|q_n|^2=\Bigl( \Re \widetilde F \Bigr)\ast \cal{F}_n, \quad z\in \mathbb{T}, \quad q_n(0)>0\,.
\end{equation}
The existence and uniqueness of this polynomial $Q_n$ is well-known (\cite{szego}, Theorem 1.2.2).
We will need this $Q_n$ in the next argument.

In Lemma \ref{lemma1}, we also need to choose $\phi_{2n}$. We will first specify $\phi_{2n}^*$ and then will take $\phi_{2n}:=(\phi_{2n}^*)^*$. In this calculation, the operation $(\ast)$ acts on $\mathbb{P}_{2n}$. 

We let 
\begin{equation}\label{popug}
\phi^*_{2n}:=\alpha_n(q_n+q_n^*+q_nh_n)\,,
\end{equation} where $\alpha_n$ is
the positive normalization parameter chosen to ensure that
\begin{equation}\label{form15}
\|\phi_{2n}^{-1}\|^2_{L^2(\mathbb{T})}=\|(\phi_{2n}^*)^{-1}\|^2_{L^2(\mathbb{T})}=2\pi\,.
\end{equation}
as required in \eqref{norma}. We will  obtain the estimates for $\alpha_n$ below. 
We emphasize again  that $Q_n^*$ in \eqref{popug} is
understood as $(\ast)$--operation on $\mathbb{P}_{2n}$. Thus,
$\deg \phi_{2n}^*\le 2n$.

To make sure that $\phi^*_{2n}$ has no zeroes in
$\overline{\mathbb{D}}$, we  write
\begin{equation}\label{bez-nuley}
q_n+q_n^*+q_nh_n=q_n\left(1+h_n+\frac{q_n^*}{q_n}\right)\,.
\end{equation}
Then, $q_n$ is zero-free in $\overline{\mathbb{D}}$ and $1+h_n+{q_n^*}/{q_n}$ is analytic in
$\mathbb{D}$ having the positive real part since
\[
\Re h_n>0, \quad \Re \left(1+\frac{q_n^*}{q_n}\right)\ge 0, \quad
z\in \mathbb{T}\,.
\]
The last inequality is the consequence of $|q_n^*|=|q_n|, z\in
\mathbb{T}$ (see \eqref{form12}).\smallskip

At the point $z=0$, we have
\[
q_n(0)+q_n^*(0)+q_n(0)h_n(0)=q_n(0)(1+h_n(0))\,,
\]
because $q_n^*(0)=0$. Then, \eqref{form13} and \eqref{setit} ensure that the right hand side in the last expression is positive. Since the polynomial $q_n+q_n^*+q_nh_n$ is positive at zero, we get $\deg \phi_{2n}=\deg ((q_n+q_n^*+q_nh_n)^*)=2n$ because, again, $(\ast)$ acts on $\mathbb{P}_{2n}$.

Next, we will show that
\begin{equation}\label{updown}
\alpha_n\sim 1\,,
\end{equation}
 if $n>n_{(\epsilon)}$. Indeed, the following estimate holds
\begin{equation}\label{ocenka-sni}
|q_n+q_n^*+q_nh_n|^2=\left|q_n\left(1+h_n+\frac{q_n^*}{q_n}\right)\right|^2\ge
|q_n|^2(\Re h_n)^2=(\Re h_n)^2(\Re \widetilde
F\ast \cal{F}_n)\,,
\end{equation}
where we have used \eqref{setit} at the last step. 
Let us rewrite the last expression using \eqref{inva}
\[
(\Re h_n)^2(\Re \widetilde
F\ast \cal{F}_n)=(\Re h_n)^2\Re \widetilde F\cdot \frac{(\Re \widetilde F\ast
\cal{F}_n)}{\Re \widetilde F}=    2\frac{(\Re h_n)^3}{|h_n|^2} \cdot \frac{(\Re \widetilde F\ast
\cal{F}_n)}{\Re \widetilde F}\,.
\]
For the second factor, we have
\begin{equation}\label{form14}
\left| \frac{(\Re \widetilde F\ast
\cal{F}_n)}{\Re \widetilde F}\right|\sim 1\,,
\end{equation}
if $\theta\in [-\pi,\pi)$ and  $n>n_{(\epsilon)}$. This is due to
Lemma~\ref{lemma2}.

 From \eqref{arga}, we get
\[
\frac{(\Re  h_n)^2 }{| h_n|^2}\sim 1
\]
under the same assumptions as in \eqref{form14}.

Thus, $|q_n+q_n^*+q_nh_n|^2\gtrsim \Re h_n$ if $\theta\in [-\pi,\pi),
n>n_{(\epsilon)}$. Therefore, Lemmas \ref{frukt}, \ref{lemma1}, \ref{lemma2}, and \eqref{nata}   give
\[
|\theta|^\epsilon\lesssim  |q_n+q_n^*+q_nh_n|^2\lesssim |q_n|^2=\Bigl( \Re \widetilde F \Bigr)\ast \cal{F}_n\lesssim |\theta|^{-\epsilon}
\]
for $\theta\in [-\pi,\pi)$, uniformly in $n>n(\epsilon)$.
Therefore, we have \eqref{updown}.\bigskip

Now that we checked all conditions in Lemma \ref{decop}, we can apply it. Consider the value of $\phi_{2n}^*$ at $z=1$. Since $|q_n(e^{i\theta})|$ is even and $q_n(0)>0$, we have $q_n(1)\in \mathbb{R}$. Thus, $q_n^*(1)=q_n(1)$ and
\[
|\phi_{2n}^*(1,\sigma)|^2= \alpha_n^2|q_n(1)(2+h_n(0))|^2\sim |\Re\widetilde
F\ast \cal{F}_n|_{z=1}\ge
\]
\[
 |\Re \widetilde F|_{z=1}-|\Re
\widetilde F|_{z=1}\left|    \frac{\Re \widetilde F\ast \cal
F_n}{\Re \widetilde F}-1\right|_{z=1}\gtrsim |\Re \widetilde F|_{z=1}
\]
by Lemma \ref{lemma2}, provided that $n>n_{(\epsilon)}$.  Now, \eqref{inva} and Lemma \ref{lemma1}
yield
\[
|\phi_{2n}(1,\sigma)|^2=|\phi_{2n}^*(1,\sigma)|^2\gtrsim n^\epsilon\,,
\]
which ensures the necessary growth (compare with \eqref{contra1}).\smallskip

Next, we will study the weight $\sigma'$ given by the formula  \eqref{mp}.
We can write
\[
\sigma'^{-1}=\upsilon_n \cal{ABC}\,,
\]
where $\upsilon_n=\pi\alpha_n^2/2$ is $n$--dependent parameter
and
\begin{equation}\label{brown1}
\cal{A}:=\frac{|q_n|^2}{\Re \widetilde F},\quad
\cal{B}:=|2+\overline{h}_n(1-\widetilde F)|^2, \quad
\cal{C}:=\left|\xi+\frac{2+h_n(1+\widetilde
F)}{2+\overline{h}_n(1-\widetilde F)}\right|^2, \quad
\xi=e^{i(2n\theta-2\Theta)}\,,
\end{equation}
where $\Theta=\arg q_n$. The estimate \eqref{updown} guarantees that $\upsilon_n\sim 1$ if $n>n_{(\epsilon)}$.\smallskip

Now, in what follows, we will consider a small $\epsilon$--dependent
interval $I_\epsilon\subset [-\pi,\pi)$ centered at $\theta=0$ and will control how each of the factors $\cal{A}$
and $(\cal{BC})$ deviates from constants on $I_\epsilon$. The size of $I_\epsilon$ will be specified later, it will not depend on $n$ if $n>n_{(\epsilon)}$.

By Lemma \ref{lemma2} and the choice of $q_n$,
\begin{equation}\label{blue7}
|\cal{A}-1|\lesssim \epsilon
\end{equation}
uniformly in $\theta\in [-\pi,\pi)$ and $n>n(\epsilon)$.

For $\cal{B}$, we can use \eqref{defan} to write
\[
\cal{B}=|2+\overline{h}_n(1-\widetilde F)|^2=\left|2\left(1-\frac{\overline
{h}_n}{h_n}\right)+\overline{h}_n\right|^2\lesssim \epsilon^2+|h_n|^2\sim
\epsilon^2+n^{-\epsilon}+|\theta|^\epsilon
\]
due to estimates on $h_n$ obtained in Lemma
\ref{lemma1}), provided that $n>n_{(\epsilon)}$. 

Now, we choose $I_\epsilon$ such that $\max_{\theta\in I_\epsilon}|\theta|^\epsilon<\epsilon^2$. Then, we have
\[
\cal{B}\lesssim \epsilon^2
\]
for $\theta\in I_\epsilon$, $n>n_{(\epsilon)}$.

Recall the definition of $\cal{C}$ and consider
\[
\cal{J}:=\frac{2+h_n(1+\widetilde F)}{2+\overline{h}_n(1-\widetilde
F)}\,.
\]
Then, we have
\begin{equation}\label{blue6}
\cal{BC}=\cal{B}(1+|\cal{J}|^2+2\Re(\xi
\bar{\cal{J}}))=\cal{B}+\cal{B}|\cal{J}|^2+2\cal{B}\Re(\xi \overline{\cal{J}})\,.
\end{equation}
Assume that $\theta\in I_\epsilon$ and $n>n_{(\epsilon)}$. Then, the first term is at most $C\epsilon^2$. For the third one, we use the formula for $\cal{B}$ and \eqref{defan} to get
\[
|2\cal{B}\Re(\xi \overline{\cal{J}})|\lesssim \sqrt{\cal{B}}|2+h_n(1+\widetilde F)|\lesssim \epsilon |2+h_n(1+2h_n^{-1})|\lesssim \epsilon\,,
\]
where at the last step we used an estimate for $h_n$ from Lemma \ref{lemma1}.
By \eqref{defan}, the second term in \eqref{blue6} can be rewritten as
\[
|2+h_n(1+\widetilde F)|^2=
|4+h_n|^2=16+8\Re h_n+|h_n|^2\,.
\]
Notice that, by the choice of $I_n$, 
\[
|8\Re h_n+|h_n|^2|\lesssim \epsilon\,,
\]
if $\theta\in I_\epsilon$ and $n>n_{(\epsilon)}$. To summarize, we
have that $\cal{A}$ deviates from $1$ by at most $C\epsilon$ and $\cal{B}\cal{C}$ deviates from $16$ by at most $C\epsilon$, provided that $\theta\in I_\epsilon$ and $n>n_{(\epsilon)}$. Thus,
\[
\sigma'(\theta)=\omega_n\left(1+O(\epsilon)\right)\,,
\]
if $\theta \in I_\epsilon$ and $n>n_{(\epsilon)}$. The positive parameter
$\omega_n$ depends on $n$ only and $\omega_n\sim 1$.\medskip

To apply Lemma \ref{obrez1} later, we have to check that $\sigma'$ satisfies the ``global lower and upper bounds'', i.e., we
 need to verify that
\[
 \cal{ABC}\sim 1\,,
\]
if $\theta\in \mathbb{T}$ and $n>n_{(\epsilon)}$. Indeed,  for $\cal{A}$, we use \eqref{blue7}. Then, to control $\cal{BC}$, we argue differently:
\begin{equation}\label{blue8}
\cal{BC}=\left|4+h_n+\xi\left( \overline{h}_n+2\left( 1-\frac{\overline{h}_n}{h_n}  \right)   \right)\right|^2\,.
\end{equation}
It is clear from Lemma \ref{lemma1} that $\cal{BC}\lesssim 1$. For the lower bound, we can use
an estimate (see again Lemma \ref{lemma1})
\[
\left| 1-\frac{\overline{h}_n}{h_n}  \right|\lesssim \epsilon
\]
to get
\[
4+h_n+\xi\left( \overline{h}_n+2\left( 1-\frac{\overline{h}_n}{h_n}  \right)   \right)=4+h_n+O(\epsilon)+\xi\left(\frac{\overline{h}_n}{h_n}-1\right)h_n+\xi h_n
\]
\[
=4+(\xi+1)h_n+O(\epsilon)\,.
\]
In these estimates, $O(\epsilon)$ is a shorthand for a function, whose absolute value is bounded by $C\epsilon$ for all $\theta\in [-\pi,\pi)$ and $n>n_{(\epsilon)}$.

Since $\arg(1+\xi)\in [-\pi/2,\pi/2]$ and $|\arg h_n|<C\epsilon$, we can say that there is $\epsilon_0>0$ such that
\[
|4+(\xi+1)h_n+O(\epsilon)|\gtrsim 1\,,
\]
provided that $\epsilon\in (0,\epsilon_0)$ and $\theta\in [-\pi,\pi), n>n_{(\epsilon)}$. Therefore, we  have
$
\sigma'\sim 1
$
for $\epsilon\in (0,\epsilon_0),n>n_{(\epsilon)}, \theta\in [-\pi,\pi)$.

\bigskip

To summarize, we showed that there is $\epsilon_0>0$ such that for every $\epsilon\in (0,\epsilon_0)$, there is $n_{(\epsilon)}\in \mathbb{N}$ and $I_\epsilon$, centered at the origin, such that for every $n>n_{(\epsilon)}$ there is a weight $\sigma'$ (density of absolutely continuous measure $\sigma$) for which 
\begin{itemize}
\item  $|\phi_{2n}(1,\sigma)|\gtrsim n^{\epsilon/2}$,

\item $\sigma'(\theta)\sim 1$ for $\theta\in [-\pi,\pi)$,

\item $\sigma'(\theta)=\omega_n(1+O(\epsilon))$ if $\theta\in I_\epsilon$, and $\omega_n\sim 1$.
\end{itemize}
Notice carefully that $\sigma$ is also $n$-dependent in this construction. The case of polynomials of odd degree can be handled by a minor modification of the argument (by working in $\mathbb{P}_{2n+1}$).\smallskip

Finally, we are in a position to use localization principle in Appendix B to produce a new weight whose  deviation from  the constant is smaller than $C\epsilon$ over the whole $\mathbb{T}$ and the value of the corresponding orthogonal polynomial at point $z=1$ is still large. To that end, consider $w_1$:
\[
w_1:=\left\{
\begin{array}{cc}
1, & \theta\notin I_\epsilon  \,, \\
\frac{\displaystyle \sigma'}{\displaystyle \min_{I_\epsilon}
\sigma'},& \theta\in I_\epsilon\,.
\end{array}\right.
\]
We have  $0\le w_1-1\lesssim \epsilon$ over $\mathbb{T}$.
Lemma \ref{obrez1} and \eqref{scale11} gives
\[
|\phi_{2n}(1,w_1)|>C_{(\epsilon)}|\phi_{2n}(1,\sigma)|>C_{(\epsilon)}n^{\epsilon/2}\,,
\]
if $n>n_{(\epsilon)}$.
The analogous estimate for the monic polynomials is true due to \eqref{fact12}. This proves \eqref{contra1} for even $n$, the case of odd $n$ can be handled similarly.\bigskip

\subsection{The case of large deviation, estimate \eqref{contra2}}

Let us introduce a parameter $\alpha\in (1/2,1)$ and define $\tau:=1-\alpha$. To clarify the meaning of these parameters, we now mention that  $T$ in \eqref{contra2} will be related to $\tau$ through an estimate $T\sim \tau^{-4}$. Notice that \eqref{contra2} will follow, if we show that there is $T_0>2$ such that for every $T>T_0$ there is $n_{(T)}$ so that 
\begin{equation}\label{contra27}
\sup_{1\le w\le T} \|\Phi_n(e^{i\theta},w)\|_\infty>C^{(2)}_{(T)}n^{\frac 12-C_6T^{-1/4}}
\end{equation}
for all $n>n_{(T)}$. In other words, we can assume that both $T$ and $n$ are ``large''. \smallskip

 Similarly to the
previous subsection, we apply Lemma \ref{decop}. In this Lemma, we take $2n$ instead of $n$ and the $(\ast)$ operation will act on $\mathbb{P}_{2n}$. The case of odd $n$ is similar.\smallskip

{\bf (a).} We define the function $H_n$ by
\begin{equation}\label{grey1}
 H_n= 2(1-e^{i\theta})^{\alpha}\ast \cal{K}_{[n/2]}\,,
\end{equation}
where $\cal{K}_l$ is Jackson kernel, i.e.,
\[
\cal{K}_l=c_l\cal{F}_l^2\sim l^{-3}\frac{\sin^4(lx/2)}{\sin^4(x/2)}\,.
\]
Recall that $\cal{K}_n\ge 0$, $\deg \cal{K}_n\le 2n-2$, and
$\|\cal{K}_n\|_{L^1[-\pi,\pi]}=1$.

We have $\Re H_n(z)>0$ for $z\in \mathbb{T}$ so $H_n$ has no zeros in $\overline{\mathbb{D}}$. Moreover, $H_n\in \mathbb{P}_{n-2}$ and 
\begin{equation}\label{aher}
H_n(0)=2\,.
\end{equation}
The estimates on $H_n$ and its limiting behavior when $n\to\infty$ are discussed in Appendix D.\bigskip

{\bf (b).} The function $\widetilde F$ is chosen by
\begin{equation}\label{nash-vybor}
\widetilde F:= 2H_n^{-1}\,.
\end{equation}


 We notice that, $\Re \widetilde F>0$ over
$\overline{\mathbb{D}}$ and $\Re \widetilde F$ is smooth on
$\mathbb{T}$. Due to \eqref{aher}, we also have $\widetilde F(0)=1$. Thus the
normalization condition in Lemma \ref{decop}
\[
\int_{-\pi}^\pi \Re\widetilde F d\theta=2\pi
\]
is satisfied.\bigskip

{\bf (c).} As in the case of ``small deviations'' we will define the polynomial $\phi_{2n}$ by specifying $\phi_{2n}^*$ first. Then, we will let $\phi_{2n}=(\phi_{2n}^*)^*$. Take $\phi_{2n}^*$ as
\[
\phi_{2n}^*(z):=\beta_n(Q_n+Q^*_n+Q_nH_n)\,,
\]
where $Q_n$ is defined as
\begin{equation}\label{grey6}
Q_n:=(1-z)^{-\alpha/2}\ast \cal{F}_n\,.
\end{equation}
The positive parameter $\beta_n$ is chosen to make sure that we have
 \begin{equation}\label{trebovanie1}
 \|(\phi_n^*)^{-1}\|_{L^2(\mathbb{T})}^2= \|\phi_n^{-1}\|_{L^2(\mathbb{T})}^2=2\pi\,,
  \end{equation}
  as required in \eqref{norma}.
 Notice that $Q_n\in \mathbb{P}_{n-1}$ and $\Re Q_n>0$ in $\overline{\mathbb{D}}$.\smallskip
 
 Since $(\ast)$ acts on $\mathbb{P}_{2n}$, $\deg (Q_n+Q^*_n+Q_nH_n)\le 2n$. Moreover, 
 \[
 Q_n(0)+Q_n^*(0)+Q_n(0)H_n(0)=3Q_n(0)>0\,.
 \]
 Therefore, $\deg \phi_{2n}=2n$ as required in Lemma \ref{decop}.\smallskip

  The absence of zeros of $\phi_{2n}^*$ in the unit disk can be proved as  in \eqref{bez-nuley}.\smallskip

Next, we proceed with estimating $\beta_n$.  
For the lower bound, we can follow \eqref{ocenka-sni} to write
\[
|Q_n+Q_n^*+Q_nH_n|^2\ge |Q_n|^2(\Re H_n)^2\gtrsim
\left\{
\begin{array}{cc}
\tau^2 n^{-\alpha}, & |\theta|<n^{-1}\,,\\
\tau^2|\theta|^{\alpha}, & |\theta|>n^{-1}\,,
\end{array}
\right.
\]
and the last inequality is due to \eqref{outside1}, \eqref{grey2}, and Lemma \ref{blue77}. This is true, provided that $\tau\in (0,\tau_0)$ and $n>n_{(\tau)}$.  For the upper bound, 
\[
|Q_n+Q_n^*+Q_nH_n|^2\lesssim |Q_n|^2\lesssim \left\{
\begin{array}{cc}
n^{\alpha}, & |\theta|<n^{-1}\,,  \\
|\theta|^{-\alpha}, & |\theta|>n^{-1}\,,
\end{array}
\right.
\]
again by Lemma \ref{grey4} and Lemma \ref{blue77}. From \eqref{trebovanie1},
\[
\beta_n^2\sim \int_{-\pi}^\pi |Q_n+Q_n^*+Q_nH_n|^{-2}d\theta\,,
\]
so
\[
\beta_n^2\lesssim \tau^{-2} n^{-\tau}+\int_{1/n}^1 \frac{1}{\tau^{2} \theta^{\alpha}}d\theta\lesssim \tau^{-3}
\]
for $\tau\in (0,\tau_0)$ and $n>n_{(\tau)}$. For the lower bound, we can write
\[
\beta^2_n\gtrsim n^{-\tau}+\int_{1/n}^1 |\theta|^{\alpha}d\theta\gtrsim 1\,.
\]
To summarize, we get an estimate
\begin{equation}\label{blue88}
1\lesssim \beta^2_n\lesssim \tau^{-3}\,,
\end{equation}
which holds true if $\tau\in (0,\tau_0)$ and $n>n_{(\tau)}$.\bigskip

Therefore,  for the value at
$z=1$, we get
\[
|\phi_{2n}(1)|=|\phi_{2n}^*(1)|\gtrsim C_{(\tau)} n^{\alpha/2}
\]
by Lemma \ref{blue77}, provided that $n>n_{(\tau)}$. This gives the growth required in \eqref{contra2} in terms of $\alpha$. 

We apply Lemma \ref{decop}. The identity \eqref{mp} provides the formula for $\sigma'$, a weight of orthogonality for our $\phi_{2n}(z)$.

We can rewrite \eqref{mp} as follows:
\begin{equation}\label{blue12}
\sigma'^{-1}=\upsilon_n \cal{ED}\,,
\end{equation}
where $\upsilon_n=\pi\beta_n^2/2$, and (due to \eqref{nash-vybor}),
\[
\cal{E}:=\frac{|Q_n|^2}{\Re \widetilde F},\,
\cal{D}:=\left|4+H_n+\xi\left( \overline{H}_n+2\left( 1-\frac{\overline{H}_n}{H_n}  \right)   \right)\right|^2,\,\xi=Q_n^*/Q_n=e^{i(2n\theta-2\Theta)}, \, \Theta=\arg Q_n\,.
\]
\smallskip

We will first obtain the lower and upper bounds for $\cal{E}$ and $\cal{D}$  over some interval $I_\tau$, centered at $\theta=0$. Then, we will handle all $\mathbb{T}$.\smallskip

Notice that
\[
\Re \widetilde  F=2\frac{\Re H_n}{|H_n|^2}\,.
\]
Consider $\cal{E}$. From Lemma \ref{grey4} and Lemma \ref{blue77}, we get
\begin{equation}\label{grey10}
 \cal{E}\sim  \tau^{-1} 
\end{equation}
if $\tau\in (0,\tau_0)$ and $|\theta|\in (n^{-1},\tau^2), n>n_{(\tau)}$.

Now, consider $\cal{D}$. For the upper bound, we obtain
\begin{equation}\label{dlyad}
\cal{D}\lesssim 1\,,
\end{equation}
if $\tau\in (0,\tau_0)$, and $\theta\in [-\pi,\pi), n>n_{(\tau)}$, as follows from \eqref{grey3}.
For the lower estimate, we notice that $|\xi|=1$ and write
\[
\left|4+H_n+\xi\left( \overline{H}_n+2\left( 1-\frac{\overline{H}_n}{H_n}  \right)   \right)\right|\ge 4-2\left|1-\frac{\overline{H}_n}{H_n}\right|-2|H_n|
\]
by triangle inequality.

If we define $u:=\arg H_n$, then 
\begin{equation}\label{sverha}
4-2\left|1-\frac{\overline{H}_n}{H_n}\right|=2\sqrt{2}(\sqrt 2-(1-\cos (2u))^{1/2})\gtrsim \tau^2\,,
\end{equation}
as follows from  \eqref{orange1}. From \eqref{grey3}, we then get
\[
 \cal{D}\ge (C_1\tau^2-C_2(|\theta|^\alpha+n^{-\alpha}))^2 \,,
\]
if $\tau\in (0,\tau_0), \theta\in (-\tau^2,\tau^2)$, and $n>n_{(\tau)}$. Now, we choose $I_\tau$ such that
\begin{itemize}
\item $I_\tau\subset [-\tau^2,\tau^2]$,
\item $C_2\max_{\theta\in I_\tau}|\theta|^\alpha<C_1\tau^2/2$.
\end{itemize}
That leads to an estimate
\begin{equation}\label{dlyad1}
\cal D\gtrsim \tau^4\,,
\end{equation}
which holds if  $\tau\in (0,\tau_0),  \theta\in I_\tau, n>n_{(\tau)}$.\smallskip

We collect  the estimates for $\cal{E}$ and $\cal{D}$ to get
\begin{equation}\label{put1}
\tau^3 \lesssim \cal{ED}\lesssim \tau^{-1}
\end{equation}
if $1/n<|\theta|, \theta\in I_\tau$.

Now, consider $\theta\in [-1/n,1/n]$ and write
\[
 4-2\left|1-\frac{\overline{H}_n}{H_n}\right|-2|H_n|\le 
\left|4+H_n+\xi\left( \overline{H}_n+2\left( 1-\frac{\overline{H}_n}{H_n}  \right)   \right)\right|\le 4+2\left|1-\frac{\overline{H}_n}{H_n}\right|+2|H_n|\,.
\]
Since $|H_n|\lesssim n^{-\alpha}$, the estimate \eqref{sverha} gives
\[
 4-2\left|1-\frac{\overline{H}_n}{H_n}\right|\lesssim  
\left|4+H_n+\xi\left( \overline{H}_n+2\left( 1-\frac{\overline{H}_n}{H_n}  \right)   \right)\right|\lesssim 1
\]
if $n>n_{(\tau)}$.
Taking the square of both sides and multiplying by $\cal{E}$ gives
\[
\frac{|Q_n|^2}{\Re H_n} ||H_n|-|\Im H_n||^2\lesssim \cal{ED}\lesssim \frac{|Q_n|^2|H_n|^2}{\Re H_n}
\]
and
\[
 \frac{n^{-2\alpha}\tau^3}{|H_n|^2} \sim n^{2\alpha}\tau^{-1} \frac{(\Re H_n)^4}{|H_n|^2}  \lesssim \cal{ED}\lesssim |H_n|^2n^{2\alpha}\tau^{-1}
\]
after Lemma \ref{grey4} and Lemma \ref{blue77} are applied. We apply Lemma \ref{grey4} to bound $|H_n|$ on the interval $[-1/n,1/n]$. Substituting $|H_n|\sim n^\tau|\theta|+\tau n^{-\alpha}$ and taking the minimum of the left hand side and maximum of the right hand side gives
\[
\tau^3\lesssim \cal{ED}\lesssim \tau^{-1}
\]
which matches \eqref{put1}.
Thus, to summarize, we get
\begin{equation}\label{sweet1}
\tau^3\lesssim \cal{ED}\lesssim \tau^{-1}
\end{equation}
if $\theta\in I_\tau$ and $n>n_{(\tau)}$.

 Next, we will prove the lower and upper estimates for $\cal{ED}$ which will be true for all of $\mathbb{T}$. We will need it to apply the localization principle. Notice that
\[
H_n\to 2(1-e^{i\theta})^\alpha, \, Q_n\to (1-e^{i\theta})^{-\alpha/2},\, \widetilde F\to (1-e^{i\theta})^{-\alpha}
\]
if $n\to\infty$ and convergence is uniform in $\theta\notin I_\tau$. Therefore,
\[
\cal{E}\to \frac{|1-e^{i\theta}|^{-\alpha}}{\Re ((1-e^{i\theta})^{-\alpha})}, \quad n\to\infty
\]
and convergence is uniform over $\theta\notin I_\tau$. So, away from $I_\tau$, the upper and lower bounds for the limiting function imply that $1\lesssim \cal{E}\lesssim \tau^{-1}$.
The upper estimate for $\cal D$ is easy: $|\cal{D}|\lesssim 1$ for all $\theta\in \mathbb{T}$. To obtain the lower bound, we recall again that $H_n\to 2(1-e^{i\theta})^\alpha$. In the formula for $\cal{D}$, we replace $H_n$ by its limiting value and consider the following function
\[
d_\delta:=\inf_{|\xi|=1,|z|=1, |1-z|>\delta}
\left|2+(1-z)^\alpha+\xi\left( \overline{(1-z)^\alpha}+\left( 1-\frac{\overline{(1-z)^\alpha}}{(1-z)^\alpha}  \right)   \right)\right|\,.
\]
Let us show that $d_\delta>0$ for positive $\delta$. For shorthand, introduce $t:=(1-z)^\alpha$ and write
\[
\left|2+t+\xi \frac{|t|^2+2i\Im t}{t}\right|=\left|\frac{2t+t^2+\xi (|t|^2+2i\Im t)}{t}\right|\ge \frac{|2t+t^2|-||t|^2+2i\Im t|}{|t|}=
\]
\[
 |t+2|- \left| |t|+2i\frac{\Im t}{|t|}\right|\ge \frac{|t|^2+4\Re t+4-|t|^2-4\sin^2(\arg t) }{ |t+2|+ \left| |t|+2i\frac{\Im t}{|t|} \right|  }\ge \frac{4\Re t}{ |t+2|+ \left| |t|+2i\frac{\Im t}{|t|} \right|  }\,.
\]
Since $|z|=1$ and $|1-z|\ge \delta$, we have $\Re t\gtrsim \delta^\alpha\tau^2$.

This implies
\[
1\gtrsim \cal{D}>C_{(\tau)}\,,
\]
if $\theta\notin I_\tau$ and $n>n_{(\tau)}$. Combining the bounds, we get
\[
c^{(1)}_{(\tau)}<\cal{ED}<c^{(2)}_{(\tau)}
\]
for $\theta\in [-\pi,\pi)$ and $n>n_{(\tau)}$. Considering the estimates on $\beta_n$ and formula \eqref{blue12}, we obtain 
\begin{equation}\label{bolshaya12}
C^{(1)}_{(\tau)}<\sigma'(\theta)<C^{(2)}_{(\tau)}
\end{equation}
for $\theta\in [-\pi,\pi)$.

\bigskip

To summarize, we proved that there is $\tau_0>0$ such that for all $\tau\in (0,\tau_0)$, there is 
$n_{(\tau)}\in \mathbb{N}$ and interval $I_\tau$ centered at $\theta=0$ so that for every $n>n_{(\tau)}$ there a weight $\sigma'$ (defining the absolutely continuous measure $\sigma$) such that

\begin{itemize}
\item  $|\phi_{2n}(1,\sigma)|\gtrsim C_{(\tau)}n^{\alpha/2}$.

\item The bound holds 
\begin{equation}\label{bolshaya}
C^{(1)}_{(\tau)}<\sigma'(\theta)<C^{(2)}_{(\tau)}
\end{equation}
for $\theta\in [-\pi,\pi)$.

\item $ \upsilon_n^{-1}\tau\lesssim  \sigma'(\theta) \lesssim \upsilon_n^{-1}\tau^{-3}$ for $\theta\in I_\tau$.
\end{itemize}
The last property follows from representation $\sigma'=\upsilon_n^{-1}(\cal{ED})^{-1}$ and \eqref{sweet1}.\bigskip

 It is only left to use the localization principle. Since we have \eqref{bolshaya}, Lemma \ref{obrez1} is applicable.
Consider $w_1$:
\[
w_1=\left\{
\begin{array}{cc}
1, & \theta\notin I_\tau,\\
\frac{\displaystyle \sigma'}{\displaystyle \min_{\theta\in I_\tau}
\sigma'},& \theta\in I_\tau\,.
\end{array}\right.
\]
We have  $1\le w_1\le T$ uniformly over $\mathbb{T}$ and $T:=\max_{I_\tau} \sigma'/\min_{I_\tau} \sigma'\lesssim \tau^{-4}$.
Lemma \ref{obrez1} and \eqref{scale11} give
\[
|\phi_{2n}(1,w_1)|>C^{(3)}_{(\tau)}|\phi_{2n}(1,\sigma)|>C^{(4)}_{(\tau)}n^{\alpha/2}, \quad
n>n_{(\tau)}\,.
\]
The estimate \eqref{contra2} now follows from \eqref{fact11} and \eqref{scale11}.
The case of odd $n$ can be handled similarly.

\end{proof}

{\bf Remark.} We can reformulate the Theorem \ref{mainn} in a different way.
Let us introduce two functions defined for $t\ge 0$:
\[
\overline\nu(t):=\limsup_{n\to\infty} \frac{   \log \sup_{1\le w\le 1+t} \|\Phi_n(z,w)\|_\infty}{\log n}
\]
and
\[
\underline\nu(t):=\liminf_{n\to\infty} \frac{   \log \sup_{1\le w\le 1+t} \|\Phi_n(z,w)\|_\infty}{\log n}\,.
\]
Then the following two estimates are immediate from combining Theorem~\ref{fodd} and Theorem~\ref{mainn}:
\[
\frac 12-\frac{c_1}{ t^{1/4}}\le \underline{\nu}(t) \le \overline{\nu}(t) \le \frac 12-\frac{c_2}{t}, \quad t>2
\]
and
\[
c_3(t-1)\le \underline{\nu}(t)\le \overline{\nu}(t)\le c_4(t-1), \quad 1<t<2\,.
\]
where $c_1,c_2,c_3,c_4$ are some positive absolute constants.
\bigskip

The Theorem \ref{mainn} can now  be used to show that for some $w$ that satisfies $w,w^{-1}\in L^\infty(\mathbb{T})$  the sequence $\|\phi_n(e^{i\theta},w)\|_{L^\infty(\mathbb{T})}$ can grow in $n$ at a certain rate.

\begin{theorem}
Given $\epsilon\in (0,1]$, there is a weight $w$ and a subsequence $\{k_n\}\subset \mathbb{N}$ that satisfy $1\le w\le 1+\epsilon$ and
\[
 \|\phi_{k_n}(e^{i\theta},w)\|_{L^\infty(\mathbb{T})}\gtrsim  k_n^{c_5\epsilon}\,.
\]
If $T\ge 2$, then there is a weight $w$ and a subsequence $\{k_n\}\subset \mathbb{N}$ that satisfy $1\le w\le T$ and
\[
\|\phi_{k_n}(e^{i\theta},w)\|_{L^\infty(\mathbb{T})}\gtrsim k_n^{1/2-c_6T^{-1/6}}\,.
\]
Here $c_5$ and $c_6$ denote positive absolute constants.
\end{theorem}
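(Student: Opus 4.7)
The plan is to build the single weight $w$ by a condensation-of-singularities argument: I would combine countably many of the ``bad'' weights supplied by Theorem~\ref{mainn} into a single one, using the localization principle to preserve the lower bound for each $\Phi_{k_n}$. The two claims are proved the same way; I sketch the first, noting that for the second one would use \eqref{contra2} in place of \eqref{contra1} and accept a small loss of exponent (from $T^{-1/4}$ to $T^{-1/6}$) coming from the gluing step.

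First I would fix a rapidly increasing sequence $\{k_n\}\subset\mathbb{N}$ and pairwise disjoint arcs $I_n\subset\mathbb{T}$ centered at points $\theta_n$, with $|I_n|\to 0$ and $k_n|I_n|\to\infty$. For each $n$ I would rerun the construction in the proof of Theorem~\ref{mainn} for estimate \eqref{contra1}, but localized to the arc $I_n$ in place of the fixed arc $I_\epsilon\ni 0$: the building blocks $h_n$, $q_n$, $\widetilde F$, $\phi_{2n}^*$ are translated by $\theta_n$ and rescaled in frequency by the factor $|I_\epsilon|/|I_n|$. The asymptotic estimates of Appendix~C, which concern the behavior near the singularity, should survive this rescaling at the cost of inflating the degree threshold by the same factor. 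Choosing $k_n$ large enough relative to $1/|I_n|$, this procedure delivers a weight $w^{(n)}$ such that $w^{(n)}\equiv 1$ off $I_n$, $1\le w^{(n)}\le 1+C\epsilon$ on $I_n$, and $\|\Phi_{k_n}(\cdot,w^{(n)})\|_\infty\gtrsim k_n^{c\epsilon}$ for some absolute $c>0$.

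Next I would set $w:=1+\sum_{n\ge 1}(w^{(n)}-1)$. Since the $I_n$ are pairwise disjoint, at each $\theta$ at most one summand is non-zero, so $1\le w\le 1+C\epsilon$ globally; replacing $\epsilon$ by $\epsilon/C$ at the start gives $1\le w\le 1+\epsilon$. On $I_n$ the weights $w$ and $w^{(n)}$ coincide, and elsewhere both lie in $[1,1+\epsilon]$, so the localization principle (Lemma~\ref{obrez1}) yields a lower bound on $\|\Phi_{k_n}(\cdot,w)\|_\infty$ by an $\epsilon$-dependent constant times $\|\Phi_{k_n}(\cdot,w^{(n)})\|_\infty$. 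I would then absorb this constant into a slightly smaller exponent (as in the remark after Theorem~\ref{mainn}) and transfer the bound from monic to orthonormal polynomials via \eqref{fact12}.

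The hard part is verifying that the construction behind Theorem~\ref{mainn} survives the compression of the fixed arc $I_\epsilon$ to a much smaller interval $I_n$. In the proof of Theorem~\ref{mainn}, the threshold $n_{(\epsilon)}$ and the constant $C^{(1)}_{(\epsilon)}$ depend only on $\epsilon$; to rerun the argument on a shrinking arc one must track precisely how $n_{(\epsilon)}$ scales, and check that the sharp estimates of Appendices~A--D (in particular the pointwise behavior of $h_n$, $Q_n$, and $\Re\widetilde F\ast\cal F_n$) remain uniform under the rescaling $\theta\mapsto(|I_\epsilon|/|I_n|)(\theta-\theta_n)$. Once this scaled version of Theorem~\ref{mainn} is established, the rest of the argument reduces to bookkeeping with the localization principle.
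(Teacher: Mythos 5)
Your overall strategy --- disjoint arcs $I_n$, one ``bad'' weight per arc, gluing, and the localization principle to recover the lower bound for each individual $\phi_{k_n}$ --- is the same as the paper's. But your proposal contains a genuine gap, and it is a self-inflicted one. You insist that each building block $w^{(n)}$ satisfy $w^{(n)}\equiv 1$ off the shrinking arc $I_n$, and to achieve this you propose to rerun the whole construction behind Theorem \ref{mainn} after translating to $\theta_n$ and rescaling in frequency by $|I_\epsilon|/|I_n|$. You then correctly flag the verification that the estimates of Appendices C and D survive this rescaling as ``the hard part,'' but you do not carry it out; as written, the scaled version of Theorem \ref{mainn} is an unproved assertion, and not a routine one --- the interplay between the degree of the Fej\'er/Jackson kernels and the location and strength of the singularity is precisely what all the pointwise estimates in the appendices are about, and uniformity in the extra parameter $|I_n|\to 0$ is nowhere established.

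The paper avoids this entirely, because Lemma \ref{obrez1} only requires the two weights to \emph{coincide on an arc} around the evaluation point and to be bounded above and below globally; it does not require either weight to be trivial outside that arc. So one takes the weight $w_n$ produced by Theorem \ref{mainn} verbatim (its bump sits on the fixed arc $I_\epsilon$), translates it to $\theta_n$, and defines $w$ to agree with this translate \emph{only on} $I_n$ and to equal $1$ off $\bigcup_n I_n$. Then $w$ and $w_n(\cdot-\theta_n)$ agree on an arc of length $\delta_n$ centered at $\theta_n$, both lie in $[1,1+\epsilon]$, and \eqref{eq1} bounds $|\phi_{k_n}(e^{i\theta_n},w)|$ from below by $|\phi_{k_n}(1,w_n)|$ times a constant depending on $\delta_n$; that loss is absorbed by coupling $k_n$ to $\delta_n$ so that it is dominated by $k_n^{C_5\epsilon/2}$ (the role of condition \eqref{gena1}), which then halves the exponent rather than destroying it. Two further points where your write-up is imprecise: the localization principle compares values at the \emph{center} of the arc of agreement, not sup norms, so you must use that the construction makes the polynomial large at that specific point; and the constant in your gluing step depends on $\delta_n$, not only on $\epsilon$, so ``absorbing it into a slightly smaller exponent'' requires the quantitative coupling of $k_n$ to $\delta_n$ just described --- the condition $k_n|I_n|\to\infty$ you impose is not sufficient by itself.
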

\begin{proof} We give the proof only for the first case when the deviation of the weight from the constant is small. The other case of large deviation can be handled similarly.

We consider the sequence of disjoint arcs $\{I_n\}$ in $\mathbb{T}$ that satisfies the following condition
\[
\sum_{n=1}^\infty \delta_n <2\pi, \quad |I_n|=\delta_n\,.
\]
Denote the centers of these arcs by $\{e^{i\theta_n}\}$. Take a monotonically increasing sequence $\{k_n\}\subset \mathbb{N}$ so that
\begin{equation}\label{gena1}
C^{(1)}_{(\epsilon)}\delta_n^{-1}k_n^{C_5\epsilon}>k_n^{C_5\epsilon/2}, \quad n=1,2,\ldots
\end{equation}
and  $C^{(1)}_{(\epsilon)},C_5$ are taken from \eqref{contra1}. Then, by Theorem \ref{mainn}, we have a sequence of weights $\{w_n\}$  so that
\[
1\le w_n\le 1+\epsilon
\]
and
\begin{equation}\label{gena2}
|\phi_{k_n}(1,w_n)|>C^{(1)}_{(\epsilon)}k_n^{C_5\epsilon}\,.
\end{equation}
Now, we define  weight $w$ which is equal to $1$ away from $\bigcup_{n=1}^\infty I_n$ and
\[
w(\theta)=w_n(\theta-\theta_n)
\]
on each $I_n$ (we recall that $e^{i\theta_n}$ is the center of $I_n$). We clearly have $1\le w\le 1+\epsilon$.
The localization principle \eqref{eq1} now implies that
\[
|\phi_{k_n}(e^{i\theta_n},w)|\gtrsim \delta_n^{-1}|\phi_{k_n}(1,w_n)|\,.
\]
The application of \eqref{gena1} and \eqref{gena2} now finishes the proof.

\end{proof}

\bigskip
\section{Appendix A: Method used to prove Theorem \ref{T3-i} }

In this section we explain an idea used in the proof of Theorem
\ref{T3-i}. This material is taken from \cite{den-jat}. We start
with recalling some basic facts about the polynomial orthogonal on
the unit circle. With any probability measure $\mu$, which is
defined on the unit circle and have infinitely many growth points,
one can associate the orthonormal polynomials of the first and
second kind, $\{\phi_n\}$ and $\{\psi_n\}$, respectively.
$\{\phi_n\}$ satisfy the following recursions \mbox{(\cite{sim1}, p. 57)}
with Schur parameters $\{\gamma_n\}$:
\begin{equation}\left\{\begin{array}{cc}
\phi_{n+1}=\rho_n^{-1}(z\phi_n-\overline\gamma_n\phi_n^*),&\phi_0=1\,,\\
\phi_{n+1}^*=\rho_n^{-1}(\phi_n^*-\gamma_n z\phi_n),&\phi^*_0=1\,,
\end{array}\right.
\label{srecurs}
\end{equation}
and $\{\psi_n\}$ satisfy the same recursion but with Schur
parameters $\{-\gamma_n\}$, i.e.,
\begin{equation}\label{secon}\left\{\begin{array}{cc}
\psi_{n+1}=\rho_n^{-1}(z\psi_n+\overline\gamma_n\psi_n^*),&\psi_0=1\,,\\
\psi_{n+1}^*=\rho_n^{-1}(\psi_n^*+\gamma_n z\psi_n),&\psi^*_0=1\,.
\end{array}\right.
\end{equation}
The coefficient $\rho_n$ is defined as
\[
\rho_n:=\sqrt{1-|\gamma_n|^2}\,.
\]
The following Bernstein-Szeg\H{o} approximation  is valid:
\begin{lemma}{\rm (\cite{5},\cite{sim1})}\, Suppose $d\mu$ is a probability measure and $\{\phi_j\}$ and $\{\psi_j\}$ are the
corresponding orthonormal polynomials of the first/second kind,
respectively. Then, for any $N$, the Caratheodory function
\[
F_N(z)=\frac{\psi_N^*(z)}{\phi_N^*(z)}=\int_{\mathbb T}
C(z,e^{i\theta})d\mu_N(\theta),\,\,{\rm where}\quad
d\mu_N(\theta)=\frac{d\theta}{2\pi|\phi_N(e^{i\theta})|^2}=
\frac{d\theta}{2\pi|\phi^*_N(e^{i\theta})|^2}
\]
has the first $N$ Taylor coefficients  identical to the Taylor
coefficients of the function
\[
F(z)=\int_{\mathbb T}C(z,e^{i\theta})d\mu(\theta)\,\,.
\]
In particular, the polynomials $\{\phi_j\}$ and $\{\psi_j\}$,
$j\!\le\!N$ are the orthonormal polynomials of the first/second kind
for the measure $d\mu_N$.
\end{lemma}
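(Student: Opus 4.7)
The lemma has two halves: first, the identification of $F_N(z):=\psi_N^*(z)/\phi_N^*(z)$ as the Caratheodory function of the Bernstein-Szegő measure $d\mu_N=d\theta/(2\pi|\phi_N|^2)$; second, the matching of the first $N$ Taylor coefficients of $F_N$ and $F$. My plan is to first prove a Szegő-Wronskian identity, use it to handle both halves, and then read off the corollary about the polynomials $\{\phi_j\}_{j\le N}$.

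Step~1 (Wronskian identity). I would prove
\[
\phi_N^*(z)\,\psi_N(z) + \phi_N(z)\,\psi_N^*(z) \;=\; 2z^N
\]
by induction on $N$. The base case $N{=}0$ reads $1{+}1{=}2$. For the inductive step I substitute \eqref{srecurs} and \eqref{secon} (recalling that $\{\psi_n\}$ uses parameters $-\gamma_n$) and expand: the mixed terms carrying $\bar\gamma_n\phi_n^*\psi_n^*$ cancel, the $\gamma_n z^2\phi_n\psi_n$ terms cancel, and what remains is $\rho_n^{-2}(1-|\gamma_n|^2)\,z\,(\phi_n^*\psi_n+\phi_n\psi_n^*)=z\cdot 2z^n=2z^{n+1}$.

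Step~2 (Caratheodory and measure identification). On $\mathbb{T}$, the $(\ast)$-identity \eqref{form12} gives $\overline{\phi_N^*(e^{i\theta})}=e^{-iN\theta}\phi_N(e^{i\theta})$ and similarly for $\psi_N$; combined with Step~1, this yields the boundary identity
\[
2\Re\!\left(\frac{\psi_N^*(e^{i\theta})}{\phi_N^*(e^{i\theta})}\right)
=\frac{\psi_N^*\overline{\phi_N^*}+\overline{\psi_N^*}\phi_N^*}{|\phi_N^*|^2}
=\frac{e^{-iN\theta}\cdot 2z^N}{|\phi_N^*|^2}
=\frac{2}{|\phi_N|^2}.
\]
Since $\phi_N^*$ has no zeros in $\overline{\mathbb{D}}$ (standard OPUC fact derivable from \eqref{srecurs} and Rouché), $F_N$ is analytic across $\mathbb{T}$ with nonnegative boundary real part, hence Caratheodory. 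Moreover $F_N(0)=\psi_N^*(0)/\phi_N^*(0)=1$, because both numerator and denominator equal the common leading coefficient $\prod_{j=0}^{N-1}\rho_j^{-1}$ of $\psi_N$ and $\phi_N$. Since $F_N(0)\in\mathbb{R}$, the Schwartz-Herglotz representation then forces $F_N(z)=\int_{\mathbb{T}}C(z,e^{i\theta})\,d\mu_N(\theta)$ exactly, with no residual imaginary constant.

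Step~3 (Moment matching). To show $F(z)-F_N(z)=O(z^N)$, I would establish a Geronimus-type identity
\[
\phi_N^*(z)\,F(z)-\psi_N^*(z)\;=\;2z^N\int_{\mathbb{T}}\frac{\overline{\phi_N(e^{i\theta})}}{e^{i\theta}-z}\,d\mu(\theta),
\]
by substituting the definition of $F$, expanding $(\xi+z)/(\xi-z)$ as a power series in $z/\xi$, and using the orthogonality $\int \phi_N\,\overline{P}\,d\mu=0$ for every $P\in\mathbb{P}_{N-1}$ to annihilate the low-order contributions (an inductively produced companion integral formula for $\psi_N^*$ from \eqref{secon} accounts for the subtraction). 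Dividing by $\phi_N^*$, which is nonzero at $z=0$, yields $F(z)-F_N(z)=O(z^N)$ and hence the first $N$ Taylor coefficients coincide. The ``in particular'' statement is then automatic: matching $N$ Taylor coefficients of the Caratheodory functions forces the Schur parameters $\gamma_0,\ldots,\gamma_{N-1}$ of $\mu$ and $\mu_N$ to agree, and \eqref{srecurs}--\eqref{secon} show that $\phi_j$ and $\psi_j$ for $j\le N$ are determined entirely by these parameters.

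The Wronskian and the Herglotz identification (Steps~1--2) are routine algebra. The main obstacle is the bookkeeping in Step~3: producing the right integral representation for $\psi_N^*$ and isolating the orthogonality cancellations that make $\phi_N^*F-\psi_N^*$ divisible by $z^N$. An equivalent route, which I could use as a fallback, is to verify directly by residue calculus---again using Step~1 to collapse the integrand---that $\phi_0,\ldots,\phi_N$ are orthonormal with respect to $d\mu_N$, from which moment matching follows by the Verblunsky parametrization.
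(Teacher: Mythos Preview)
The paper does not supply its own proof of this lemma; it is quoted as a known result with references to Geronimus and Simon. So there is nothing in the paper to compare your argument against line by line.

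That said, your sketch is the standard textbook route and is correct. The Wronskian identity in Step~1 is exactly the Christoffel--Darboux-type relation one finds in Simon's book, and the boundary computation in Step~2 is the usual way to identify $\Re F_N$ with the Bernstein--Szeg\H{o} density. For Step~3, the integral identity you aim for is a known consequence of the second-kind integral representation $\psi_n^*(z)=-\int C(z,\xi)\bigl(\phi_n^*(\xi)-\phi_n^*(z)\bigr)\,d\mu(\xi)$; combining it with $\phi_n^*(\xi)=\xi^n\overline{\phi_n(\xi)}$ on $\mathbb{T}$ and orthogonality gives $\phi_N^*F-\psi_N^*=O(z^{N+1})$ (in fact one order better than you claim, but $O(z^N)$ already suffices for the lemma as stated). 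Your fallback via residues and Verblunsky parametrization is also a legitimate alternative. Nothing is missing.
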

We also need the following Lemma which can be verified directly:
\begin{lemma}\label{vspomag}
The polynomial $P_n(z)$ of degree $n$ is the orthonormal polynomial
for a probability measure with infinitely many growth points if and
only if
\begin{itemize}
\item[1.] $P_n(z)$ has all $n$ zeroes inside $\mathbb D$ (counting the multiplicities).
\item[2.] The normalization conditions
\[
\int_\mathbb
T\frac{d\theta}{2\pi|P_n(e^{i\theta})|^2}=1~,\quad\operatorname{coeff}(P_n,n)>0
\]
are satisfied.\end{itemize}\end{lemma}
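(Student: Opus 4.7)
The plan is to upgrade the existence statement of Theorem \ref{mainn} (which, for each degree $n$ separately, produces some weight making the $n$-th orthonormal polynomial large at a single point) into a single weight $w$ with the same global constraint $1\le w\le 1+\epsilon$ (resp.\ $1\le w\le T$) whose orthonormal polynomials satisfy the claimed $L^\infty$ growth along a subsequence $\{k_n\}$. The mechanism is to paste together translates of the weights produced by Theorem \ref{mainn} on pairwise disjoint arcs of $\mathbb T$, and then recover growth at the centers of these arcs from the localization principle of Appendix~B.

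First, I would fix a sequence of pairwise disjoint arcs $I_n\subset\mathbb T$ with centers $e^{i\theta_n}$ and lengths $\delta_n=|I_n|$ satisfying $\sum_n\delta_n<2\pi$. For each $n$, Theorem \ref{mainn} supplies a weight $\widetilde w_n$ with $1\le\widetilde w_n\le 1+\epsilon$ and a degree $k_n$ (which we are free to choose as large as we wish) such that
\[
|\phi_{k_n}(1,\widetilde w_n)|\ge C^{(1)}_{(\epsilon)}\,k_n^{C_5\epsilon}.
\]
Define the global weight by
\[
w(\theta):=\begin{cases}\widetilde w_n(\theta-\theta_n), & \theta\in I_n,\\ 1, & \theta\notin\bigcup_n I_n.\end{cases}
\]
Then $1\le w\le 1+\epsilon$ on all of $\mathbb T$, so $w$ lies in the admissible class.

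The critical step is the localization principle from Appendix~B, which compares the orthonormal polynomial for the global weight $w$, evaluated at the center $e^{i\theta_n}$, to the orthonormal polynomial for the local weight $\widetilde w_n$ at $z=1$; concretely it yields a bound of the form
\[
|\phi_{k_n}(e^{i\theta_n},w)|\gtrsim \delta_n^{-1}\,|\phi_{k_n}(1,\widetilde w_n)|.
\]
To make the resulting estimate survive the $\delta_n^{-1}$ loss and still exhibit growth $\gtrsim k_n^{c_5\epsilon}$, I would choose the $k_n$ inductively, large enough relative to $\delta_n^{-1}$ so that, for instance, $C^{(1)}_{(\epsilon)}\delta_n^{-1}k_n^{C_5\epsilon}\ge k_n^{C_5\epsilon/2}$. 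This is always achievable since $\delta_n$ is fixed before $k_n$ is selected. Setting $c_5:=C_5/2$ delivers the first claim.

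The large-deviation case ($1\le w\le T$) is handled by the same construction, but using the second bound of Theorem~\ref{mainn}; the weakening of the exponent on $T$ from $T^{-1/4}$ to $T^{-1/6}$ provides precisely the slack needed to absorb the $\delta_n^{-1}$ factor (after solving an inequality analogous to the one above) while maintaining the global bound $w\le T$. The main obstacle is the correct invocation of the localization principle: one must verify that the translated local weights $\widetilde w_n(\,\cdot\,-\theta_n)$ can be spliced together without the orthogonality relations at different arcs interfering, which is exactly the content of Appendix~B and may be used as a black box.
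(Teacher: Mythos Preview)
Your proposal does not address Lemma~\ref{vspomag} at all. The lemma is a characterization of when a given polynomial $P_n$ of degree $n$ arises as the $n$-th orthonormal polynomial of some probability measure: the conditions are that all $n$ zeros lie inside $\mathbb{D}$, that the leading coefficient is positive, and that $\int_{\mathbb{T}}|P_n|^{-2}\,d\theta/(2\pi)=1$. What you have written instead is a proof sketch of the theorem appearing later in Section~3 (the one that upgrades the per-degree estimates of Theorem~\ref{mainn} to a single fixed weight via pasting and localization). That argument is essentially correct for \emph{that} theorem, but it has no bearing on the statement you were asked to prove.

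The paper's actual proof of Lemma~\ref{vspomag} is a one-line verification: if $P_n$ satisfies the listed conditions, then $d\mu_n:=(2\pi|P_n(e^{i\theta})|^{2})^{-1}d\theta$ is itself a probability measure (by the normalization), and the Bernstein--Szeg\H{o} theory makes $P_n$ its $n$-th orthonormal polynomial. Conversely, any orthonormal polynomial $\phi_n$ for a probability measure automatically has positive leading coefficient, all zeros in $\mathbb{D}$, and satisfies $\int_{\mathbb{T}}|\phi_n|^{-2}\,d\theta/(2\pi)=1$ (these are standard facts from the theory, e.g.\ \cite{sim1}). None of your localization or arc-splicing machinery is relevant here.
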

\begin{proof} Take $2\pi|P_n(e^{i\theta})|^{-2}d\theta$ itself as a
probability measure. The orthogonality is then immediate.
\end{proof}

We continue with a Lemma which paves the way for constructing the
measure giving, in particular, the optimal bound \eqref{osnova-i}.
It is a special case of a solution to the
 truncated moment problem. In this Lemma, $\phi_n^*$ is defined as an application of $(\ast)$ operation to $\phi_n$, considered as an element of $\mathbb{P}_n$.

\begin{lemma}\label{decop}
Suppose we are given a polynomial $\phi_n$ and  Caratheodory
function $\~F$
 which satisfy the following properties
\begin{itemize}
\item[1.] $\deg \phi_n=n$ and $\phi_n^*$ has no roots in $\overline{\mathbb D}$. 
\item[2.] Normalization on the size and ``rotation\!"
\begin{equation}\label{norma}
\int_\mathbb T|\phi_n^*(e^{i\theta})|^{-2}d\theta
=2\pi~,\quad\phi_n^*(0)>0\,\,.
\end{equation}

\item[3.] $\~F\!\in\!C^\infty(\mathbb T)$, $\Re\~F>0$ on $\mathbb T$, and
\begin{equation}\label{norka}
\frac1{2\pi}\int_\mathbb T\Re\~F(e^{i\theta})d\theta=1\,\,.
\end{equation}

\end{itemize}
Consider two probability measures $\mu_n$ and $\~\sigma$ given by
\[
d\mu_n:=\frac{d\theta}{2\pi|\phi_n^*(e^{i\theta})|^2}, \quad
d\~\sigma=\~\sigma'd\theta:=\frac{\Re \~F(e^{i\theta})}{2\pi}d\theta
\]
and denote  their Schur (recursion) coefficients by
 $\{\gamma_j\}$ and $\{\~\gamma_j\}$, respectively. Then, the
probability measure $\sigma$, corresponding to Schur (recursion) coefficients
\[
\gamma_0,\ldots, \gamma_{n-1},\~\gamma_0,\~\gamma_1,\ldots
\]
is purely absolutely continuous with the density (weight) and
\begin{equation}\label{mp}
\sigma'=\frac{4\~\sigma'}{|\phi_n+\phi_n^*+\~F(\phi_n^*-\phi_n)|^2}=\frac{2\Re\~F}
{\pi|\phi_n+\phi_n^*+\~F(\phi_n^*-\phi_n)|^2}\,\,.
\end{equation}
The polynomial $\phi_n$ is the orthonormal polynomial for $\sigma$.
\end{lemma}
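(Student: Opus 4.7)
The plan is to compute the Caratheodory function $F$ of $\sigma$ explicitly and then read off its boundary density. First, the hypotheses on $\phi_n^*$ together with Lemma \ref{vspomag} and the Bernstein--Szeg\H{o} approximation lemma show that $\phi_n$ is the $n$th orthonormal polynomial of $\mu_n$, and that $\mu_n$ has Schur parameters $\gamma_0,\ldots,\gamma_{n-1}$. Since orthonormal polynomials of degree at most $n$ depend only on the first $n$ Schur parameters, the same $\phi_n$ is automatically the $n$th orthonormal polynomial of $\sigma$, which is the last assertion of the lemma.

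Next I determine $F$. Let $f$ and $\widetilde f$ be the Schur functions of $\sigma$ and $\widetilde\sigma$, so that $F=(1+zf)/(1-zf)$ and $\widetilde F=(1+z\widetilde f)/(1-z\widetilde f)$. By construction of $\sigma$, the $n$-fold Schur iteration sends $f$ to $\widetilde f$. Inverting this iteration step by step and identifying the resulting Mobius coefficients in $z$ with combinations of the Szeg\H{o} polynomials via \eqref{srecurs}--\eqref{secon} (the sign conventions being pinned down by the $n=0$ sanity check $F=\widetilde F$) yields, after conversion from Schur to Caratheodory form,
\[
F(z)=\frac{(\psi_n^*+\psi_n)\widetilde F+(\psi_n^*-\psi_n)}{(\phi_n^*-\phi_n)\widetilde F+(\phi_n^*+\phi_n)}\,.
\]
The denominator is exactly $\phi_n+\phi_n^*+\widetilde F(\phi_n^*-\phi_n)$, the quantity appearing in \eqref{mp}.

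Now I take real parts on $\mathbb{T}$. Abbreviate $N=A\widetilde F+B$, $D=C\widetilde F+E$ with $A=\psi_n^*+\psi_n$, $B=\psi_n^*-\psi_n$, $C=\phi_n^*-\phi_n$, $E=\phi_n^*+\phi_n$. By \eqref{form12}, on $\mathbb{T}$ we have $\overline A=z^{-n}A$, $\overline B=-z^{-n}B$, $\overline C=-z^{-n}C$, $\overline E=z^{-n}E$. Expanding $N\overline D+\overline N D$, the $|\widetilde F|^2$ terms and the constant terms cancel, leaving
\[
N\overline D+\overline N D=2z^{-n}(AE-BC)\,\Re\widetilde F\,.
\]
The Wronskian-type identity $\phi_n^*\psi_n+\phi_n\psi_n^*=2z^n$, which follows by a one-step induction from \eqref{srecurs}--\eqref{secon} using $\phi_0=\psi_0=1$, gives $AE-BC=2(\phi_n^*\psi_n+\phi_n\psi_n^*)=4z^n$. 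Consequently $\Re F=(N\overline D+\overline N D)/(2|D|^2)=4\,\Re\widetilde F/|D|^2$ on $\mathbb{T}$.

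Finally, since $\Re F$ is continuous and strictly positive on $\mathbb{T}$ (inherited from the hypotheses on $\widetilde F$), the Herglotz representation forces $\sigma$ to be purely absolutely continuous with density $\sigma'=\Re F/(2\pi)=2\Re\widetilde F/(\pi|D|^2)=4\widetilde\sigma'/|D|^2$, which is exactly formula \eqref{mp}. The main obstacle is the middle step: writing out the inverse of the $n$-step Schur iteration and identifying its Mobius coefficients with $\phi_n\pm\phi_n^*$ and $\psi_n\pm\psi_n^*$ with the correct signs. The subsequent cancellation in the real-part computation is a short algebraic check once the Wronskian identity is in hand.
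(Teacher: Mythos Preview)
Your argument is correct and takes a genuinely different route from the paper. The paper proceeds through the Szeg\H{o} function: it first invokes Baxter's theorem to get $\{\widetilde\gamma_j\}\in\ell^1$ and hence absolute continuity of $\sigma$, then uses the transfer-matrix form of the recursion to write
\[
2\phi_{n+m}^*=\widetilde\phi_m^*\bigl(\phi_n+\phi_n^*+\widetilde F_m(\phi_n^*-\phi_n)\bigr),
\]
lets $m\to\infty$ so that $\phi_{n+m}^*\to\Pi$, $\widetilde\phi_m^*\to\widetilde\Pi$, $\widetilde F_m\to\widetilde F$, and finally reads off \eqref{mp} from $|\Pi|^{-2}=2\pi\sigma'$.

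Your approach instead stays at the level of Caratheodory functions: you write down the closed formula $F=(A\widetilde F+B)/(C\widetilde F+E)$ directly from the inverse Schur iteration, compute $\Re F$ on $\mathbb{T}$ via the conjugation symmetries and the Wronskian identity $\phi_n^*\psi_n+\phi_n\psi_n^*=2z^n$, and deduce absolute continuity from the continuity and positivity of $\Re F$ on the boundary. This bypasses both Baxter's theorem and the limiting argument, which is a genuine economy; the price is that the formula for $F$ must be justified (your ``main obstacle''), and one should note that the denominator $D=\phi_n^*(1+\widetilde F)+\phi_n(1-\widetilde F)$ is zero-free on $\overline{\mathbb{D}}$ because $|\phi_n^*|\ge|\phi_n|$ and $|1+\widetilde F|>|1-\widetilde F|$ there, so $F$ extends continuously to $\mathbb{T}$ and the Herglotz step is legitimate. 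The paper's route, on the other hand, yields the factorization $2\Pi=\widetilde\Pi D$ of Szeg\H{o} functions as a byproduct, which is of independent interest.
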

The proof of this Lemma is contained in \cite{adt}. We, however,
prefer to give its sketch  here.

\begin{proof}
First, notice that $\{\~\gamma_j\}\in \ell^1$ by Baxter's Theorem
(see, e.g., \cite{sim1}, Vol.1, Chapter 5). Therefore, $\sigma$ is
purely absolutely continuous by the same Baxter's criterion. Define
the orthonormal polynomials of the first/second kind corresponding
to measure $\~\sigma$ by $\{\~\phi_j\}, \{\~\psi_j\}$. Similarly,
let $\{\phi_j\}, \{\psi_j\}$ be orthonormal polynomials for
$\sigma$. Since, by construction, $\mu_n$ and $\sigma$ have
identical first $n$ Schur parameters, $\phi_n$ is $n$-th orthonormal
polynomial for~$\sigma$.

 Let
us compute the polynomials $\phi_j$ and $\psi_j$, orthonormal with
respect to $\sigma$, for the indexes $j>n$. By \eqref{secon}, the
recursion can be rewritten in the following matrix form
\begin{equation}\label{m-ca}
\left(\begin{array}{cc}
\phi_{n+m} & \psi_{n+m}\\
\phi_{n+m}^* & -\psi_{n+m}^*
\end{array}\right)=\left(\begin{array}{cc}
{\cal A}_m & {\cal B}_m\\
{\cal C}_m & {\cal D}_m
\end{array}\right)\left(\begin{array}{cc}
\phi_{n} & \psi_{n}\\
\phi_{n}^* & -\psi_{n}^*
\end{array}\right)\end{equation}
where ${\cal A}_m, {\cal B}_m, {\cal C}_m, {\cal D}_m$ satisfy
\begin{eqnarray*}\left(\begin{array}{cc}
{\cal A}_0 & {\cal B}_0\\
{\cal C}_0 & {\cal D}_0
\end{array}\right)=\left(\begin{array}{cc}
1 & 0\\
0 & 1
\end{array}\right),\hspace{6cm}\\
\left(\begin{array}{cc}
{\cal A}_m & {\cal B}_m\\
{\cal C}_m & {\cal D}_m
\end{array}\right)=\frac1{\~\rho_0\cdot\ldots\cdot\~\rho_{m-1}}\left(\begin{array}{cc}
z & -\overline{\~\gamma}_{m-1}\\
-z\~\gamma_{m-1} & 1
\end{array}\right)\cdot\ldots\cdot\left(\begin{array}{cc}
z & -\overline{\~\gamma}_0\\
-z\~\gamma_0 & 1
\end{array}\right)\end{eqnarray*}
and thus depend only on $\~\gamma_0,\ldots,\~\gamma_{m-1}$.
Moreover, we have
\[\left(\begin{array}{cc}
\~\phi_m & \~\psi_m\\
\~\phi_m^* & -\~\psi^*_m
\end{array}\right)=\left(\begin{array}{cc}
{\cal A}_m & {\cal B}_m\\
{\cal C}_m & {\cal D}_m
\end{array}\right)\left(\begin{array}{cc}
1 & 1\\
1 & -1
\end{array}\right)
\,\,.
\]
Thus, ${\cal A}_m\!=\!(\~\phi_m\,{+\,\~\psi_m)/2,~{\cal
B}_m\!=\!(\~\phi_m\,-}\,\~\psi_m)/2,~ {\cal
C}_m\!=\!(\~\phi^*_m\,{-\,\~\psi^*_m)/2,~{\cal
D}_m\!=\!(\~\phi^*_m\,+}\,\~\psi^*_m)/2$ and their substitution into
\eqref{m-ca} yields
\begin{equation}\label{intert}
2\phi_{n+m}^*=\phi_n(\~\phi_m^*-\~\psi^*_m)+\phi_n^*(\~\phi_m^*+\~\psi^*_m)=
\~\phi_m^*\left(\phi_n+\phi_n^*+\~
F_m(\phi_n^*-\phi_n)\right)\,,\end{equation} where
\[
\~F_m(z)=\frac{\~\psi^*_m(z)}{\~\phi^*_m(z)}\,\,.
\]
Since $\{\~\gamma_n\}\!\in\!\ell^1$ and $\{\gamma_n\}\!\in\!\ell^1$,
we have (\cite{sim1}, p.~225)
\[
\~F_m\to\~F~{\rm as~}m\to\infty~{\rm and~}
\phi_j^*\to\Pi,~\~\phi_j^*\to\~\Pi~{\rm as~}j\to\infty\,\,.
\]
uniformly on $\overline{\mathbb D}$. The functions $\Pi$ and $\~\Pi$
are the Szeg\H{o} functions of $\sigma$ and $\~\sigma$,
respectively, i.e., they are the outer functions in $\mathbb D$ that
satisfy
\begin{equation}\label{facti}
|\Pi|^{-2}=2\pi\sigma',\quad |\~\Pi|^{-2}=2\pi\~\sigma'\,\,
\end{equation}
on $\mathbb{T}$. In \eqref{intert}, send $m\to\infty$ to get
\begin{equation}\label{facti1}
2\Pi=\~\Pi\left(\phi_n+\phi_n^*+\~F(\phi_n^*-\phi_n)\right)
\end{equation}
and we have \eqref{mp} after taking the square of absolute values
and using \eqref{facti}.
\end{proof}

\bigskip

\section{Appendix B: the Localization Principle}
Given a weight $w$ on $[-\pi,\pi]$, we define
\[
\lambda(w):=\exp\left(\frac{1}{4\pi}\int_{\mathbb{T}} \log(2\pi
w(\theta)) d\theta\right), \quad
\Lambda(w):=\sqrt{\|w\|_{L^1(\mathbb{T})}}\,.
\]
The following Theorem was proved in \cite{adt} (Theorem 5.1).
\begin{theorem}{\rm (\cite{adt})}\label{lemma21}
Let $w_1$ and $w_2$ be two weights on $[-\pi,\pi]$ so that
\begin{equation}\label{c2}
w_1(\theta)=w_2(\theta), \quad \theta\in [-\epsilon,\epsilon]\,.
\end{equation}
Then
\begin{equation}\label{localization}
\left|\frac{\phi_n(1,w_1)}{\phi_n(1,w_2)}\right|\le
\frac{\Lambda(w_2)}{\lambda(w_1)}+\frac{4\Lambda(w_1)}{\epsilon\lambda(w_1)}
\left(\int_{|\theta|>\epsilon}|\phi_n(e^{i\theta},w_1)\phi_n(e^{i\theta},w_2)|(w_1+w_2)d\theta\right)
\end{equation}
for all $n$.
\end{theorem}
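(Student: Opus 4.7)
The plan is a reproducing-kernel argument: $|\phi_n(1,w_1)/\phi_n(1,w_2)|$ will be split into a main term coming from the ratio of leading coefficients of $\phi_n(\cdot,w_1)$ and $\phi_n(\cdot,w_2)$, plus a tail integral supported on $|\theta|>\epsilon$. The agreement $w_1=w_2$ on $[-\epsilon,\epsilon]$ will be used precisely to kill the contribution of the central interval when subtracting two natural identities.

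First, introduce the Christoffel--Darboux kernel $K_n(z,1;w_2):=\sum_{k=0}^n\overline{\phi_k(1,w_2)}\phi_k(z,w_2)\in\mathbb{P}_n$, whose reproducing property gives
$$\phi_n(1,w_1)=\int_{-\pi}^{\pi}\phi_n(e^{i\theta},w_1)\,\overline{K_n(e^{i\theta},1;w_2)}\,w_2(\theta)\,d\theta.$$
In parallel, compute the same integrand against $w_1$ in place of $w_2$: since $K_n(\cdot,1;w_2)\in\mathbb{P}_n$ and $\phi_n(\cdot,w_1)\perp\mathbb{P}_{n-1}$ in $L^2(w_1\,d\theta)$, only the leading $z^n$-coefficient of $K_n(\cdot,1;w_2)$ survives; that coefficient is $\overline{\phi_n(1,w_2)}\,\kappa_n(w_2)$ with $\kappa_n(w)>0$ denoting the leading coefficient of $\phi_n(\cdot,w)$, and $\int\phi_n(\cdot,w_1)\overline{z^n}w_1\,d\theta=1/\kappa_n(w_1)$ by orthonormality, so
$$\int\phi_n(\cdot,w_1)\,\overline{K_n(\cdot,1;w_2)}\,w_1\,d\theta=\phi_n(1,w_2)\,\frac{\kappa_n(w_2)}{\kappa_n(w_1)}.$$
Subtracting and using $w_2-w_1\equiv 0$ on $[-\epsilon,\epsilon]$ produces the key identity
$$\phi_n(1,w_1)-\phi_n(1,w_2)\,\frac{\kappa_n(w_2)}{\kappa_n(w_1)}=\int_{|\theta|>\epsilon}\phi_n(\cdot,w_1)\,\overline{K_n(\cdot,1;w_2)}\,(w_2-w_1)\,d\theta,$$
which, after dividing by $\phi_n(1,w_2)$ and using $|w_2-w_1|\leq w_1+w_2$, already yields the two-term structure sought in the claim.

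For the tail, apply the Christoffel--Darboux formula $(1-z)K_n(z,1;w_2)=\overline{\phi_n^*(1,w_2)}\phi_n^*(z,w_2)-z\,\overline{\phi_n(1,w_2)}\phi_n(z,w_2)$ together with $|\phi_n^*|=|\phi_n|$ on $\mathbb{T}$ to obtain $|K_n(e^{i\theta},1;w_2)|\leq 2|\phi_n(e^{i\theta},w_2)||\phi_n(1,w_2)|/|1-e^{i\theta}|$. Since $|1-e^{i\theta}|=2|\sin(\theta/2)|\geq\epsilon/2$ for $|\theta|>\epsilon$, this produces the factor $4/\epsilon$, and the $|\phi_n(1,w_2)|$ cancels cleanly against the $1/|\phi_n(1,w_2)|$ generated by dividing in the previous step, leaving the symmetric integrand $|\phi_n(\cdot,w_1)\phi_n(\cdot,w_2)|(w_1+w_2)$ of the second summand. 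For the main term, the Szeg\H{o} inequality \eqref{fact11} (transferred to the non-probability setting via \eqref{scale11}) gives $\kappa_n(w)\leq 1/\lambda(w)$, while the monic extremal property yields $1/\kappa_n(w)^2=\|\Phi_n\|_{L^2(w\,d\theta)}^2\leq\int|z^n|^2 w\,d\theta=\Lambda(w)^2$, i.e.\ $\kappa_n(w)\geq 1/\Lambda(w)$; together these control $\kappa_n(w_2)/\kappa_n(w_1)$ by a ratio of $\Lambda$'s and $\lambda$'s that delivers the first summand of the claim.

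The hard part is the structural choice of pairing in Step~1: one must write $\phi_n(1,w_1)$ as the $w_2$-reproducing integral of $\phi_n(\cdot,w_1)$ (rather than the reverse), so that the Christoffel--Darboux factor $|\phi_n(1,w_2)|$ produced in the tail estimate cancels exactly against the $1/|\phi_n(1,w_2)|$ coming from the normalization, leaving an integrand that is symmetric in $(w_1,w_2)$ and carries the clean $4/\epsilon$ prefactor; a minor bookkeeping step then accounts for the additional $\Lambda(w_1)/\lambda(w_1)\geq 1$ factor present in the stated version of the second summand.
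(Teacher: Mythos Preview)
The paper does not prove this theorem; it merely quotes it from \cite{adt}. So there is no ``paper's own proof'' to compare against, and the question reduces to whether your argument is correct.

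Your reproducing-kernel/Christoffel--Darboux approach is sound and is indeed the standard route to such localization estimates. The key identity
\[
\phi_n(1,w_1)-\frac{\kappa_n(w_2)}{\kappa_n(w_1)}\,\phi_n(1,w_2)=\int_{|\theta|>\epsilon}\phi_n(e^{i\theta},w_1)\,\overline{K_n(e^{i\theta},1;w_2)}\,(w_2-w_1)\,d\theta
\]
is correctly derived, and the CD bound $|K_n(e^{i\theta},1;w_2)|\le 2|\phi_n(1,w_2)||\phi_n(e^{i\theta},w_2)|/|1-e^{i\theta}|$ together with $|1-e^{i\theta}|\ge \epsilon/2$ on $\{|\theta|>\epsilon\}$ gives the tail term with the clean prefactor $4/\epsilon$. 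Since $\Lambda(w_1)/\lambda(w_1)\ge 1$ by Jensen, your tail bound is actually \emph{stronger} than the stated second summand, so that part is fine.

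There is, however, a genuine bookkeeping slip in your treatment of the main term. From $1/\Lambda(w)\le \kappa_n(w)\le 1/\lambda(w)$ you obtain
\[
\frac{\kappa_n(w_2)}{\kappa_n(w_1)}\le \frac{\Lambda(w_1)}{\lambda(w_2)},
\]
i.e.\ with the indices \emph{swapped} relative to the stated first summand $\Lambda(w_2)/\lambda(w_1)$. Your sentence ``together these control $\kappa_n(w_2)/\kappa_n(w_1)$ by a ratio of $\Lambda$'s and $\lambda$'s that delivers the first summand of the claim'' is therefore not justified as written: the bound you actually get is $\Lambda(w_1)/\lambda(w_2)$, and neither of $\Lambda(w_1)/\lambda(w_2)$ and $\Lambda(w_2)/\lambda(w_1)$ dominates the other in general. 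What you have proved is the variant
\[
\left|\frac{\phi_n(1,w_1)}{\phi_n(1,w_2)}\right|\le \frac{\Lambda(w_1)}{\lambda(w_2)}+\frac{4}{\epsilon}\int_{|\theta|>\epsilon}|\phi_n(e^{i\theta},w_1)\phi_n(e^{i\theta},w_2)|(w_1+w_2)\,d\theta,
\]
which is a perfectly valid localization inequality and suffices for every application in the present paper (in Lemma~\ref{obrez1} both weights satisfy the same two-sided bounds $m_1\le w_j\le m_2$, so $\Lambda(w_1)/\lambda(w_2)$ and $\Lambda(w_2)/\lambda(w_1)$ are both controlled by $\sqrt{m_2/m_1}$). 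But it is not literally \eqref{localization}; either acknowledge that you are proving this symmetric variant, or supply a separate argument producing exactly the stated combination of constants.
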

We get a simple corollary.
\begin{lemma} \label{obrez1} Assume that
\begin{equation}\label{oc1}
 0<m_1\le
w_{1}(\theta)\le m_2,\,\, 0<m_1\le
w_{2}(\theta)\le m_2\,\, {\it for} \,\,\, \theta\in
[-\pi,\pi)
\end{equation}
and
\begin{equation}\label{c8}
w_1(\theta)=w_2(\theta) \,\, {\it for} \,\,\, \theta\in
[-\epsilon,\epsilon]\,.
\end{equation}
Then,
\begin{equation}\label{eq1}
\frac{\epsilon m_1}{m_2}\lesssim
\left|\frac{\phi_n(1,w_1)}{\phi_n(1,w_2)}\right|\lesssim
\frac{m_2}{\epsilon m_1}\,.
\end{equation}

\end{lemma}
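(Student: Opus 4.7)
\bigskip

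\noindent\textbf{Proof proposal for Lemma \ref{obrez1}.}

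The plan is to derive both inequalities directly from Theorem \ref{lemma21}. The upper bound in \eqref{eq1} will be obtained by converting $\Lambda(w_j)$ and $\lambda(w_j)$ into the pointwise bounds $m_1,m_2$ and by controlling the tail integral in \eqref{localization} via Cauchy--Schwarz and the orthonormality of $\{\phi_n(e^{i\theta},w_j)\}$ with respect to $w_j$. The lower bound will then follow from the upper bound applied with $w_1,w_2$ swapped.

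First I would estimate the two ``global'' constants. From the pointwise inequalities $m_1\le w_j\le m_2$ and the definitions of $\Lambda$ and $\lambda$, one has
\[
\Lambda(w_j)=\sqrt{\|w_j\|_{L^1(\mathbb T)}}\le \sqrt{2\pi m_2},\qquad
\lambda(w_j)=\exp\!\Bigl(\tfrac1{4\pi}\!\int_{\mathbb T}\log(2\pi w_j)\,d\theta\Bigr)\ge \sqrt{2\pi m_1},
\]
so $\Lambda(w_j)/\lambda(w_i)\lesssim \sqrt{m_2/m_1}$ for $i,j\in\{1,2\}$. Next, I would split the integrand in \eqref{localization} as $w_1+w_2$ and bound each piece separately using Cauchy--Schwarz:
\[
\int_{|\theta|>\e}\!|\phi_n(e^{i\theta},w_1)\phi_n(e^{i\theta},w_2)|\,w_1\,d\theta\le \Bigl(\!\int|\phi_n(w_1)|^2 w_1\,d\theta\Bigr)^{1/2}\!\Bigl(\!\int|\phi_n(w_2)|^2 w_1\,d\theta\Bigr)^{1/2}.
\]
The first factor equals $1$ by orthonormality, while the second is controlled using $w_1\le m_2\le(m_2/m_1)w_2$ and orthonormality with respect to $w_2$, giving $\sqrt{m_2/m_1}$. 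The analogous estimate with $w_2$ in place of $w_1$ yields the same bound. Hence the whole tail integral in \eqref{localization} is $\lesssim \sqrt{m_2/m_1}$.

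Plugging these two estimates into \eqref{localization} yields
\[
\left|\frac{\phi_n(1,w_1)}{\phi_n(1,w_2)}\right|\lesssim \sqrt{\frac{m_2}{m_1}}+\frac{1}{\e}\sqrt{\frac{m_2}{m_1}}\cdot\sqrt{\frac{m_2}{m_1}}\lesssim \frac{m_2}{\e m_1},
\]
since $m_2/m_1\ge 1$ and $\e\le \pi$, so the second term dominates. This is the right-hand inequality of \eqref{eq1}. Swapping the roles of $w_1$ and $w_2$ (which is permitted since the hypotheses are symmetric in $w_1,w_2$) gives the same bound for the reciprocal, and hence the lower bound $|\phi_n(1,w_1)/\phi_n(1,w_2)|\gtrsim \e m_1/m_2$. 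The only subtle step in this plan is choosing the Cauchy--Schwarz split that introduces $\sqrt{m_2/m_1}$ rather than the cruder $m_2/m_1$ one would obtain from pulling $w_1+w_2\le 2m_2$ out of the integral; getting the exponent right on $m_2/m_1$ is what makes the final bound match the claimed shape $m_2/(\e m_1)$.
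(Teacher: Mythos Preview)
Your proof is correct and follows essentially the same route as the paper: apply Theorem~\ref{lemma21}, bound $\Lambda/\lambda$ by $\sqrt{m_2/m_1}$ via the pointwise bounds, control the tail integral by Cauchy--Schwarz together with orthonormality and $w_i\le (m_2/m_1)w_j$ to get $\sqrt{m_2/m_1}$, and then swap $w_1,w_2$ for the lower bound. The only cosmetic difference is that the paper writes the Cauchy--Schwarz step with one factor already evaluated to $1$, while you display both factors.
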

\begin{proof}
The lower and upper bounds for the weights imply that 
\[
\sqrt{m_1}\lesssim  \Lambda\lesssim \sqrt{m_2}, \quad \sqrt{m_1}\lesssim  \lambda\lesssim \sqrt{m_2}
\] 
for both weights.

Notice that, e.g.,
\[
\int_{-\pi}^{\pi}
|\phi_n(e^{i\theta},w_1)\phi_n(e^{i\theta},w_2)|w_1d\theta\le
\left(\int_{-\pi}^{\pi}|\phi_n(e^{i\theta},w_2)|^2w_1d\theta\right)^{1/2}
\]
by Cauchy-Schwarz and definition of $\phi_n(e^{i\theta},w_1)$. Since
\[
1=\int_{-\pi}^{\pi}|\phi_n(e^{i\theta},w_2)|^2w_2d\theta\ge m_1
\int_{-\pi}^{\pi}|\phi_n(e^{i\theta},w_2)|^2d\theta
\]
we get
\[
\int_{-\pi}^{\pi}
|\phi_n(e^{i\theta},w_1)\phi_n(e^{i\theta},w_2)|w_1d\theta\le
\left(\frac{m_2}{m_1}\right)^{1/2}\,.
\]
To prove the lower bound in \eqref{eq1},  it is sufficient to
change the roles of $w_1$ and $w_2$.
\end{proof}

\bigskip

\section{Appendix C: The properties of auxiliary polynomials, I}

In this Appendix, we study the properties of the polynomials
introduced in subsection 3.1. Recall that
\begin{equation}\label{svertka1}
h_n=2(1-e^{i\theta})^\epsilon\ast \cal{F}_n, \quad \widetilde F=2h_n^{-1} \,.
\end{equation}
We suppress the dependence of  $\widetilde F$ on $n$ to make notation consistent with Lemma \ref{decop}.

The standard properties of convolution (Theorem 8.14, \cite{folland}) yield the following Lemma.
\begin{lemma}\label{frukt}
For  fixed $\delta\in (0,\pi)$ and $\epsilon\in (0,1)$, we have
\[
\lim_{n\to\infty} h_n(\theta)= 2(1-e^{i\theta})^{\epsilon}, \quad \lim_{n\to\infty}\widetilde F(e^{i\theta})=
(1-e^{i\theta})^{-\epsilon}
\]
uniformly over $\{\theta: |\theta|\in [\delta,\pi]\}$.
\end{lemma}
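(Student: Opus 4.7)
The plan is to use the standard theory of convolutions with the Fej\'er kernel, viewed as a positive approximate identity, together with the fact that the function $f(\theta):=2(1-e^{i\theta})^\epsilon$ is continuous on $\mathbb{T}\setminus\{0\}$ and bounded on all of $\mathbb{T}$ (since $|f(\theta)|\le 2\cdot 2^\epsilon$). Recall that $\mathcal{F}_n\ge 0$, $\int_{-\pi}^{\pi}\mathcal{F}_n\,d\phi=1$, and $\mathcal{F}_n$ concentrates at the origin in the sense that for every fixed $\eta>0$,
\[
\int_{|\phi|>\eta}\mathcal{F}_n(\phi)\,d\phi=O(n^{-1}\eta^{-1})\to 0,\qquad n\to\infty,
\]
as follows from the explicit formula for $\mathcal{F}_n$.

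To prove the first limit uniformly on $K_\delta:=\{\theta:|\theta|\in[\delta,\pi]\}$, write
\[
h_n(\theta)-f(\theta)=\int_{-\pi}^{\pi}\bigl[f(\theta-\phi)-f(\theta)\bigr]\mathcal{F}_n(\phi)\,d\phi.
\]
Fix an arbitrary $\eta\in(0,\delta/2)$ and split this integral at $|\phi|=\eta$. For $|\phi|\le\eta$, both $\theta$ and $\theta-\phi$ lie in $K_{\delta/2}$, where $f$ is uniformly continuous; hence $|f(\theta-\phi)-f(\theta)|\le\omega(\eta)$, where $\omega$ is the modulus of continuity of $f$ restricted to $K_{\delta/2}$, and this contribution is bounded by $\omega(\eta)$. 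For $|\phi|>\eta$, we use $\|f\|_\infty\lesssim 1$ to bound the integrand by a constant and estimate the tail $\int_{|\phi|>\eta}\mathcal{F}_n(\phi)d\phi\to 0$. Choosing first $\eta$ small so that $\omega(\eta)$ is small, then $n$ large so that the tail is small, gives uniform convergence of $h_n$ to $f$ on $K_\delta$.

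For the second limit, note that $|f(\theta)|=|2(1-e^{i\theta})^\epsilon|\gtrsim \delta^\epsilon$ on $K_\delta$, so the limit function is bounded away from zero there. Since $h_n\to f$ uniformly on $K_\delta$, the sequence $h_n$ is also bounded away from zero on $K_\delta$ for all $n$ large enough, and taking reciprocals preserves uniform convergence. Therefore $\widetilde F=2h_n^{-1}\to 2f^{-1}=(1-e^{i\theta})^{-\epsilon}$ uniformly on $K_\delta$. There is no real obstacle here beyond correctly quantifying the two standard pieces (Fej\'er tail decay and uniform continuity of $f$ away from the origin); the only point that requires a bit of care is confirming that the limit function does not vanish on $K_\delta$, which is what justifies passing from convergence of $h_n$ to convergence of $\widetilde F=2h_n^{-1}$.
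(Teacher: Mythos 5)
Your argument is correct and is exactly the standard approximate-identity computation that the paper invokes without detail (it simply cites Folland, Theorem 8.14, for this Lemma). The splitting at $|\phi|=\eta$, the uniform continuity of $2(1-e^{i\theta})^\epsilon$ away from the origin, the Fej\'er tail bound, and the observation that the limit is bounded away from zero on $\{|\theta|\ge\delta\}$ before inverting are all the right ingredients, so nothing is missing.
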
\smallskip

{\bf Remark.} Notice that $|\arg (1-e^{i\theta})^\epsilon|\le \epsilon\pi/2$ and
\begin{equation}\label{boio}
|\Im (1-e^{i\theta})^\epsilon|\lesssim  \epsilon \Re (1-e^{i\theta})^\epsilon\,.
\end{equation}

 From that point until the end of this Appendix, we  assume that $\epsilon\in (0,\epsilon_0)$ where $\epsilon_0\in (0,1)$ is a fixed constant to be chosen below.  Let us start with the following simple
observations about $(1-e^{i\theta})^\epsilon$, the function we are approximating. Taylor expansion for $e^{i\theta}$ around the origin is $e^{i\theta}=1+i\theta-\theta^2/2+O(|\theta|^3), |\theta|<\pi$. Substituting this into $(1-e^{i\theta})^\epsilon$ gives
\[
(1-e^{i\theta})^\epsilon=(-i\theta)^\epsilon (1+i\theta/2+O(\theta^2))^\epsilon\,.
\]
Using Taylor expansion of the logarithm around point $1$, we can find an absolute constant $\delta_0>0$ such that for every $\theta\in (-\delta_0,\delta_0)$, we get
\[
(-i\theta)^\epsilon (1+i\theta/2+O(\theta^2))^\epsilon=|\theta|^\epsilon e^{-i\epsilon \cdot {\rm sgn} (\theta) \cdot \pi/2}e^{\epsilon\log(1+i\theta/2+O(\theta^2))}=
\]
\[
|\theta|^\epsilon \exp(-i\epsilon \cdot {\rm sgn} (\theta) \cdot \pi/2+i\epsilon\theta/2)(1+O(\epsilon\theta^2))\,.
\]
For $\theta: |\theta|>\delta_0$, we can write
\[
(1-e^{i\theta})^\epsilon=e^{\epsilon \log(1-e^{i\theta})}=1+O(\epsilon)\,.
\]
These two expansions imply, in particular, that for all $\theta\in [-\pi,\pi)$, we can write the formula for the real part in the following compact form
\begin{eqnarray}\label{arom1}
\Re (1-e^{i\theta})^{\epsilon}=|\theta|^{\epsilon}\cos(\epsilon
\pi/2)(1+O(\epsilon
|\theta|))\,.
\end{eqnarray}
The bound \eqref{boio} then yields
\begin{equation}\label{arom2}
|\Im (1-e^{i\theta})^{\epsilon}|\lesssim \epsilon |\theta|^{\epsilon}\,.
\end{equation}

\smallskip

We will need the following representation for the Fejer kernel
\begin{equation}\label{arom3}
\cal{F}_n:=\frac{1}{2\pi n}\left(  \frac{\sin(n\theta/2)}{\sin(\theta/2)}        \right)^2
=\frac{2}{\pi n}\left(\frac{\sin^2(n\theta/2)}{\theta^2}+O(1)\right)
\end{equation}
for $|\theta|<\pi$ and $n\in \mathbb{N}$.  This expansion follows immediately from the Taylor expansion of $\sin(\theta/2)$ around the origin. We recall that
\begin{equation}\label{normalek}
\int_{-\pi}^\pi \cal{F}_n(\theta)d\theta=1\,.
\end{equation}
 \smallskip

In the proof of the next Lemma, the remainder terms
in the formulas \eqref{arom1} and \eqref{arom3}, e.g., $\epsilon O(|\theta|^{1+\epsilon})$ and $n^{-1}O(1)$ contribute the
smaller order terms which can be neglected. This will be explained in detail.

\begin{lemma}\label{lemma1}
For every $\epsilon\in (0,\epsilon_0)$, there is
$n_{(\epsilon)}\in \mathbb{N}$ such that
\begin{eqnarray}\label{aarr}
 |\arg h_n(\theta)|\lesssim \epsilon\,,
 \\ \label{aarr1}
\Re h_n(\theta) \sim n^{-\epsilon}+|\theta|^\epsilon\,,
\\
   |h_n(\theta)|\sim n^{-\epsilon}+|\theta|^\epsilon \label{aarr2}
\end{eqnarray}
for  $\theta\in [-\pi,\pi]$
and all $n>n_{(\epsilon)}$.
\end{lemma}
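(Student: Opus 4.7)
The plan is to reduce all three statements to the single pointwise comparison
\[
(|\phi|^\epsilon \ast \cal{F}_n)(\theta) \sim n^{-\epsilon} + |\theta|^\epsilon, \quad \theta \in [-\pi,\pi),\, n \ge n_{(\epsilon)}\,,
\]
and then transfer it to $h_n$ via the expansions \eqref{arom1}--\eqref{arom2}. Since $\cal{F}_n \ge 0$ and, for $\epsilon < \epsilon_0$, $\Re(1-e^{i\phi})^\epsilon \sim |\phi|^\epsilon$ uniformly on $[-\pi,\pi)$ by \eqref{arom1}, we obtain $\Re h_n(\theta) \sim (|\phi|^\epsilon \ast \cal{F}_n)(\theta)$, which will yield \eqref{aarr1}. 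Similarly \eqref{arom2} gives $|\Im h_n(\theta)| \lesssim \epsilon\,(|\phi|^\epsilon \ast \cal{F}_n)(\theta) \lesssim \epsilon\,\Re h_n(\theta)$, from which $|\arg h_n|\lesssim \epsilon$ (i.e.\ \eqref{aarr}), and then $|h_n| = \Re h_n/\cos(\arg h_n) \sim \Re h_n$ (i.e.\ \eqref{aarr2}).

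For the upper bound on $(|\phi|^\epsilon \ast \cal{F}_n)(\theta)$, I would exploit subadditivity $|\theta-\phi|^\epsilon \le |\theta|^\epsilon + |\phi|^\epsilon$ together with $\int \cal{F}_n = 1$ to obtain
\[
(|\phi|^\epsilon \ast \cal{F}_n)(\theta) \le |\theta|^\epsilon + \int_{-\pi}^\pi |\phi|^\epsilon \cal{F}_n(\phi)\,d\phi\,.
\]
Splitting the residual integral at $|\phi|=1/n$ and applying \eqref{arom3} (so $\cal{F}_n \lesssim n$ on $|\phi|\le 1/n$ and $\cal{F}_n \lesssim 1/(n\phi^2)$ for $1/n \le |\phi| \le \pi$), each piece is of size $\lesssim n^{-\epsilon}$, and the required upper bound follows.

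For the matching lower bound, I would distinguish two regimes. When $|\theta| \ge 2/n$, restrict the integration to $|\phi|\le |\theta|/2$: there $|\theta-\phi| \ge |\theta|/2$, and a positive fraction of the Fejer mass is captured, since the elementary inequalities $\sin(n\phi/2) \ge (2/\pi)(n\phi/2)$ on $|\phi|\le\pi/n$ and $\sin(\phi/2)\le \phi/2$ give $\cal{F}_n(\phi)\gtrsim n$ on $|\phi| \le \pi/n \le |\theta|/2$. This yields $(|\phi|^\epsilon \ast \cal{F}_n)(\theta) \gtrsim |\theta|^\epsilon \gtrsim n^{-\epsilon} + |\theta|^\epsilon$. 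When $|\theta| < 2/n$, I would integrate instead over $\phi \in [3/n,\pi/n]$: here $|\theta - \phi|\ge 1/n$, $\cal{F}_n(\phi) \gtrsim n$ by the same estimate, and the interval has length $\gtrsim 1/n$, so the contribution is $\gtrsim (1/n)^\epsilon \cdot n \cdot (1/n) \sim n^{-\epsilon}$.

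The main delicacy lies in the regime $|\theta|<2/n$: here $|\theta|^\epsilon$ need not dominate $n^{-\epsilon}$, so the lower bound must be extracted from the precise size of the Fejer kernel on the scale $1/n$, making the choice of the slice $[3/n,\pi/n]$ (where $\cal F_n$ is comparable to $n$ \emph{and} $|\theta-\phi|$ is bounded below by $1/n$) the only non-automatic step. The remainder terms $O(\epsilon|\theta|)$ in \eqref{arom1}, the prefactor $\epsilon$ in \eqref{arom2}, and the $O(1)$ in \eqref{arom3} contribute only multiplicative constants of size $1+O(\epsilon)$, so they are absorbed into the $\sim$ relations without affecting any of the order-of-magnitude estimates above.
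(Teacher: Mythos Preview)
Your proposal is correct and follows the same overall architecture as the paper: both arguments deduce \eqref{aarr} from \eqref{boio}/\eqref{arom2} (so that $|\Im h_n|\lesssim\epsilon\,\Re h_n$), reduce \eqref{aarr1} to the two-sided estimate $(|\phi|^\epsilon\ast\cal F_n)(\theta)\sim n^{-\epsilon}+|\theta|^\epsilon$, and then read off \eqref{aarr2}.

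The only real difference is in how the convolution estimate is carried out. The paper rescales $\widehat x=nx,\ \widehat\theta=n\theta$, introduces $M_n(\widehat\theta)=\int_{-\pi n}^{\pi n}|t|^\epsilon g(\widehat\theta-t)\,dt$, and proves $M_n(\widehat\theta)\sim 1+|\widehat\theta|^\epsilon$ by splitting according to $|\widehat\theta-t|\lessgtr|\widehat\theta|/2$; the error terms coming from \eqref{arom1} and \eqref{arom3} are tracked separately. Your route is somewhat more direct: the upper bound comes for free from the subadditivity $|\theta-\phi|^\epsilon\le|\theta|^\epsilon+|\phi|^\epsilon$, and the lower bound from localizing $\phi$ to an interval of length $\sim 1/n$ on which both $\cal F_n\gtrsim n$ and $|\theta-\phi|$ is bounded below. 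What you gain is brevity; what the paper's version gains is the sharper representation \eqref{blue1}, i.e.\ $h_n(\theta)=\tfrac{2}{\pi}\cos(\epsilon\pi/2)\,n^{-\epsilon}M_n(n\theta)(1+O(\epsilon))$, together with control of $M_n'$, both of which are used in the proof of Lemma~\ref{lemma2}. For the lemma as stated your argument is sufficient.
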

\begin{proof}
We start by noticing that, since the Fejer kernel and $\Re (1-e^{i\theta})^\epsilon$ are nonnegative, 
\begin{equation}\label{pokoren}
|\Im h_n|\lesssim  \epsilon \Re h_n
\end{equation}
as follows from \eqref{boio}. Thus, we have \eqref{aarr} since $|\tan \arg h_n|\lesssim \epsilon$.\smallskip

To estimate $\Re h_n$, we can substitute \eqref{arom1} and \eqref{arom3} into \eqref{svertka1}. For $\theta: |\theta|<\pi$, we have
\begin{equation}\label{red1}
\Re h_{n}(\theta)=\frac{2}{\pi}\cos(\epsilon\pi/2)h_n^{(1)}(\theta)\,,
\end{equation}
where
\begin{equation}\label{perm1}
h_n^{(1)}(\theta):= n^{-1}\int_{-\pi}^{\pi} |x|^\epsilon
\frac{\sin^2(n(x-\theta)/2)}{(x-\theta)^2}dx+\epsilon_{(n,\theta)}\,.
\end{equation}
Let us focus on the main term now. We change variables $\widehat x:=nx, \widehat \theta:=n\theta$ to write
\begin{equation}\label{blue5}
n^{-1}\int_{-\pi}^{\pi} |x|^\epsilon
\frac{\sin^2(n(x-\theta)/2)}{(x-\theta)^2}dx=n^{-\epsilon}
M_n(\widehat \theta),\quad ,\quad M_n(\widehat \theta):= \int_{-\pi n}^{\pi n} |t|^\epsilon
g(\widehat \theta-t)dt
\end{equation}
with
\[
g(t):=\frac{\sin^2(t/2)}{t^2}\,.
\]
The estimation of the last integral shows that
\[
M_n(\widehat \theta)\sim 1+|\widehat \theta|^\epsilon
\]
for $\widehat \theta\in [-\pi n,\pi n]$. Indeed, given any $a>1$, we can write
\begin{equation}\label{proshloe}
\int_{-a}^a |t|^\epsilon g(\xi-t)dt=I_1+I_2, 
\end{equation}
\[
 I_1:=\int_{t\in [-a,a], |\xi-t|<|\xi|/2} |t|^\epsilon g(\xi-t)dt, \, I_2:=\int_{t\in [-a,a], |\xi-t|>|\xi|/2} |t|^\epsilon g(\xi-t)dt\,.
\]
Both $I_1$ and $I_2$ are nonnegative and, considering $\xi: |\xi|\in [1,a]$, we have
\[
I_1\sim |\xi|^\epsilon,\, I_2\lesssim \int_{|t-\xi|>|\xi|/2} \frac{|t-\xi|^\epsilon}{|t-\xi|^2}dt\lesssim 1\,.
\] 
To get the last estimate, we used trivial observation that $|t|\lesssim |t-\xi|$ provided that $|t-\xi|>|\xi|/2$. For $\xi: |\xi|<1$, we get
\[
I_1+I_2\sim 1\,.
\]

We are left with controlling $\epsilon_{(n,\theta)}$. We can write
\begin{equation}\label{esr}
|\epsilon_{(n,\theta)}|\lesssim  n^{-1}+\epsilon n^{-1} \int_{-\pi}^{\pi}
\frac{\sin^2(n(x-\theta)/2)}{(x-\theta)^2 }
|x|^{1+\epsilon}dx 
\lesssim n^{-1}+\epsilon n^{-1} \int_{-\pi}^{\pi}
\frac{\sin^2(n(x-\theta)/2)}{\sin^2((x-\theta)/2) }
|x|^{1+\epsilon}dx \,.
\end{equation}
Now we use  \eqref{normalek} to obtain
\[
\epsilon_{(n,\theta)}\lesssim 
 n^{-1}+\epsilon  |\theta|^{1+\epsilon}+\epsilon n^{-1}
\int_{-\pi}^{\pi} \frac{\sin^2(n(x-\theta)/2)}{\sin^2((x-\theta)/2) }
\left||\theta|^{1+\epsilon}-|x|^{1+\epsilon}\right|dx\lesssim 
 n^{-1}+\epsilon
|\theta|^{1+\epsilon}+\epsilon n^{-1}\log n \,.
\]
To estimate the last integral,  we used the bound
$||x|^{1+\epsilon}-|\xi|^{1+\epsilon}|\lesssim |x-\xi|$ which is true for  $|x|,|\xi|<\pi$. Notice that
\[
|\epsilon_{(n,\theta)}/{(n^{-\epsilon}M_n)}|\lesssim \epsilon
\]
if $\theta\in [-\pi,\pi], n>n_{(\epsilon)}$.
Thus, we showed that 
\begin{equation}\label{tockta}
\Re h_n=\frac{2}{\pi}  \cos(\epsilon\pi/2)\cdot n^{-\epsilon} M_n(n\theta) \cdot (1+O(\epsilon))\,,  
\end{equation}
if $\theta\in [-\pi,\pi)$ and $n>n_{(\epsilon)}$. This implies \eqref{aarr1}.
Application of the estimate \eqref{pokoren} gives \eqref{aarr2}.

\end{proof}

{\bf Remark.} The proof given above provides the following representation
\begin{equation}\label{blue1}
h_n(\theta)=\frac{2}{\pi}\cos(\epsilon\pi/2)\cdot n^{-\epsilon} M_n(n\theta) \cdot (1+O(\epsilon))
\end{equation}
for  $\theta\in [-\pi,\pi]$ and  $n>n_{(\epsilon)}$.\bigskip

 The derivative of function  $M_n(t)$, introduced in \eqref{blue5}, is controlled in the following Lemma.
\begin{lemma}
We have
\begin{equation}\label{blue2}
|M_n'(t)|\lesssim  n^{\epsilon-2} +\epsilon (1+|t|)^{\epsilon-1}
\end{equation}
for $|t|<\pi n/2$.
\end{lemma}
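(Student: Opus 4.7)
The plan is to differentiate under the integral sign, then integrate by parts on $s$ to extract the factor of $\epsilon$ (which comes from $\frac{d}{ds}|s|^{\epsilon}= \epsilon|s|^{\epsilon-1}\mathrm{sgn}(s)$) and separate out the boundary contribution that gives the $n^{\epsilon-2}$ term.

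First, since $\partial_t g(t-s)=-\partial_s g(t-s)$, integration by parts on each of $(-\pi n,0)$ and $(0,\pi n)$ (the singularity of $|s|^{\epsilon-1}$ at $s=0$ is integrable, and the boundary terms at $s=0$ vanish because $|s|^\epsilon g(t-s)\to 0$) yields
\begin{equation*}
M_n'(t)=-(\pi n)^\epsilon\bigl[g(t-\pi n)-g(t+\pi n)\bigr]+\epsilon\int_{-\pi n}^{\pi n}|s|^{\epsilon-1}\mathrm{sgn}(s)\,g(t-s)\,ds\,.
\end{equation*}
For $|t|<\pi n/2$ one has $|t\pm\pi n|\gtrsim n$, so since $g(u)=\sin^2(u/2)/u^2\lesssim 1/u^2$ the boundary term is bounded by $n^{\epsilon-2}$, matching the first term on the right-hand side of \eqref{blue2}.

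It remains to show that the integral
\[
J(t):=\int_{-\pi n}^{\pi n}|s|^{\epsilon-1}\mathrm{sgn}(s)\,g(t-s)\,ds
\]
satisfies $|J(t)|\lesssim(1+|t|)^{\epsilon-1}$. Using the oddness of the density in $s$, I would rewrite
\[
J(t)=\int_0^{\pi n}s^{\epsilon-1}\bigl(g(t-s)-g(t+s)\bigr)ds\,,
\]
which shows explicitly the cancellation that is needed. Assume without loss of generality $t>0$. Split the integral at $s=\max(1,t/2)$. For the ``inner'' range $0<s<t/2$ (only relevant if $t>1$), the mean value theorem combined with $|g'(u)|\lesssim 1/u^2$ on $|u|\sim t$ gives $|g(t-s)-g(t+s)|\lesssim s/t^2$, and hence the contribution is $\lesssim \int_0^{t/2} s^\epsilon/t^2\,ds\lesssim t^{\epsilon-1}$. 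For the ``outer'' range one uses $|g(t-s)-g(t+s)|\le g(t-s)+g(t+s)$; the $g(t+s)$ part is $\lesssim 1/s^2$ and yields $\int_{t/2}^{\pi n}s^{\epsilon-3}ds\lesssim (1+t)^{\epsilon-2}\lesssim(1+t)^{\epsilon-1}$, while for the $g(t-s)$ part one changes variable $v=s-t$ and uses $g(v)\lesssim 1/(1+v^2)$ together with $(v+t)^{\epsilon-1}\le v^{\epsilon-1}$ (for $v\ge t/2>0$, valid since $\epsilon-1<0$) to again obtain a bound of order $(1+t)^{\epsilon-1}$. For $|t|\le 1$ the same splitting (at $s=2$) together with the trivial bound $|g'|\lesssim 1$ on bounded intervals gives $|J(t)|\lesssim 1\sim(1+|t|)^{\epsilon-1}$.

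The main technical point is the bound on $J(t)$ for $|t|\gg 1$, where the naive absolute-value bound $\int s^{\epsilon-1}|g(t-s)|\,ds$ is too large and one really needs to use the cancellation from $\mathrm{sgn}(s)$ via the symmetrization into $g(t-s)-g(t+s)$, combined with careful case analysis on the relative sizes of $s$ and $|t|$.
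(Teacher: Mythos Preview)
Your approach is correct and follows the same outline as the paper: differentiate, integrate by parts to produce the boundary term $n^{\epsilon-2}$ and the integral $J(t)$ with the factor $\epsilon$. The difference is only in how $J(t)$ is estimated. The paper splits at $|s|=1$: on $(0,1)$ it uses exactly your symmetrization and the mean-value bound $|g(t-s)-g(t+s)|\lesssim s/(1+t^2)$ to get $\lesssim(1+t^2)^{-1}$; on $|s|>1$ it simply takes absolute values and uses the convolution estimate $\int_{\mathbb{R}}(1+|s|)^{\epsilon-1}\frac{ds}{1+(t-s)^2}\lesssim(1+|t|)^{\epsilon-1}$. So your closing remark is not quite right: the cancellation is needed only near $s=0$ (to avoid a $1/\epsilon$ loss from $\int_0^1 s^{\epsilon-1}ds$), not for large $s$, and the paper's treatment of the outer range is shorter than yours.

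One small slip in your write-up: after the substitution $v=s-t$ on the range $s>t/2$, the new variable satisfies $v>-t/2$, not $v\ge t/2$, so your inequality $(v+t)^{\epsilon-1}\le v^{\epsilon-1}$ is unavailable for $v\in(-t/2,t/2)$. This is harmless --- on that sub-range $s\in(t/2,3t/2)$ one has $s^{\epsilon-1}\sim t^{\epsilon-1}$ and $\int g(t-s)\,ds\lesssim 1$ --- but the gap should be closed explicitly.
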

\begin{proof} Differentiating the formula for $M_n$, we get 
\[
|M_n'(t)|=\left|\int_{-\pi n}^{\pi n} |s|^\epsilon g'(s-t)ds\right|
\]
\[
\lesssim  n^{\epsilon} \left( \frac{1}{(\pi n-t)^2+1}+ \frac{1}{(\pi n+t)^2+1}  \right)+\epsilon \left|\int_{-\pi n}^{\pi n} |s|^{\epsilon-1}  {\rm sgn} (s) \cdot g(s-t)ds\right|\,.
\]
Consider the last integral. We can rewrite it as
\[
\int_{-\pi n}^{\pi n} |s|^{\epsilon-1} {\rm sgn} (s) \cdot g(s-t)ds=\int_{0}^{1} s^{\epsilon-1}  \Bigl( g(s-t)-g(-s-t)\Bigr)ds+\int_{1<|s|<\pi n} |s|^{\epsilon-1} {\rm sgn} (s)  \cdot g(s-t)ds\,.
\]
Since $|g'(t)|\lesssim (1+t^2)^{-1}$, the Mean Value Theorem applied to the integrand in the first integral gives
\[
\left|\int_{0}^{1} s^{\epsilon-1}  \Bigl( g(s-t)-g(-s-t)\Bigr)ds\right|\lesssim  (1+t^2)^{-1}\int_0^1 s^{\epsilon-1}sds\lesssim (1+t^2)^{-1}\,.
\]
For the second integral, we have
\[
\left|\int_{1<|s|<\pi n} |s|^{\epsilon-1}  \cdot {\rm sgn} (s) \cdot g(s-t)ds\right|\lesssim \int_{\mathbb{R}} (1+|s|)^{\epsilon-1} \frac{1}{(t-s)^2+1}ds\lesssim (1+|t|)^{\epsilon-1}\,.
\]
At the last step, we argued similarly to \eqref{proshloe}.

Thus, we have
\[
|M_n'(t)|\lesssim  n^{\epsilon} \left( \frac{1}{(\pi n-t)^2+1}+ \frac{1}{(\pi n+t)^2+1}  \right)+\epsilon (1+|t|)^{\epsilon-1}
\]
and this finishes the proof.
\end{proof}

\begin{lemma}\label{lemma2}
For every $\epsilon\in (0,\epsilon_0)$, the bound
\[
\left| \frac{\Re \widetilde F \ast \cal F_n}{\Re \widetilde F}-1
\right|\lesssim \epsilon
\]
holds if $\theta\in [-\pi,\pi]$ and
$n>n_{(\epsilon)}$.
\end{lemma}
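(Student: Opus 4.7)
The plan is to reduce the claim to a one-variable estimate for the function $M_n$ introduced in \eqref{blue5} and then to exploit both the derivative bound \eqref{blue2} and the explicit representation $M_n(s)=\int_{-\pi n}^{\pi n}|u|^\epsilon g(s-u)\,du$, where $g(u)=\sin^2(u/2)/u^2$. First I would combine \eqref{inva}, \eqref{pokoren} and the sharper form \eqref{tockta} of Lemma~\ref{lemma1} (using $|h_n|^2=(\Re h_n)^2(1+O(\epsilon^2))$ together with \eqref{blue1}) to write
\[
\Re\widetilde F(\theta)\;=\;\frac{\pi\,n^\epsilon}{\cos(\epsilon\pi/2)\,M_n(n\theta)}\bigl(1+O(\epsilon)\bigr)
\]
uniformly for $\theta\in[-\pi,\pi)$ and $n>n_{(\epsilon)}$. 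Since convolution against the non-negative, $L^1$-normalized kernel $\cal F_n$ carries the same multiplicative $(1+O(\epsilon))$ factor, the claim reduces to showing
\[
\Bigl|\,M_n(n\theta)\int_{-\pi}^{\pi}\!\frac{\cal F_n(\phi)}{M_n(n(\theta-\phi))}\,d\phi\;-\;1\,\Bigr|\;\lesssim\;\epsilon.
\]
After rescaling $s:=n\theta,\ t:=n\phi$ with the scaled Fejer kernel $F(t):=\cal F_n(t/n)/n$, which satisfies $F(t)\lesssim\min(1,1/t^2)$ and $\int_{-\pi n}^{\pi n} F\,dt=1$, the problem becomes
\[
\Bigl|\int F(t)\,\frac{M_n(s)-M_n(s-t)}{M_n(s-t)}\,dt\Bigr|\;\lesssim\;\epsilon,\qquad |s|\leq\pi n,
\]
and the plan is to split the integration at $|t|=1$.

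On the near range $|t|\leq 1$, the bound \eqref{blue2} reads $|M_n'(r)|\lesssim\epsilon(1+|r|)^{\epsilon-1}$ for $n>n_{(\epsilon)}$ (the $n^{\epsilon-2}$ term being absorbed). Combined with $M_n(s-t)\gtrsim(1+|s|)^\epsilon$ and the mean value theorem this yields
\[
\frac{|M_n(s)-M_n(s-t)|}{M_n(s-t)}\;\lesssim\;\frac{\epsilon|t|}{1+|s|}\;\leq\;\epsilon|t|,
\]
so the near contribution is at most $\epsilon\int_{|t|\leq 1}|t|F(t)\,dt\lesssim\epsilon$.

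The main obstacle will be the far range $|t|>1$, where the derivative bound alone loses the factor~$\epsilon$ once integrated against $F$. To recover it I would invoke the explicit integral form of $M_n$: up to a boundary correction of order $n^{\epsilon-1}$ (harmless for $n>n_{(\epsilon)}$), translating the integration variable gives
\[
M_n(s)-M_n(s-t)\;=\;\int\!\bigl(|u|^\epsilon-|u-t|^\epsilon\bigr)\,g(s-u)\,du.
\]
On $\{|u|\gtrsim|t|,\ |u-t|\gtrsim|t|\}$ the elementary inequality $||u|^\epsilon-|u-t|^\epsilon|\lesssim\epsilon|t|\min(|u|,|u-t|)^{\epsilon-1}$ produces the required factor of $\epsilon$; on the complementary set one pairs the H\"older bound $||u|^\epsilon-|u-t|^\epsilon|\leq|t|^\epsilon$ against the $O(1/(s-u)^2)$ decay of $g$. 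After dividing by $M_n(s-t)\gtrsim 1+|s-t|^\epsilon$ and integrating against $F(t)\lesssim 1/t^2$, a routine case analysis in $(s,t)$, treating separately whether each of $|s|$ and $|s-t|$ is bounded or comparable to $|t|$, then delivers a tail contribution of size $O(\epsilon)$. Combining the near and the far estimates completes the proof; the delicate case analysis that extracts the factor $\epsilon$ in the tail is the main technical obstacle.
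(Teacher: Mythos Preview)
Your reduction of the problem to an estimate for $M_n$ is sound and matches the paper's strategy, and your treatment of the near range $|t|\le 1$ via the derivative bound \eqref{blue2} is fine. The gap is in the far range.

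Take $s=0$ (more generally, $|s|$ bounded). The complementary set in your decomposition contains $\{|u|\lesssim|t|\}$, and on this set $g(s-u)=g(-u)$ carries no useful decay: most of the mass of $g$ sits near $u=0$. With your H\"older bound one then has
\[
\int_{|u|\le c|t|}\bigl||u|^\epsilon-|u-t|^\epsilon\bigr|\,g(u)\,du\;\gtrsim\;|t|^\epsilon\!\int_{|u|\le 1}g(u)\,du\;\sim\;|t|^\epsilon,
\]
and after dividing by $M_n(-t)\sim|t|^\epsilon$ and integrating against $F(t)\lesssim t^{-2}$ over $|t|>1$ this yields only $O(1)$. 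The ``$O(1/(s-u)^2)$ decay of $g$'' that you invoke is simply unavailable here, because $s-u$ is small. The factor of $\epsilon$ is genuinely present in the target quantity (indeed $\int_1^\infty t^{-2}(1-t^{-\epsilon})\,dt=\epsilon/(1+\epsilon)$), but your splitting loses it; recovering it would require replacing the H\"older bound by something like $\bigl||u|^\epsilon-|u-t|^\epsilon\bigr|\lesssim |t|^\epsilon\min\bigl(1,\epsilon\log(|t|/|u|)\bigr)$ and carrying the logarithm through the subsequent integrals, which is neither what you sketched nor routine.

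The paper sidesteps this entirely by a different split: it cuts at $|t|=\log n$ rather than $|t|=1$. On $|t|<\log n$ the derivative bound \eqref{blue2} (integrated via Cauchy--Schwarz) gives $O(\epsilon)$ just as in your near range. On $|t|>\log n$ no factor of $\epsilon$ is extracted from the integrand at all; one uses only the crude estimate
\[
\frac{|M_n(\widehat\theta-\widehat x)-M_n(\widehat\theta)|}{M_n(\widehat\theta-\widehat x)}\;\lesssim\;1+|\widehat x|^\epsilon
\]
and integrates $\int_{|\widehat x|>\log n}|\widehat x|^{\epsilon-2}\,d\widehat x\sim(\log n)^{\epsilon-1}$, which is $<\epsilon$ once $n>n_{(\epsilon)}$. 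Since the lemma explicitly allows $n>n_{(\epsilon)}$, this exploitation of the $n$-dependence is both legitimate and much simpler than attempting a bound uniform in $n$.
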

\begin{proof} Notice first that $\Re \widetilde F\to \Re (1-e^{i\theta})^{-\epsilon}$ and $\Re \widetilde F \ast \cal F_n\to \Re (1-e^{i\theta})^{-\epsilon}$ uniformly over $\theta: \pi/2\le|\theta|\le \pi$. Thus, we only need to consider $\theta:|\theta|\le \pi/2$.

Using $|(\Re \widetilde F) \ast \cal F_n-\Re \widetilde F|=|\Re( \widetilde F \ast \cal F_n- \widetilde F)|\le|\widetilde F \ast \cal F_n- \widetilde F|$, we can rewrite
\[
\left|\frac{\Re \widetilde F \ast \cal F_n}{\Re \widetilde F}-1\right|=\left|\frac{\Re \widetilde F \ast \cal F_n-\Re \widetilde F}{\Re \widetilde F}\right|\lesssim \frac{|h_n(\theta)|^2}{\Re h_n(\theta)}\cdot \left|\int_{-\pi}^\pi \frac{\sin^2(nx/2)}{n\sin^2(x/2)}  \Bigl(h_n^{-1}(\theta-x)-h_n^{-1}(\theta) \Bigr)               dx\right|\,.
\]
In the previous Lemma, we showed that $|h_n|\sim \Re h_n$, thus
\[
\left|\frac{\Re \widetilde F \ast \cal F_n}{\Re \widetilde F}-1\right|\lesssim  \left|\int_{-\pi}^\pi \frac{\sin^2(nx/2)}{n\sin^2(x/2)}  \frac{h_n(\theta-x)-h_n(\theta)}{h_n(\theta-x)}               dx\right|\,.
\]
Now, we can substitute \eqref{arom3} and \eqref{blue1} into this formula.
\[
\left|\frac{\Re \widetilde F \ast \cal F_n}{\Re \widetilde F}-1\right|\lesssim
  \int_{-\pi}^\pi \frac{\sin^2(nx/2)}{nx^2} \left| \frac{M_n(n(\theta-x))-M_n(n\theta)}{M_n(n(\theta-x))}       \right|        dx
\]
\[
+\epsilon \int_{-\pi}^\pi \frac{\sin^2(nx/2)}{nx^2} \frac{|M_n(n(\theta-x))|+|M_n(n\theta)|}{M_n(n(\theta-x))}              dx+n^{-1} \int_{-\pi}^\pi\left| \frac{h_n(\theta-x)-h_n(\theta)}{h_n(\theta-x)}    \right|           dx\,.
\]
We apply Lemma \ref{lemma1} to bound the last term by
\[
Cn^{-1} \int_{-\pi}^\pi \frac{1}{|\theta-x|^{\epsilon}}              dx\lesssim n^{-1}\,,
\]
which is smaller than $\epsilon$ if $n>n_{(\epsilon)}$.

Denote $\widehat x:=nx, \widehat \theta:=n\theta$. We need to bound
\[
 \int_{-\pi n}^{\pi n} \frac{\sin^2(\widehat x/2)}{\widehat x^2} \left| \frac{M_n(\widehat \theta-\widehat x)-M_n(\widehat \theta)}{M_n(\widehat \theta-\widehat x)}       \right|        d\widehat x+ \epsilon\int_{-\pi n}^{\pi n} \frac{\sin^2(\widehat x/2)}{\widehat x^2}  \frac{|M_n(\widehat \theta-\widehat x)|+|M_n(\widehat \theta)|}{|M_n(\widehat \theta-\widehat x)|}               d\widehat x\,.
\]
We use the estimate for $M_n(x)\sim 1+|x|^\epsilon$ to estimate the second term by
\begin{equation}\label{blue3}
\epsilon\int_{-\pi n}^{\pi n} \frac{\sin^2(\widehat x/2)}{\widehat x^2} \cdot \frac{1+|\widehat \theta-\widehat x|^\epsilon+|\widehat \theta|^\epsilon}{1+|\widehat \theta-\widehat x|^\epsilon}               d\widehat x\lesssim \epsilon\,,
\end{equation}
because $|\widehat \theta|\le |\widehat \theta-\widehat x|+|\widehat x|$ and so $|\widehat \theta|^\epsilon\le |\widehat x|^\epsilon+|\widehat x-\widehat\theta|^\epsilon$.

Consider the first term now. It is equal to 
\begin{equation}\label{blue4}
 \int_{|\widehat x|<\log n} \frac{\sin^2(\widehat x/2)}{\widehat x^2} \left| \frac{M_n(\widehat \theta-\widehat x)-M_n(\widehat \theta)}{M_n(\widehat \theta-\widehat x)}       \right|        d\widehat x+\int_{\pi n>|\widehat x|>\log n} \frac{\sin^2(\widehat x/2)}{\widehat x^2} \left| \frac{M_n(\widehat \theta-\widehat x)-M_n(\widehat \theta)}{M_n(\widehat \theta-\widehat x)}       \right|        d\widehat x\,.
\end{equation}
If we argue as we did in \eqref{blue3}, the last integral is bounded by
\[
C\int_{|\widehat x|>\log n}\frac{1}{|\widehat x|^{2-\epsilon}}d\widehat x=O(\log^{-1+\epsilon} n)<\epsilon
\]
for $n$ sufficiently large.

For the first integral in \eqref{blue4}, we recall that $\widehat \theta: |\widehat\theta|<\pi n/2$ and use an estimate \eqref{blue2} for the derivative of $M_n$. Keeping in mind $|\widehat x|<\log n$, we obtain
\[
|M_n(\widehat \theta-\widehat x)-M_n(\widehat \theta)|=\left|\int_{\widehat \theta}^{\widehat \theta-\widehat x} M_n'(\xi)d\xi\right|\lesssim
n^{\epsilon-2} |\widehat x|+\left|\int_{\widehat \theta}^{\widehat \theta-\widehat x}\epsilon (1+|\xi|)^{\epsilon-1}d\xi\right|
\lesssim  n^{\epsilon-2}\log n+
\epsilon (1+|\widehat x|^{1/2})
\]
by Cauchy-Schwarz, provided that $\epsilon_0<\frac{4}{10}$. Thus, taking into account the lower bound $M_n\gtrsim 1$, we have an estimate
\[
 \int_{|\widehat x|<\log n} \frac{\sin^2(\widehat x/2)}{\widehat x^2} \left| \frac{M_n(\widehat \theta-\widehat x)-M_n(\widehat \theta)}{M_n(\widehat \theta-\widehat x)}       \right|        d\widehat x\lesssim  n^{\epsilon-2}\log n+\epsilon \int_{|\widehat x|<\log n} \frac{\sin^2(\widehat x/2)}{\widehat x^2} (1+|\widehat x|^{1/2})      d\widehat x\lesssim \epsilon\,,
\]
provided that $\epsilon_0\in (0,\frac{4}{10})$ and $n>n_{(\epsilon)}$.

\end{proof}
Finally, we mention that the choice of $\epsilon_0$ is made to have $1+O(\epsilon)\in (\frac 12,2)$ in, e.g., \eqref{tockta}. We also needed $\epsilon_0<\frac{4}{10}$.

\bigskip

\section{Appendix D: The properties of auxiliary polynomials, II}

In this Appendix, we will study the polynomials introduced in subsection 3.2. Recall that
\begin{equation}\label{grey101}
 H_n= 2(1-e^{i\theta})^{\alpha}\ast \cal{K}_{[n/2]}\,,
\end{equation}
where $\cal{K}_l$ is Jackson kernel and $\alpha\in (\frac 12,1)$. Parameter $\tau$ was chosen as $\tau=1-\alpha$.

 We again start with estimates for $(1-e^{i\theta})^\alpha$. We have
\begin{equation}\label{outside}
\Re (1-e^{i\theta})^\alpha\gtrsim  \tau |(1-e^{i\theta})|^\alpha, \quad    \Re (1-e^{i\theta})^\alpha\gtrsim  \tau |\Im (1-e^{i\theta})^\alpha|,\quad        |
(1-e^{i\theta})^\alpha|\sim |\theta|^{\alpha}\,,
\end{equation}
if $\theta\in [-\pi,\pi]$ and $\alpha\in (\frac 12,1)$. The function $\Im (1-e^{i\theta})^\alpha$ is odd in $\theta$ and the function $\Re (1-e^{i\theta})^\alpha$ is even.
We will also need asymptotical expansion around the origin. We write
\[
(1-e^{i\theta})^\alpha=(2\sin(\theta/2))^{\alpha}(\sin(\theta/2)-i\cos(\theta/2))^\alpha=(2\sin(\theta/2))^{\alpha}e^{-i\alpha \nu}\,,
\]
where
$
\nu:=\arctan \cot (\theta/2)
$. Therefore, we  get
\begin{equation}\label{lomk}
\Re (1-e^{i\theta})^\alpha=|2\sin(\theta/2)|^{\alpha}\cos(\alpha \arctan \cot (\theta/2)),
\end{equation}
\begin{equation}\label{lomk1}
\Im (1-e^{i\theta})^\alpha=-|2\sin(\theta/2)|^{\alpha}\sin(\alpha \arctan \cot (\theta/2)),
\end{equation}
When $\theta\to 0$, 
\[
\cot (\theta/2)=2\theta^{-1}(1+O(\theta^2))\,.
\]
When $t\to\infty$,
\[
\arctan t={\rm sgn} (t)\cdot \Bigl(\pi/2+O(|t|^{-1})\Bigr)\,.
\]
Therefore, expanding $\sin(\theta/2)$ at the origin, we have
\begin{equation}\label{redutto}
\Re (1-e^{i\theta})^\alpha=C|\theta|^\alpha(\cos(\alpha\pi/2)+O(\theta)), \,\Im (1-e^{i\theta})^\alpha=-C {\rm sgn } (\theta)\cdot|\theta|^\alpha(\sin(\alpha\pi/2)+O(\theta))\,.
\end{equation}

 The standard property of the convolution yields 
\begin{equation}\label{nordf}
H_n(\theta)\to 2(1-e^{i\theta})^\alpha, \quad n\to\infty
\end{equation}
uniformly over $\theta\in [-\pi,\pi]$. Therefore, for every $\delta>0$, there is $n_{(\delta)}\in \mathbb{N}$, such that 
\begin{equation}\label{outside1}
1\gtrsim \Re H_n\gtrsim \tau |\theta|^\alpha, \quad |H_n|\sim |\theta|^{\alpha}
\end{equation}
if $n>n_{(\delta)}$ and $|\theta|\in [\delta,\pi]$. This follows from \eqref{outside} and uniform convergence \eqref{nordf}.

\begin{lemma}\label{grey4}

 The function $\Im H_n(\theta)$ is odd in $\theta$.
There exists $\tau_0>0$ such that for every $\tau\in (0,\tau_0)$ there is $n_{(\tau)}\in \mathbb{N}$ so that $H_n$ satisfies the following properties for all $n>n_{(\tau)}$. 
\begin{itemize}
\item For the real part,
\begin{equation}\label{grey2}
  \Re H_n\sim  \tau(n^{-\alpha}+|\theta|^\alpha), \quad{\rm if}\, |\theta|<\tau^2\,.
\end{equation}
\item For the absolute value,
\begin{equation}\label{grey3}
|H_n|\sim 
\left\{
\begin{array}{cc}
n^\tau|\theta|+\tau n^{-\alpha}, & |\theta|<1/n,\\
|\theta|^\alpha,&  |\theta|>1/n\,.
\end{array}
\right.
\end{equation}

\item  For the argument of $H_n$, 
 \begin{equation}\label{orange1}
-\pi/2+C\tau< \arg H_n<\pi/2-C\tau, \,{\rm if}\,  \theta\in [-\pi,\pi)\,.
 \end{equation}
 \end{itemize}
\end{lemma}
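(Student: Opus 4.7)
The plan is to establish the four assertions in order, using the convolution structure $H_n = 2(1-e^{i\theta})^\alpha \ast \mathcal{K}_{[n/2]}$ together with standard Jackson kernel estimates and the expansions \eqref{lomk}--\eqref{redutto}. The parity of $\Im H_n$ is immediate: $\mathcal{K}_{[n/2]}$ is even and $\Im(1-e^{ix})^\alpha$ is odd in $x$, and likewise $\Re H_n$ is even. For the argument bound \eqref{orange1}, the pointwise inequality $\Re(1-e^{is})^\alpha \geq c\tau|(1-e^{is})^\alpha|$ from \eqref{outside} is preserved under convolution with the nonnegative kernel (combined with the triangle inequality on the right-hand side), giving $\Re H_n \gtrsim \tau|H_n|$, hence $\cos\arg H_n \gtrsim \tau$, which gives \eqref{orange1}.

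For the real-part estimate \eqref{grey2}, I would combine the sharp two-sided pointwise bound $\Re(1-e^{is})^\alpha \sim \tau|s|^\alpha$ for small $s$ (extracted from \eqref{redutto} via $\cos(\alpha\pi/2)=\sin(\tau\pi/2)\sim\tau$) with the convolution formula, reducing matters to the computation of $\int |\theta-y|^\alpha \mathcal{K}_{[n/2]}(y)\,dy$. Splitting into $|y|<1/n$ (where $\mathcal{K}_{[n/2]}\sim n$) and $|y|>1/n$ (tail $\lesssim n^{-3}/y^4$) yields $\sim n^{-\alpha}+|\theta|^\alpha$ uniformly for $|\theta|<\tau^2$, the contribution from $|y|\gtrsim 1$ being negligible. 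This produces $\Re H_n \sim \tau(n^{-\alpha}+|\theta|^\alpha)$.

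For the absolute value \eqref{grey3} in the outer range $|\theta|>1/n$, I would invoke Jackson's approximation theorem: since $(1-e^{i\theta})^\alpha$ is $\alpha$-H\"older and the Jackson kernel provides best-approximation rates up to constants, $|H_n(\theta) - 2(1-e^{i\theta})^\alpha| \lesssim n^{-\alpha}$, and as $|2(1-e^{i\theta})^\alpha|\sim|\theta|^\alpha \gtrsim n^{-\alpha}$ in this range, we get $|H_n|\sim|\theta|^\alpha$. For the inner range $|\theta|<1/n$, I would Taylor-expand around $\theta=0$. Parity forces $(\Re H_n)'(0)=0$, so $H_n'(0) = i(\Im H_n)'(0)$, and differentiating under the integral gives $H_n'(0) = -2i\alpha\int e^{-iy}(1-e^{-iy})^{\alpha-1}\mathcal{K}_{[n/2]}(y)\,dy$; the same kernel splitting with $|1-e^{-iy}|^{\alpha-1}\sim|y|^{\alpha-1}$ yields $|H_n'(0)|\sim n^\tau$, and the analogous argument at arbitrary $\theta$ gives the uniform bound $\|H_n'\|_\infty\lesssim n^\tau$. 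Together with $H_n(0)=\Re H_n(0)\sim\tau n^{-\alpha}$ (from \eqref{grey2} at $\theta=0$), one obtains the upper bound $|H_n(\theta)|\leq |H_n(0)|+|\theta|\|H_n'\|_\infty \lesssim \tau n^{-\alpha}+n^\tau|\theta|$.

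The main obstacle is the matching lower bound on $|H_n|$ for $|\theta|<1/n$ in the subregime where $n^\tau|\theta|$ dominates $\tau n^{-\alpha}$. The bound $|H_n|\geq \Re H_n \gtrsim \tau n^{-\alpha}$ is automatic, so one needs $|\Im H_n|\gtrsim n^\tau|\theta|$ on the complementary subregion. This must come from the linearization $\Im H_n(\theta) = (\Im H_n)'(0)\theta + R(\theta)$, where $|(\Im H_n)'(0)|\sim n^\tau$ by the computation above; oddness of $\Im H_n$ forces $R$ to start at cubic order, and Bernstein's inequality applied to $H_n'$ (already known to satisfy $\|H_n'\|_\infty\lesssim n^\tau$) controls the remainder sharply enough to extract the linear term on the relevant range. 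Careful tracking of the $\tau$- and $n$-dependence of all constants is needed to close this final estimate and match the claimed $\sim$-equivalence.
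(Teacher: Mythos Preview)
Your treatment of the parity of $\Im H_n$, the argument bound \eqref{orange1}, and the real-part estimate \eqref{grey2} is correct and matches the paper's proof essentially verbatim.

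For the absolute-value estimate \eqref{grey3} you diverge from the paper, and the divergence does not close. The problem is precisely the one you flag as the ``main obstacle'': the lower bound on $|H_n|$ near $|\theta|\sim 1/n$. Your Bernstein remainder estimate gives $|R(\theta)|\lesssim \|H_n'''\|_\infty\,|\theta|^3\lesssim n^{2+\tau}|\theta|^3$, while the linear term is $\sim n^\tau|\theta|$; at $|\theta|=1/n$ both are of order $n^{-\alpha}$, so the comparison reduces to a race between two absolute constants you have no control over. (The constants are \emph{not} improved by tracking $\tau$- and $n$-dependence---they are already absolute; the issue is their ratio, and Bernstein is sharp at this scale since a direct computation gives $|H_n'''(0)|\sim n^{2+\tau}$.) Similarly, your Jackson-approximation argument for $|\theta|>1/n$ gives the lower bound $|H_n|\gtrsim|\theta|^\alpha$ only once $|\theta|^\alpha$ exceeds the Jackson error $Cn^{-\alpha}$ by a fixed factor, i.e.\ for $|\theta|>C'/n$ with $C'$ possibly large. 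This leaves a window $c_0/n<|\theta|<C'/n$ where you only get $|H_n|\ge\Re H_n\gtrsim\tau n^{-\alpha}$, which is weaker by a factor of $\tau$ than the claimed $|H_n|\sim n^{-\alpha}$.

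The paper avoids the whole difficulty by never Taylor-expanding. It writes $\Im H_n$ (after peeling off lower-order terms) as $Cl^{-\alpha}S(\widehat\theta)$ with
\[
S(\widehat\theta)=\int_{|\widehat x|<l\tau^2}{\rm sgn}(\widehat\theta-\widehat x)\,|\widehat\theta-\widehat x|^\alpha\,\frac{\sin^4(\widehat x/4)}{\widehat x^4}\,d\widehat x\,,
\]
observes $S(0)=0$, and differentiates to get $S'(\widehat\theta)=\alpha\int|\widehat x-\widehat\theta|^{\alpha-1}\frac{\sin^4(\widehat x/4)}{\widehat x^4}\,d\widehat x$, which is an integral of a \emph{positive} function and hence satisfies two-sided bounds $S'\sim(1+|\widehat\theta|)^{\alpha-1}$. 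Integrating $S'$ from $0$ then yields $|S(\widehat\theta)|\sim|\widehat\theta|$ for $|\widehat\theta|<1$ and $|S(\widehat\theta)|\sim|\widehat\theta|^\alpha$ for $|\widehat\theta|>1$, with absolute constants, and this covers the full range seamlessly. The key point is that monotonicity of $S$ gives the lower bound for free, whereas your linearization-plus-remainder strategy must \emph{prove} a cancellation does not occur. Your approach could be rescued by showing $(\Im H_n)'(\theta)$ has constant sign and magnitude $\sim n^\tau$ throughout $|\theta|<1/n$ (not merely at $\theta=0$) and then integrating---but that is exactly the paper's $S'$ computation.
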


\begin{proof}
The fact that $\Im H_n$ is odd is immediate because $\Im (1-e^{i\theta})^\alpha$ is odd.

To see \eqref{orange1}, we notice that the Jackson kernel  and $\Re (1-e^{i\theta})^\alpha$ are both nonnegative, so the second estimate in \eqref{outside} yields
\[
|\Im H_n|\lesssim \tau^{-1}\Re H_n\,.
\]
We then have
\[
|\tan \arg H_n|\lesssim \tau^{-1}\,,
\]
which gives \eqref{orange1}.

Fix $\delta>0$ and consider $|\theta|<\delta$. Denote $l:=[n/2]$ and notice that $l\sim n$.
The estimates on the Jackson kernel imply
\[
\Bigl( (1-z)^{\alpha}\ast
\cal{K}_l\Bigr)(\theta)=\int_{|x|<\delta}
(1-e^{i(\theta-x)})^{\alpha}\cal{K}_l(e^{ix})dx+O(\delta^{-3}n^{-3})\,.
\]
If $|x|<\delta$ and $|\theta|<\delta$, then (check  \eqref{lomk},\eqref{lomk1},\eqref{redutto}) we have
\[
\cos \left(\alpha\arctan \cot ((\theta-x)/2)\right)=\cos
(\alpha\pi/2)+O(\delta),
\]
\[ \quad \sin \left(\alpha\arctan \cot
((\theta-x)/2)\right)={\rm sgn}(\theta-x)\cdot \sin
(\alpha\pi/2)+O(\delta)\,.
\]
Therefore,
\[
\Re \Bigl( (1-z)^{\alpha}\ast \cal{K}_l\Bigr)(\theta)=(\cos
(\alpha\pi/2)+O(\delta))\int_{|x|<\delta}|2\sin((\theta-x)/2)|^{\alpha}\cdot
\cal{K}_l(e^{ix})dx+O(\delta^{-3}n^{-3})
\]
and
\begin{eqnarray}
\Im \Bigl( (1-z)^{\alpha}\ast \cal{K}_l\Bigr)(\theta)= -\sin
(\alpha\pi/2)\int_{|x|<\delta}|2\sin((\theta-x)/2)|^{\alpha}\cdot {\rm
sgn}(\theta-x) \cdot \cal{K}_l(e^{ix})dx+  \nonumber \\
O(\delta)\int_{|x|<\delta}|2\sin((\theta-x)/2)|^{\alpha}\cdot
\cal{K}_l(e^{ix})dx+O(\delta^{-3}n^{-3})\,.\hspace{2cm}\label{kuklya}
\end{eqnarray}
Take $\delta=\tau^2$. Then, we choose $\tau_0$ such that 
\begin{equation}\label{perg1}
\cos (\alpha\pi/2)+O(\delta)\sim \tau\,.
\end{equation}
for all $\tau\in (0,\tau_0)$.
Then,
\[
\int_{|x|<\tau^2}|2\sin((\theta-x)/2)|^{\alpha}\cdot
\cal{K}_l(e^{ix})dx\sim
\int_{|x|<\tau^2}|\theta-x|^{\alpha}\cdot
\frac{\sin^4(lx/4)}{l^3x^4}dx\,.
\]
As in the previous Appendix, we introduce $\widehat x:=lx, \widehat \theta:=l\theta$.
The last integral becomes
\begin{equation}\label{glyur}
l^{-\alpha}\int_{|\widehat x|<l\tau^2}|\widehat \theta-\widehat x|^{\alpha}\cdot
\frac{\sin^4(\widehat x/4)}{\widehat x^4}d\widehat x\sim l^{-\alpha}(1+|\widehat \theta|^\alpha)\,,
\end{equation}
if $|\widehat \theta|<l\tau^2$.  Combining the estimates and recalling that $l\sim n$, we get
\[
\Re \Bigl( (1-e^{i\theta})^{\alpha}\ast \cal{K}_l\Bigr)(\theta)\sim \tau^{-6}n^{-3}+\tau(n^{-\alpha}+|\theta|^\alpha)\,.
\]
This proves \eqref{grey2}. For the imaginary part, \eqref{kuklya} gives
\[
\Im \Bigl( (1-z)^{\alpha}\ast \cal{K}_l\Bigr)(\theta)=-\sin(\alpha \pi/2) J_1+J_2+O(\delta^{-3}n^{-3})\,,
\]
\[
 J_1:=\int_{|x|<\delta}|2\sin((\theta-x)/2)|^{\alpha}\cdot {\rm
sgn}(\theta-x) \cdot \cal{K}_l(x)dx\,,
\]
\[
J_2:=O(\delta)\int_{|x|<\delta}|2\sin((\theta-x)/2)|^{\alpha}\cdot
\cal{K}_l(e^{ix})dx\,.
\]
For $J_2$, 
\begin{equation}\label{posya}
J_2= O(\tau^3)(n^{-\alpha}+|\theta|^\alpha)
\end{equation}
due to \eqref{glyur} and the choice of $\delta$.

Consider $J_1$.   We  write Taylor expansion for $\sin$
\[
\sin(\theta-x)=\theta-x+O((\theta-x)^2)\,.
\]
For the Jackson kernel, we have the asymptotical representation
\[
\cal{K}_l(\theta)=\frac{C}{l^3}\frac{\sin^4(l\theta/2)}{\theta^4}+O(l^{-1})\,.
\]
Substituting these two expressions into the formula for $J_1$, we get
\[
J_1=J_{(1,1)}+\epsilon_{(n,\theta)}\,.
\]
The expression for the main term, $J_{(1,1)}$, is
\[
J_{(1,1)}=Cl^{-3}\int_{|x|<\delta}|\theta-x|^{\alpha}\cdot {\rm
sgn}(\theta-x) \cdot \frac{\sin^4(lx/2)}{x^4}dx\,.
\]
Carrying out the standard bounds, we get
\begin{equation}\label{dusya}
|\epsilon_{(n,\theta)}|\lesssim  n^{-1}
\end{equation}
if $n>n_{(\tau)}$.
In $J_{(1,1)}$, perform the change of variables  $\widehat x=lx, \widehat\theta=l\theta$ to get
\[
J_{(1,1)}=Cl^{-\alpha}S(\widehat \theta), \,S(\widehat\theta):=\int_{|\widehat x|<l\tau^2} {\rm sgn} (\widehat\theta-\widehat x)\cdot  |\widehat \theta-\widehat x|^{\alpha}\cdot
\frac{\sin^4(\widehat x/4)}{\widehat x^4}d\widehat x\,.
\]
Notice that $S(0)=0$ and  
\[
S'(\widehat \theta)=\alpha \int_{|\widehat x|<l\tau^2} |\widehat x-\widehat \theta|^{\alpha-1}\frac{\sin^4(\widehat x/4)}{\widehat x^4}d\widehat x\sim 1+|\widehat\theta|^{\alpha-1}\,.
\]
Since $S(\widehat\theta)=S(0)+\int_0^{\widehat \theta} S'(\xi)d\xi$, we have
\[
|S(\widehat \theta)|\sim
\left\{
\begin{array}{cc}
|\widehat \theta|, &|\widehat \theta|<1\,,\\
|\widehat \theta|^{\alpha}, & |\widehat \theta|>1\,.
\end{array}
\right.
\]
Going back to the original variable, we get
\[
|J_{(1,1)}(\theta)|\sim
\left\{
\begin{array}{cc}
n^\tau|\theta|, &|\theta |<1/n\,,\\
|\theta|^\alpha, & 1/n<|\theta|\,.
\end{array}
\right.
\]
Now, we can combine these bounds to control $|H_n|$. Notice that $|H_n|\sim |\Re H_n|+|\Im H_n|$.
For $|\theta|>1/n$, $J_{(1,1)}$ gives the main contribution and $|H_n|\sim |\theta|^\alpha$. When $\theta\in [-1/n,1/n]$, we have $\Re H_n\sim \tau n^{-\alpha}$. Then,
$|\epsilon_{(n,\theta)}|$ is negligible compared to $\Re H_n$, as can be seen from \eqref{dusya}. Thus,
\[
|H_n|\sim n^\tau|\theta|+\tau n^{-\alpha}\,.
\]
Note carefully that  $n^\tau|\theta|\sim \tau n^{-\alpha}$ when $|\theta|\sim \tau/n$. 

Thus, we have \eqref{grey3} for $|\theta|<\tau^2$. To prove \eqref{grey3} for all $\theta$, it is sufficient to notice that outside the interval $[-\tau^2,\tau^2]$, \eqref{nordf} gives \eqref{grey3} immediately due to the properties of the limiting function $(1-e^{i\theta})^\alpha$.

In conclusion, we want to mention that $\tau_0$ has been chosen ``small enough'' to make the asymptotical calculation \eqref{perg1} valid.
\end{proof}

Consider $Q_n$ given by \eqref{grey6}, i.e.,
\begin{equation}\label{grey7}
Q_n=(1-e^{i\theta})^{-\alpha/2}\ast \cal{F}_n\,.
\end{equation}
The following Lemma can be proved in the standard way.
\begin{lemma}\label{blue77}
If $\alpha\in (1/2,1)$, then there is $n_0\in \mathbb{N}$ so that 
\[
\Re Q_n\sim \left\{
\begin{array}{cc}
n^{\alpha/2}, &\, |\theta|<n^{-1},\\
|\theta|^{-\alpha/2}, &\, n^{-1}<|\theta|
\end{array}
\right.
\]
and
\[
|Q_n|\sim \left\{
\begin{array}{cc}
n^{\alpha/2},& \, |\theta|<n^{-1},\\
|\theta|^{-\alpha/2}, &\, n^{-1}<|\theta|
\end{array}
\right.
\]
for all $n>n_0$.
\end{lemma}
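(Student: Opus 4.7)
The plan is to reduce everything to a single real convolution estimate, exploiting the fact that the argument of $(1-e^{i\theta})^{-\alpha/2}$ is bounded strictly inside $(-\pi/2,\pi/2)$ so that the imaginary part is dominated by the real part both pointwise and after convolution against the nonnegative Fejer kernel.

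First, I would establish the pointwise behavior of the limiting function. From the identity $(1-e^{i\theta})^{-\alpha/2}=|2\sin(\theta/2)|^{-\alpha/2}\exp\bigl(i(\alpha/2)\arctan\cot(\theta/2)\bigr)$, analogous to \eqref{lomk}--\eqref{lomk1}, the argument lies in $[-\alpha\pi/4,\alpha\pi/4]\subset(-\pi/2,\pi/2)$, so $\cos(\arg(1-e^{i\theta})^{-\alpha/2})\ge \cos(\alpha\pi/4)>0$ uniformly. Combined with $|2\sin(\theta/2)|^{-\alpha/2}\sim |\theta|^{-\alpha/2}$ on $[-\pi,\pi]$, this gives
\[
\Re (1-e^{i\theta})^{-\alpha/2}\sim |\theta|^{-\alpha/2},\qquad |\Im (1-e^{i\theta})^{-\alpha/2}|\lesssim |\theta|^{-\alpha/2}.
\]
Since $\cal{F}_n\ge 0$, convolving against $\cal{F}_n$ preserves the real part and the inequality $|\Im|\lesssim \Re$, so $\Re Q_n\ge 0$, $|\Im Q_n|\lesssim \Re Q_n$, and hence $|Q_n|\sim \Re Q_n$. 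Consequently it is enough to prove the two-sided bound for $\Re Q_n$; the assertion for $|Q_n|$ will follow automatically.

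Next, I would insert the asymptotic form of the Fejer kernel $\cal{F}_n(x)=\frac{2}{\pi n}\bigl(\sin^2(nx/2)/x^2+O(1)\bigr)$ (as in \eqref{arom3}) and the expansion of $\Re(1-e^{i\theta})^{-\alpha/2}=C|\theta|^{-\alpha/2}(\cos(\alpha\pi/4)+O(\theta))$ to reduce, modulo controlled error terms, to the main integral
\[
\Re Q_n(\theta)\sim \frac{1}{n}\int_{-\pi}^\pi |x|^{-\alpha/2}\,\frac{\sin^2(n(\theta-x)/2)}{(\theta-x)^2}\,dx.
\]
Rescaling by $\widehat x:=nx$, $\widehat\theta:=n\theta$ turns this into $n^{\alpha/2}\widetilde M_n(\widehat\theta)$ with
\[
\widetilde M_n(\widehat\theta):=\int_{-\pi n}^{\pi n}|t|^{-\alpha/2}\,\frac{\sin^2((\widehat\theta-t)/2)}{(\widehat\theta-t)^2}\,dt.
\]
The residual error terms (from $O(1)$ in the Fejer expansion and $O(\theta)$ in the expansion of $(1-e^{i\theta})^{-\alpha/2}$) can be controlled exactly as in \eqref{esr}, using $\|\cal{F}_n\|_1=1$ and $\int \cal{F}_n|x|^{1-\alpha/2}dx\lesssim n^{-1+\alpha/2}$; they are lower-order compared to the main term for $n$ large.

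Finally I would show $\widetilde M_n(\widehat\theta)\sim 1+|\widehat\theta|^{-\alpha/2}$ uniformly for $|\widehat\theta|\le \pi n$, by the same splitting used around \eqref{proshloe} in Appendix~C. For $|\widehat\theta|\le 1$ the integrand is of order one on a fixed interval and integrable near $t=0$ since $\alpha/2<1$, giving $\widetilde M_n\sim 1$. For $|\widehat\theta|\ge 1$ split at $|t-\widehat\theta|=|\widehat\theta|/2$: the near piece contributes $\sim |\widehat\theta|^{-\alpha/2}$ using $|t|\sim|\widehat\theta|$ and $\int_{|s|<|\widehat\theta|/2}\sin^2(s/2)/s^2\,ds\sim 1$, while the far piece contributes $\lesssim \int |t|^{-\alpha/2}/((t-\widehat\theta)^2+1)dt\lesssim |\widehat\theta|^{-\alpha/2}+\text{lower order}$. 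Translating back via $\widehat\theta=n\theta$ yields $\Re Q_n\sim n^{\alpha/2}$ for $|\theta|\le 1/n$ and $\Re Q_n\sim n^{\alpha/2}(n|\theta|)^{-\alpha/2}=|\theta|^{-\alpha/2}$ for $|\theta|>1/n$, which is exactly the claim.

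The only real technical nuisance is bookkeeping the error terms to make sure they stay below the main contribution in both regimes simultaneously; this is precisely the same bookkeeping executed in the proof of Lemma~\ref{lemma1} in Appendix~C, so no new ideas are required.
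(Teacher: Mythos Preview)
Your approach is essentially identical to the paper's: reduce to $\Re Q_n$ via the argument bound, substitute the Fejer asymptotic, rescale, and estimate the resulting integral by splitting into near and far pieces. The only slip is the displayed formula $\widetilde M_n(\widehat\theta)\sim 1+|\widehat\theta|^{-\alpha/2}$, which is the wrong expression (it stays $\sim 1$ for large $|\widehat\theta|$ and blows up as $\widehat\theta\to 0$); your own case analysis in the next two sentences correctly gives $\widetilde M_n\sim 1$ for $|\widehat\theta|\le 1$ and $\widetilde M_n\sim |\widehat\theta|^{-\alpha/2}$ for $|\widehat\theta|\ge 1$, i.e.\ $\widetilde M_n\sim (1+|\widehat\theta|)^{-\alpha/2}$, matching the paper's $n^{\gamma}/(1+|\widehat\theta|^{\gamma})$.
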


\begin{proof} Let $\gamma:=\alpha/2$. We have $\gamma\in (\frac 14,\frac 12)$.
The proof is similar to that of Lemma 1, except it is easier since we are not concerned with how the estimates depend on small parameter. We start by writing
\begin{equation}\label{arom5}
\Re (1-e^{i\theta})^{-\gamma}\sim |\theta|^{-\gamma}, \, |\Im (1-e^{i\theta})^{-\gamma}|\lesssim |\theta|^{-\gamma}\,.
\end{equation}
The last inequality implies
\[
|\Im Q_n|\lesssim \Re Q_n\,.
\]
Therefore, we only need to focus on $\Re Q_n$. Substitute the formula \eqref{arom3} into the convolution and use \eqref{arom5} to get
\[
\Re Q_n(\theta)\sim n^{-1}\int_{-\pi}^\pi |\theta-x|^{-\gamma}  \frac{\sin^2(nx)}{x^2}dx+O(n^{-1})\,.
\]
Consider the integral and change variables $\widehat \theta:=n\theta, \widehat x:=nx$. This gives
\[
n^\gamma \int_{-\pi n}^{\pi n} {|\widehat x-\widehat \theta|^{-\gamma}}\frac{\sin^2(\widehat x)}{\widehat x^2}d\widehat x\sim \frac{n^\gamma}{1+|\widehat \theta|^\gamma}\sim
\left\{
\begin{array}{cc}
n^\gamma,& \quad |\theta|<1/n,\\
|\theta|^{-\gamma}, & |\theta|>1/n
\end{array}\,.
\right.
\]
Comparing the last quantity to $O(n^{-1})$, we  finish the proof.
\end{proof}

{\Large \part*{Acknowledgement.}} The work done in the last section
 of the paper was supported by RSF-14-21-00025 and author's research
on the rest of the paper was supported by the grant NSF-DMS-1464479.
\bigskip

\end{document}